\providecommand{\tabularnewline}{\\}
\numberwithin{equation}{section}
\numberwithin{figure}{section}
\theoremstyle{plain}
\newtheorem{thm}{\protect\theoremname}[section]
\theoremstyle{definition}
\newtheorem{defn}[thm]{\protect\definitionname}
\theoremstyle{definition}
\newtheorem{example}[thm]{\protect\examplename}
\theoremstyle{plain}
\newtheorem{lem}[thm]{\protect\lemmaname}
\theoremstyle{plain}
\newtheorem{prop}[thm]{\protect\propositionname}
\theoremstyle{plain}
\newtheorem{cor}[thm]{\protect\corollaryname}
\theoremstyle{remark}
\newtheorem{rem}[thm]{\protect\remarkname}
\providecommand{\corollaryname}{Corollary}
\providecommand{\definitionname}{Definition}
\providecommand{\examplename}{Example}
\providecommand{\lemmaname}{Lemma}
\providecommand{\propositionname}{Proposition}
\providecommand{\remarkname}{Remark}
\providecommand{\theoremname}{Theorem}
\begin{document}
\selectlanguage{british}
\title{Lie algebras graded by the weight system $(\Theta_{n},\mathit{sl}_{n})$ }
\author{Alexander Baranov}
\address{Department of Mathematics, University of Leicester, Leicester, UK}
\email{baranov@le.ac.uk}
\thanks{Supported by University of Leicester.}
\author{Hogir M. Yaseen}
\address{Department of Mathematics, Salahaddin University-Erbil, Iraq.}
\email{hogr.yaseen@su.edu.krd}
\thanks{Supported by the Ministry of Higher Education and Scientific Research
in Kurdistan Regional Government-Iraq.}
\keywords{root systems, weight systems, Lie algebras graded by weight systems,
central extensions.}
\subjclass[2000]{17B60, 17B65, 17B70, 17B20}
\begin{abstract}
A Lie algebra $L$ is said to be $(\Theta_{n},\mathit{sl}_{n})$-graded
if it contains a simple subalgebra $\mathfrak{g}$ isomorphic to $sl_{n}$
such that the $\mathfrak{g}$-module $L$ decomposes into copies of
the adjoint module, the trivial module, the natural module $V$, its
symmetric and exterior squares $S^{2}V$ and $\wedge^{2}V$ and their
duals. We describe the multiplicative structures and the coordinate
algebras of $(\Theta_{n},\mathit{sl}_{n})$-graded Lie algebras for
$n\ge5$, classify these Lie algebras and determine their central
extensions. 
\end{abstract}

\maketitle
\global\long\def\bbF{\mathbb{F}}%

\global\long\def\ffg{\mathfrak{g}}%

\global\long\def\ffa{\mathfrak{a}}%

\global\long\def\ffb{\mathfrak{b}}%

\global\long\def\Ker{\operatorname{\rm Ker}}%

\global\long\def\ad{\operatorname{\rm ad}}%

\global\long\def\Der{\operatorname{\rm Der}}%

\global\long\def\Rad{\operatorname{\rm Rad}}%

\global\long\def\tra{\operatorname{\rm tr}}%

\global\long\def\hfb{\operatorname{\rm HF}}%

\global\long\def\Hom{\operatorname{\rm Hom}}%

\global\long\def\End{\operatorname{\rm End}}%

\global\long\def\hom{\operatorname{\rm Hom_{\mathfrak{g}}}}%

\global\long\def\dim{\operatorname{\rm dim}}%

\global\long\def\ki{\operatorname{\rm B}}%

\allowdisplaybreaks

\section{Introduction}

Root graded Lie algebras were introduced by Berman and Moody in 1992
to study toroidal Lie algebras and Slodowy intersection matrix algebras.
They classified the Lie algebras graded by simply-laced root systems
up to central isogeny \cite{berman1992lie}. The case of double-laced
root systems was settled by Benkart and Zelmanov \cite{benkart1996lie}.
Central extensions of root graded Lie algebras in terms of the homology
of its coordinate algebra were determined and described up to isomorphism
by Allison, Benkart and Gao in \cite{allison2000central}.  Non-reduced
systems $BC_{n}$ were considered by Allison, Benkart and Gao \cite{allison2002lie}
(for $n\text{\ensuremath{\ge}}2$) and by Benkart and Smirnov \cite{benkart2003lie}
(for $n=1$). It became clear at that time that this notion can be
generalized further by considering Lie algebras graded by finite weight
systems.

Throughout the paper, the ground field $\bbF$ is of characteristic
zero, $\mathfrak{g}$ is a non-zero split finite dimensional semisimple
Lie algebra over $\bbF$ with root system $\Delta$ and $\Gamma$
is a finite set of integral weights of $\mathfrak{g}$. We say that
a Lie algebra $L$ over $\bbF$ is $(\Gamma,\mathfrak{g})$-\emph{graded},
or simply \emph{$\Gamma$-graded}, if $L$ contains a subalgebra isomorphic
to $\mathfrak{g}$, the $\mathfrak{g}$-module $L$ is the direct
sum of its weight subspaces $L_{\alpha}$ ($\alpha\in\Gamma$) and
$L$ is generated by all $L_{\alpha}$ with $\alpha\ne0$ as a Lie
algebra (see also Definition \ref{def of gamma}). Unless otherwise
stated, we assume that $\mathfrak{g}$ is the grading subalgebra of
the $(\Gamma,\mathfrak{g})$-graded $L$. If $\mathscr{\mathfrak{g}}$
is simple and $\Gamma=\Delta\cup\{0\}$ then $L$ is said to be \emph{root-graded}.
If $\Gamma=BC_{n}\cup\{0\}$ and $\mathfrak{g}$ is of type $B_{n}$,
$C_{n}$ or $D_{n}$, then $L$ is\emph{ $BC_{n}$-graded}. Let $\mathfrak{g}\cong sl_{n}$
and $\Theta_{n}=\{0,\pm\varepsilon_{i}\pm\varepsilon_{j},\pm\varepsilon_{i},\pm2\varepsilon_{i}\mid1\leq i\neq j\leq n\}$
where $\{\varepsilon_{1},\dots,\varepsilon_{n}\}$ is the set of weights
of the natural $sl_{n}$-module. The aim of this paper is to describe
the multiplicative structures and the coordinate algebras of $(\Theta_{n},sl_{n})$-graded
Lie algebras, classify these Lie algebras and determine their central
extensions. 

Various generalizations of root graded Lie algebras were considered.
Neher switched from fields of characteristic zero to rings containing
$1/6$ and working with locally finite root systems instead of finite
\cite{neher1996lie}. Elduque \cite{elduque2013fine} and Draper and
Elduque \cite{draper2016overview} related root gradings with fine
grading. Root graded Lie superalgebras were studied in \cite{benkart2002lie,benkart2003lieA,benkart2003lieB,benkart2004n,martinez2003lie,yousofzadeh2015root}.
  Apart from the $BC_{n}$-graded Lie algebras, other classes of
$\Gamma$-graded Lie algebras with $\Gamma\neq\Delta$ appeared in
the literature. Certain weight-graded Lie algebras were considered
by Neeb \cite{neeb2002locally} (with $\Gamma\setminus\{0\}$ a finite
irreducible root system and $\Delta$ a sub-root system of $\Gamma\setminus\{0\}$)
in a topological setting of locally convex Lie algebras to study some
classes of Lie algebras arising in mathematical physics, operator
theory, and geometry. Let $\mathfrak{g}=sl_{n}$ and $\Gamma_{V}=\Delta\cup V\cup\{0\}$
where $\Delta=A_{n-1}$ and $V$ is the set of weights of the natural
and conatural (the dual of natural) $\mathfrak{g}$-modules. Bahturin
and Benkart \cite{bahturin2004some} (for $n>3$) and Benkart and
Elduque \cite{benkart2012lie} (for $n=3$) described the multiplicative
structure of the $(\Gamma_{V},\mathfrak{g})$-graded Lie algebras.
Note that a Lie algebra is $(\Gamma_{V},\mathfrak{g})$-graded if
and only if  it decomposes as a $\mathfrak{g}$-module into (possibly
infinitely many) copies of the adjoint, natural, conatural and trivial
modules. We believe that the set $\Gamma_{V}$ should be enlarged
further by adding the weights of the symmetric and exterior squares
of the natural and conatural modules, which brings it to $\Theta_{n}$.
Note that a Lie algebra $L$ is $(\Theta_{n},\mathfrak{g})$-graded
if and only if $L$ is generated by $\mathfrak{g}$ as an ideal and
the $\mathfrak{g}$-module $L$ decomposes into copies of the adjoint
module (we will denote it by the same letter $\mathfrak{g}$), the
natural module $V$, its symmetric and exterior squares $S:=S^{2}V$
and $\Lambda:=\wedge^{2}V$, their duals $V',S',\Lambda'$ and the
one dimensional trivial $\mathfrak{g}$-module $T$. Thus, by collecting
isotypic components, we get the following decomposition of the $\mathfrak{g}$-module
$L$:
\begin{equation}
L=(\mathfrak{g}\otimes A)\oplus(V\otimes B)\oplus(V'\otimes B')\oplus(S\otimes C)\oplus(S'\otimes C')\oplus(\Lambda\otimes E)\oplus(\Lambda'\otimes E')\oplus D\label{eq1}
\end{equation}
where $A,B,B',C,C',E,E'$ are vector spaces and $D$ is the sum of
the trivial $\mathfrak{g}$-modules. 

The $\Theta_{n}$-graded Lie algebras did appear in the literature
previously in various contexts. Finite dimensional $\Theta_{n}$-graded
Lie algebras and their representations were studied in \cite{baranov2001plain,baranov2001quasiclassical}.
It was also proved in \cite[4.3]{bahturin2004simple} that a simple
locally finite Lie algebra is $\Theta_{n}$-graded if and only if
it is of diagonal type.

Denote $\mathfrak{a}:=A^{+}\oplus A^{-}\oplus C\oplus E\oplus C'\oplus E'$
and $\mathfrak{b}:=\mathfrak{a}\oplus B\oplus B'$ where $A^{-}$
and $A^{+}$ are two copies of the vector space $A$. We show that
the product in $L$ induces algebra structures on both $\mathfrak{a}$
and $\mathfrak{b}$ (the latter being called the \emph{coordinate
algebra} of $L$). Moreover, $\mathfrak{a}$ is associative if $n\ge7$
or $n=5,6$ and the following conditions on multiplication in $L$
hold:
\begin{align}
 & [\Lambda\otimes E,\Lambda\otimes E]=[\Lambda'\otimes E',\Lambda'\otimes E']=0\text{ for }n=6;\label{eq:MainAssumptions}\\
 & [\Lambda\otimes E,(\Lambda\otimes E)\oplus(V\otimes B)]=[\Lambda'\otimes E',(\Lambda'\otimes E')\oplus(V'\otimes B')]=0\text{ for }n=5.\nonumber 
\end{align}
Note that the conditions (\ref{eq:MainAssumptions}) automatically
hold for $n\ge7$ (see Table \ref{t1}) and are required only because
of irregularities in the tensor product decompositions of the specified
modules for small ranks. We do not consider the case of $n\le4$ in
this paper because of additional technicalities (e.g. $\Lambda\cong\Lambda'$
for $A_{3}$ and $\Lambda\cong V'$ and $\Lambda'\cong V$ for $A_{2}$). 

Our main goal of classification of $\Theta_{n}$-graded Lie algebras
$L$ is achieved in the following steps. 
\begin{enumerate}
\item The computation of all spaces $\hom(X\otimes Y,Z)$ where $X,Y,Z\in\{\mathfrak{g},V,V',S,\Lambda,S',\Lambda',T\}$,
see (\ref{t2}).
\item The determination of the system of products on the components of (\ref{eq1})
induced by multiplication in $L$, see (\ref{main for}).
\item Description of the coordinate algebra $\mathfrak{b}$ of $L$ (Theorem
\ref{structure}).
\item For a given algebra $\mathfrak{b}$, we construct an explicit model
of a $\Theta_{n}$-graded Lie algebra $\mathfrak{u}=\mathfrak{u}(\mathfrak{b})$
with coordinate algebra $\mathfrak{b}$ (Example \ref{exa:Model x}). 
\item We define a centerless algebra $\mathcal{L}(\mathfrak{b})$ with coordinate
algebra $\mathfrak{b}$ (as $\mathcal{L}(\mathfrak{b})=\mathfrak{u}/Z(\mathfrak{u})$)
and we show that every $\Theta_{n}$-graded Lie algebra $L$ with
coordinate algebra $\mathfrak{b}$ is a cover of $\mathcal{L}(\mathfrak{b})$,
i.e. $L/Z(L)\cong\mathcal{L}(\mathfrak{b})$ (Theorem \ref{L(b) center-1}).
\item We prove that $L$ is centrally isogenous to $\mathfrak{u}(\mathfrak{b})$
(i.e. $L/Z(L)\cong\mathfrak{u}(\mathfrak{b})/Z(\mathfrak{u}(\mathfrak{b}))$.
In particular, $L$ is uniquely determined (up to central isogeny)
by its coordinate algebra $\mathfrak{\mathfrak{b}}$ (Theorem \ref{model theorem}).
This completes the classification up to central extensions. 
\item We find the universal central extension $\widehat{\mathcal{L}(\mathfrak{b})}$
of $\mathcal{L}(\mathfrak{b})$ and show that its center is $\hfb(\mathfrak{b})$,
the full skew-dihedral homology group of $\mathfrak{b}$ (Theorem
\ref{universal covering}). We prove that every $\Theta_{n}$-graded
Lie algebra with coordinate algebra $\mathfrak{b}$ is isomorphic
to $\mathcal{L}(\mathfrak{b},X)=\widehat{\mathcal{L}(\mathfrak{b})}/X$
for some subspace $X$ of $\hfb(\mathfrak{b})$, which classifies
the $\Theta_{n}$-graded Lie algebras up to isomorphisms (Theorem
\ref{main universal}).
\end{enumerate}
The paper is organized as follows. In Section \ref{chap:3}  we review
main concepts and results of the theory of Lie algebras graded by
finite root systems and establish general properties of $\Gamma$-graded
Lie algebras. In Section \ref{sec:Multiplication-in--graded} we describe
the multiplicative structures of $\Theta_{n}$-graded Lie algebras.
The coordinate algebra of a $\Theta_{n}$-graded Lie algebra and its
properties are analyzed in Section \ref{chap:4}. In Section \ref{chap:5}
we classify $\Theta_{n}$-graded Lie algebras up to central extensions
and isomorphisms. 

\section{\label{chap:3} $\Gamma$-graded Lie algebras}

This section is organized as follows. First we review main concepts
and results of the theory of Lie algebras graded by finite root systems
and non-reduced systems $BC_{n}$ ($n\geq2$). Then we study general
properties of $\Gamma$-graded Lie algebras. After that we discuss
the similarities between the $\Theta_{n}$-graded and $BC_{n}$-graded
Lie algebras by showing that every $\Theta_{n}$-graded Lie algebra
is $BC_{r}$-graded with $r=\lfloor\frac{n}{2}\rfloor$ and every
$BC_{n}$-graded Lie algebra is $\Theta_{n}$-graded, see Theorems
\ref{bc} and \ref{bc-converse}. We show that our approach gives
a ``finer'' multiplicative and coordinate algebra structure on $L$
as we have more components in the decomposition of $L$ (see Remark
\ref{rem:finer}).

\subsection{Main definition and examples}

We start with the general definition of Lie algebras graded by finite
weight systems \cite{bahturin2004simple}. 
\begin{defn}
\label{def of gamma} Let $\Delta$ be a root system and let $\Gamma$
be a finite set of integral weights of $\Delta$ containing $\Delta$
and $\{0\}$. A Lie algebra $L$ is called $(\Gamma,\mathfrak{g})$-\emph{graded
}(or simply\emph{ $\Gamma$-graded}) if

$(\Gamma1)$ $L$ contains a non-zero finite-dimensional split semisimple
Lie subalgebra $\mathfrak{g}=\mathfrak{h}\oplus\bigoplus_{\alpha\in\Delta}\mathit{\mathrm{\mathfrak{g}}}_{\alpha}$
whose root system is $\Delta$ relative to a split Cartan subalgebra
$\mathfrak{h}=\mathfrak{g}_{0}$;

$(\Gamma2)$ $L=\bigoplus_{\alpha\in\Gamma}L_{\alpha}$ where $L_{\alpha}=\left\{ x\in\mathrm{\mathit{L}\mid\left[\mathit{h,x}\right]=\mathit{\alpha\left(h\right)x\text{ for all }h\in\mathrm{\mathit{\mathfrak{h}}}}}\right\} $;

$(\Gamma3)$ $L_{0}=\sum_{\alpha,-\alpha\in\Gamma\setminus\{0\}}\left[L_{\alpha},L_{-\alpha}\right]$.
\end{defn}

The subalgebra $\mathfrak{g}$ is called the \emph{grading subalgebra}
of $L$. A Lie algebra $L$ is called $(\Gamma,\mathfrak{g})$-\emph{pregraded}
if it satisfies $(\Gamma1)$ and $(\Gamma2)$ (but not necessarily
$(\Gamma3)$). Note that the condition $(\Gamma2)$ yields $[L_{\mu},L_{\nu}]\subseteq L_{\mu+\nu}$
if $\mu+\nu\in\Gamma$ and $[L_{\mu},L_{\nu}]=0$ otherwise. Note
that a $(\Gamma,\mathfrak{g})$-pregraded Lie algebra $L$ is $(\Gamma,\mathfrak{g})$-graded
if and only if the ideal generated by $\mathscr{\mathfrak{g}}$ coincides
with $L$, see Proposition \ref{pre-4 equivalent}.

Suppose that the grading subalgebra $\mathscr{\mathfrak{g}}$ is simple.
If $\Gamma=\Delta\cup\{0\}$ then $L$ is said to be \emph{root-graded}.
If $\Gamma=BC_{n}\cup\{0\}$ and $\mathfrak{g}$ is of type $B_{n}$,
$C_{n}$ or $D_{n}$, then $L$ is said to be $BC_{n}$-\emph{graded}.
If $\Gamma=\Theta_{n}$ and $\mathfrak{g}$ is of type $A_{n-1}$
then $L$ is said to be $\Theta_{n}$-\emph{graded}. Clearly, any
Lie algebra which is graded by a finite root system of type $B_{r},C_{r},$
or $D_{r}$ is also $BC_{r}$-graded. Moreover, note that every $BC_{n}$-graded
Lie algebra ($n\ge2$) is $\Theta_{n}$-graded, see Theorem \ref{bc-converse}.

\begin{example}
Let $A$ be an associative commutative $\mathbb{F}$-algebra with
unit $1$ and let $\mathfrak{g}$ be a split simple Lie algebra of
type $\Delta$. Then $L=\mathfrak{g}\otimes A$ is a $(\Delta,\mathfrak{g}\otimes1)$-graded
Lie algebra with respect to the bracket $[x\varotimes a,y\varotimes b]=[x,y]\varotimes ab$
for all $x,y\in\mathfrak{g}$ and $a,b\in A.$ More generally, any
perfect central extension of $\mathfrak{g}\otimes A$ is also $(\Delta,\mathfrak{g}\otimes1)$-graded.
The universal covering algebra of $\mathfrak{g}\otimes A$ is a generalization
of the affine Kac-Moody algebra determined by $\mathfrak{g}$ \cite[0.5]{benkart1996lie}
.
\end{example}

\begin{example}
\cite{allison2002lie} \label{example ka}(1) Affine Lie algebras
(or more precisely their derived algebras) which have realization
as $\mathfrak{g}^{aff}=(\mathfrak{g}\otimes\mathbb{F}[t^{\pm1}])\oplus\mathbb{F}z$
where $\mathbb{F}[t^{\pm}]$ is the algebra of Laurent polynomials
in $t$ over $\mathbb{F}$ and $\mathbb{F}z$ is a one dimensional
(non split) center, are $\Delta$-graded.

(2) Toroidal Lie algebras, which can be realized as $\mathfrak{g}^{aff}=(\mathfrak{g}\otimes\mathbb{F}[t_{1}^{\pm1},\cdots,t_{n}^{\pm1}])\oplus Z$
where $Z$ is an infinite dimensional non-split center, are $\Delta$-graded.

(3) The twisted affine algebras $(\mathfrak{g}\otimes F[t^{\pm2}])\oplus(W\otimes tF[t^{\pm2}])\oplus Fz$
with $\Delta=B_{r},C_{r},F_{4}$ and their toroidal counterparts are
graded by the root system of $\mathfrak{g}$ ($W$ is the irreducible
$\mathfrak{g}$-module whose highest weight is the highest short root).

(4) The Tits-Kantor-Koecher Lie algebra $K(A)=(sl_{2}\otimes A)\oplus[L_{A},L_{A}]$
of a unital Jordan algebra $A$ where $L_{A}$ denotes left multiplication
by $a\in A$, is graded by $\Delta=A_{1}$.
\end{example}

\begin{example}
Let $L=\mathfrak{g}_{1}\oplus\mathfrak{g}_{2}$ where $\mathfrak{g}_{1}$
and $\mathfrak{g}_{2}$ are ideals of $L$ isomorphic to $sl_{n}$
and let $\mathfrak{g}$ be the diagonal subalgebra of $L$ isomorphic
to $sl_{n}$. Then $L$ is $(A_{n-1},\mathfrak{g})$-graded. Note
that $L$ is also $(A_{n-1},\mathfrak{g}_{i})$-\emph{pregraded},
but not $(A_{n-1},\mathfrak{g}_{i})$-graded as it fails to satisfy
condition $(\Gamma3)$ in the definition. 
\end{example}

\begin{example}
Let $L=sl_{n+k}$ and let $\mathfrak{g}$ be the copy of $sl_{n}$
in the northwest corner. We consider the adjoint action of $\mathfrak{g}$
on $L$. Then the $\mathfrak{g}$-module $L$ decomposes into $k$
copies of the natural module $V=\mathbb{F}^{n}$, $k$ copies of the
dual module $V'=\Hom(V,\mathbb{F})$, an adjoint module $\mathfrak{g}$
and one dimensional trivial $\mathfrak{g}$-modules in its southeast
corner. Then $L=\mathfrak{g}\oplus V^{\oplus k}\oplus V'^{\oplus k}\oplus D$
where $D$ is the sum of the trivial $sl_{n}$-modules. As a result,
we may write 
\[
L=\mathfrak{g}\oplus(V\otimes B)\oplus(V'\otimes B')\oplus D
\]
where $B\cong B'\cong\mathbb{F}^{k}$. Then $L$ is $(\Theta_{n},\mathfrak{g})$-graded.
Bahturin and Benkart \cite{bahturin2004some} (for $n>3$) and Benkart
and Elduque \cite{benkart2012lie} (for $n=3$) described the multiplicative
structure of this type of Lie algebras. Note that $L$ is also $(A_{n+k-1},L)$-graded.
This shows that Lie algebras can be weight graded in different ways. 
\end{example}

\begin{example}
Let $L=sl_{2n+1}$ and $\mathfrak{g}=\left\{ \left[\begin{array}{ccc}
x & 0 & 0\\
0 & -x^{t} & 0\\
0 & 0 & 0
\end{array}\right]\mid x\in sl_{n}\right\} \subset L.$ We consider the adjoint action of $\mathfrak{g}$ on $L$. Then $L$
is $(\Theta_{n},\mathfrak{g})$-graded. Moreover, one can check that
all the componets in the decomposition (\ref{eq1}) for this algebra
are non-zero (see \cite[Example 3.3.3]{yaseen2018generalized}).
\end{example}

\begin{example}
Let $L=\mathfrak{g}\oplus R$ where $R=\Rad L$ and $\mathfrak{g}$
is a simple subalgebra of $L$ isomorphic to $sl_{n}$. Suppose $[R,R]=0$
and $R$ is a finite dimensional simple $\mathfrak{g}$-module with
highest weight $\lambda$ under the adjoint action of $\mathfrak{g}$.
Then $L$ is $(\Theta_{n},\mathfrak{g})$-graded if and only if $\lambda\in\Theta_{n}$.
\end{example}

\subsection{Multiplicative structure of the root graded and $BC_{r}$-graded
Lie algebras}

In this subsection we briefly recall multiplicative structures and
coordinate algebras of the root graded and $BC_{r}$-graded Lie algebras
$L$. Let $\mathfrak{g}$ be the grading subalgebra of $L$ and let
$\Delta$ be its root system. Then $\Gamma=\Delta\cup\{0\}$ or $BC_{r}\cup\{0\}$.
The multiplicative structure and the coordinate algebra of $L$ is
obtained as follows.
\begin{itemize}
\item[(1)]  $\Gamma=\Delta\cup\{0\}$ and $\Delta=A_{n-1}$ with $n\ge3$ (\cite{berman1992lie}
and \cite[4.14]{allison2000central}). Note that the Lie algebra $L$
in this case is also $\Theta_{n}$-graded, so $L\cong(\mathfrak{g}\varotimes A)\oplus D$
with the same multiplication as in (\ref{main for}) with $B=B'=C=C'=E=E=\{0\}$.
Here $A$ is an associative (if $n\ge4$) or alternative (if $n=3$)
algebra over $\mathbb{F}$ and $D$ is the sum of trivial $\mathfrak{g}$-modules
(acting by derivations on $A$).
\item[(2)]  $\Gamma=\Delta\cup\{0\}$ and $\Delta=E_{r}$ ($r=6,7,8$) or $\Delta=A_{1}$
(\cite{berman1992lie} and \cite[2.34]{allison2000central}). Then
there is a commutative associative algebra $A$ (or Jordan algebra
$A$ if $\Delta=A_{1}$) over $\mathbb{F}$ such that $L\cong(\mathfrak{g}\varotimes A)\oplus D,$
with $[x\varotimes a,d]=x\varotimes ad$, and $[x\varotimes a,y\varotimes a']=[x,y]\varotimes aa'+(x\mid y)\langle a,a'\rangle$
where $x,y\in\mathfrak{g}$, $a,a'\in A$ and $d,\langle a,a'\rangle\in D$. 
\item[(3)]  $\Gamma=\Delta\cup\{0\}$ and $\Delta=B_{r}$, $C_{r},$ or $D_{r}$
with $r\ge2$, see \cite{benkart1996lie}. Note that $L$ is also
$BC_{r}$-graded so (5) can be used instead. 
\item[(4)] $\Gamma=\Delta\cup\{0\}$ and $\Delta=F_{4},G_{2}$, see \cite{benkart1996lie}.
\item[(5)]  $\Gamma=BC_{r}\cup\{0\}$, $\Delta=B_{r}$, $C_{r},$ or $D_{r}$,
$r\ge3$, and $\Delta\ne D_{3}$, see \cite{allison2002lie}. Then
there exists an $\bbF$-algebra $\mathfrak{a}$ with involution $\eta$
having symmetric elements $A$ and skew symmetric elements $B$ relative
to $\eta$, an $\mathfrak{a}$-module $C$, an $\mathfrak{a}$-sesquilinear
form $\chi(\,,\,)$ on $C$ so that
\begin{itemize}
\item[(a)]  $\mathfrak{a}$ is associative unless $r=3$ and $\mathfrak{g}$-has
type $C_{3}$ in which case $\mathfrak{a}$ is alternative and $A$
is contained in the nucleus (associative center) of $\mathfrak{a};$
\item[(b)]  $C$ is an associative $\mathfrak{a}$-module and $\chi(\,,\,)$
is hermitian (skew-hermitian) if the form on the natural $\mathfrak{g}$-module
$V$ is symmetric (skew-symmetric);
\item[(c)]  $L=(\mathfrak{g}\otimes A)\oplus(\mathfrak{s}\otimes B)\oplus(V\otimes C)\oplus D$
where $D$ is the centralizer of $\mathfrak{g}$ in $L$ and $\mathfrak{s}$
is the irreducible $\mathfrak{g}$-module of highest weight $2\omega_{1}$
(if $\mathfrak{g}$ is of type $B_{r}$ or $D_{r}$) or $\omega_{2}$
(if $\mathfrak{g}$ is of type $C_{r}$), see Proposition \ref{prop:4om}.
 Moreover, we may suppose that there exist several products/mappings
between the components of the coordinate algebra, which induce multiplication
in $L$, see \cite{allison2002lie} for details . 
\end{itemize}
\end{itemize}

\subsection{Basic properties of $\Gamma$-graded Lie algebras}
\begin{lem}
\label{Le- pregra} Let $L$ be a Lie algebra containing a non-zero
split semisimple subalgebra $\mathfrak{g}$ and let $V(\lambda)$
denotes the simple $\mathfrak{g}$-module with highest weight $\lambda$.
Then $L$ is $(\Gamma,\mathfrak{g})$-pregraded for some finite set
$\Gamma$ if and only if there exists a finite set $Q$ of dominant
weights of $\mathfrak{g}$ such that $L$ is the direct sum of finite-dimensional
irreducible $\mathfrak{g}$-modules whose highest weights are in $Q$,
i.e. as a $\mathfrak{g}$-module, $L\cong\underset{\lambda\in Q}{\bigoplus}V(\lambda)\otimes W_{\lambda}$
for some vector spaces $W_{\lambda}$ (the ~vector space $W_{\lambda}$
indexes the copies of $V(\lambda)$ and the $\mathfrak{g}$-action
is given by $x.(v_{\lambda}\otimes w_{\lambda})=[x,v_{\lambda}\otimes w_{\lambda}]=x.v_{\lambda}\otimes w_{\lambda}$
for $x\in\mathfrak{g}$, $v_{\lambda}\in V(\lambda)$ and $w_{\lambda}\in W_{\lambda}$).
\end{lem}

\begin{proof}
The ``if'' part is obvious with $\Gamma$ being the union of the
weights of the modules $V(\lambda)$, $\lambda\in Q$. To prove the
``only if'' part it is enough to note that $L$ is locally finite
as a $\mathfrak{g}$-module (i.e every finitely generated submodule
is finite-dimensional), so $L$ is semisimple as a $\mathfrak{g}$-module
(see for example \cite[Lemma 2.2]{benkart2003lieB}).
\end{proof}
\begin{prop}
\label{pre-4 equivalent} Let $\mathfrak{g}$ be a split semisimple
subalgebra of a Lie algebra $L$ and suppose $L$ is $(\Gamma,\mathfrak{g})$-pregraded.
Let $G$ be the ideal of $L$ generated by $\mathfrak{g}$. Then $G=\bigoplus_{\alpha\in\Gamma\setminus\{0\}}L_{\alpha}+\sum_{\alpha,-\alpha\in\Gamma\setminus\{0\}}\left[L_{\alpha},L_{-\alpha}\right]$
and it is $(\Gamma,\mathfrak{g})$-graded. In particular, $L$ is
$(\Gamma,\mathfrak{g})$-graded if and only if $G=L$.
\end{prop}

\begin{proof}
Note that $L_{\alpha}=[\mathfrak{g}_{0},L_{\alpha}]\subseteq G$ for
all $\alpha\neq0$, so $G$ contains the subalgebra $G'=\bigoplus_{\alpha\in\Gamma\setminus\{0\}}L_{\alpha}+\sum_{\alpha,-\alpha\in\Gamma\setminus\{0\}}\left[L_{\alpha},L_{-\alpha}\right]$,
which is clearly $(\Gamma,\mathfrak{g})$-graded. Note that $[G',L_{0}]\subseteq G'$
so $G'$ is an ideal of $L$ containing $\mathfrak{g}$. Therefore
$G=G'$, as required. 
\end{proof}
\begin{cor}
\label{Th-simple and G} Let $\mathfrak{g}$ be a split semisimple
finite dimensional subalgebra of a simple Lie algebra $L$. Suppose
$L$ is finite dimensional or $(\Gamma,\mathfrak{g})$-pregraded for
some $\Gamma$. Then $L$ is $(\Gamma,\mathfrak{g})$-graded.
\end{cor}

\begin{prop}
\label{Th-transtivity of gradua} Suppose $L$ is $(\Gamma_{1},\mathfrak{g}_{1})$-graded
and $\mathfrak{g}_{1}$ is $(\Gamma_{2},\mathfrak{g}_{2})$-graded.
Then $L$ is $(\Gamma_{3},\mathfrak{g}_{2})$-graded where $\Gamma_{3}$
is the set of all weights of the $\mathfrak{g}_{2}$-module $L.$
\end{prop}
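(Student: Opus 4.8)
\textbf{Proof proposal for Proposition \ref{Th-transtivity of gradua}.}

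The plan is to verify the three axioms $(\Gamma1)$--$(\Gamma3)$ of Definition \ref{def of gamma} for $L$ with respect to the subalgebra $\mathfrak{g}_2$ and the weight set $\Gamma_3$. First I would set up notation: let $\mathfrak{h}_1$ be a split Cartan subalgebra of $\mathfrak{g}_1$ witnessing its $\Gamma_1$-grading, and let $\mathfrak{h}_2$ be a split Cartan subalgebra of $\mathfrak{g}_2$ witnessing the $(\Gamma_2,\mathfrak{g}_2)$-grading of $\mathfrak{g}_1$; since $\mathfrak{g}_2\subseteq\mathfrak{g}_1\subseteq L$, axiom $(\Gamma1)$ for $L$ over $\mathfrak{g}_2$ is immediate with Cartan subalgebra $\mathfrak{h}_2$. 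For $(\Gamma2)$, the key observation is that $L$ is a locally finite $\mathfrak{g}_1$-module (by Lemma \ref{Le- pregra}, applied to the $\Gamma_1$-grading), hence a locally finite, and therefore semisimple, $\mathfrak{g}_2$-module once we know $L$ is a sum of finite-dimensional $\mathfrak{g}_2$-submodules; this holds because each finite-dimensional irreducible $\mathfrak{g}_1$-summand of $L$ decomposes (by restriction) into finitely many finite-dimensional $\mathfrak{g}_2$-modules. Consequently $L=\bigoplus_{\alpha}L_{\alpha}$ where $\alpha$ ranges over the weights of $\mathfrak{h}_2$ occurring in $L$, and by definition this index set is exactly $\Gamma_3$ (and $\Gamma_3$ is finite because only finitely many $\mathfrak{g}_1$-isotypic components occur and each contributes finitely many $\mathfrak{g}_2$-weights). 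One should also note $\Gamma_3\supseteq\Gamma_2\supseteq\Delta_2\cup\{0\}$, so $\Gamma_3$ is a legitimate weight system in the sense of Definition \ref{def of gamma}.

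The substantive point is axiom $(\Gamma3)$: one must show $L_0^{(2)}=\sum_{\alpha,-\alpha\in\Gamma_3\setminus\{0\}}[L_\alpha,L_{-\alpha}]$, where $L_0^{(2)}$ is the zero weight space for $\mathfrak{h}_2$. By Proposition \ref{pre-4 equivalent}, this is equivalent to showing $\ll\mathfrak{g}_2\gg_L=L$, i.e. the ideal of $L$ generated by $\mathfrak{g}_2$ is all of $L$. Here I would argue in two stages. Since $\mathfrak{g}_1$ is $(\Gamma_2,\mathfrak{g}_2)$-graded, Proposition \ref{pre-4 equivalent} gives $\ll\mathfrak{g}_2\gg_{\mathfrak{g}_1}=\mathfrak{g}_1$; a fortiori the ideal generated by $\mathfrak{g}_2$ inside $L$ contains $\mathfrak{g}_1$, so $\ll\mathfrak{g}_1\gg_L\subseteq\ll\mathfrak{g}_2\gg_L$. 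Since $L$ is $(\Gamma_1,\mathfrak{g}_1)$-graded, Proposition \ref{pre-4 equivalent} again gives $\ll\mathfrak{g}_1\gg_L=L$. Combining, $L=\ll\mathfrak{g}_1\gg_L\subseteq\ll\mathfrak{g}_2\gg_L\subseteq L$, hence $\ll\mathfrak{g}_2\gg_L=L$, which by Proposition \ref{pre-4 equivalent} yields $(\Gamma3)$ and completes the proof.

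I expect the only mild subtlety to be the bookkeeping in $(\Gamma2)$: one must be careful that ``$L$ is a semisimple $\mathfrak{g}_2$-module with finitely many weights'' really does follow from the $\mathfrak{g}_1$-structure, which is where local finiteness (via Lemma \ref{Le- pregra} and the cited \cite[Lemma 2.2]{BE}) does the work; restricting finite-dimensional $\mathfrak{g}_1$-modules to the semisimple subalgebra $\mathfrak{g}_2$ keeps them finite-dimensional and semisimple, and a direct sum of such is still semisimple, so the weight-space decomposition for $\mathfrak{h}_2$ is automatic. Everything else is a direct application of the equivalences already recorded in Proposition \ref{pre-4 equivalent}, so there is no real obstacle; the statement is essentially a transitivity formality once the reduction to ``$\ll\mathfrak{g}_2\gg_L=L$'' is made.
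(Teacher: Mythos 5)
Your proposal is correct and follows essentially the same route as the paper: the authors also dispose of $(\Gamma1)$ and $(\Gamma2)$ as immediate and reduce $(\Gamma3)$ to the ideal computation $\ll\mathfrak{g}_{2}\gg_{L}=\ll\ll\mathfrak{g}_{2}\gg_{\mathfrak{g}_{1}}\gg_{L}=\ll\mathfrak{g}_{1}\gg_{L}=L$ via Proposition \ref{pre-4 equivalent}, which is exactly your two-stage argument. Your extra care with the semisimplicity and finiteness bookkeeping in $(\Gamma2)$ is a harmless elaboration of what the paper calls obvious.
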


\begin{proof}
Clearly, $L$ is $(\Gamma_{3},\mathfrak{g}_{2})$-pregraded. By Lemma
\ref{pre-4 equivalent}, $L$ is generated by $\mathfrak{g}_{1}$
as an ideal and $\mathfrak{g}_{1}$ itself is generated by $\mathfrak{g}_{2}$
as an ideal, so the ideal of $L$ generated by $\mathfrak{g}_{2}$
coincides with $L$. Hence by Lemma \ref{pre-4 equivalent} $L$ is
$(\Gamma_{3},\mathfrak{g}_{2})$-graded.
\end{proof}
\begin{lem}
\label{Le-extenrnal} Let $L_{i}$ be $(\Gamma_{i},\mathfrak{g}_{i})$-graded
for $i=1,2$. Suppose that $\mathfrak{g}_{1}\cong\mathfrak{g}_{2}$.
Let $\mathfrak{g}=\{(x,x)\mid x\in\mathfrak{g}_{1}\}\cong\mathfrak{g}_{1}$
be the diagonal subalgebra of $\mathfrak{g}_{1}\oplus\mathfrak{g}_{2}\subseteq L_{1}\oplus L_{2}$.
Then $L_{1}\oplus L_{2}$ is $(\Gamma_{1}\cup\Gamma_{2},\mathfrak{g})$-graded. 
\end{lem}

\begin{proof}
Clearly, $L_{1}\oplus L_{2}$ is $(\Gamma_{1}\cup\Gamma_{2},\mathfrak{g})$-pregraded.
It remains to note that the ideal generated by $\mathfrak{g}$ coincides
with $L_{1}\oplus L_{2}$ and use Proposition \ref{pre-4 equivalent}.
\end{proof}
\begin{lem}
\label{le-semi} Let $L$ be a non-zero finite-dimensional split semisimple
Lie algebra. Then $L$ is $(\Gamma,sl_{2})$-graded for some $\Gamma$.
\end{lem}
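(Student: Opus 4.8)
The plan is to reduce the statement to the simple case already in hand. A finite-dimensional split semisimple Lie algebra $S$ over $\bbF$ decomposes as a direct sum $S = S_1 \oplus \cdots \oplus S_k$ of finite-dimensional split simple ideals. By Lemma \ref{Le-extenrnal} (applied inductively), it suffices to show that each simple summand $S_i$ is $(\Gamma_i, sl_2)$-graded for a suitable $\Gamma_i$ and a suitable copy of $sl_2$ inside $S_i$; then the diagonal $sl_2$ works for the whole sum and we take $\Gamma = \bigcup_i \Gamma_i$.

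So I would focus on a single finite-dimensional split simple Lie algebra $S$. The key observation is that $S$ contains a split copy of $sl_2$: pick any root $\alpha$ of $S$ relative to a split Cartan subalgebra, and the corresponding $sl_2$-triple $\langle e_\alpha, h_\alpha, f_\alpha \rangle$ spans a subalgebra $\mathfrak{g} \cong sl_2$ which is split (its Cartan $\bbF h_\alpha$ is diagonalizable on $S$ via the root space decomposition). Now invoke Lemma \ref{Th-simple and G}: since $S$ is finite-dimensional simple and $\mathfrak{g} \cong sl_2$ is a split semisimple subalgebra, $S$ is $(\Gamma, \mathfrak{g})$-graded, where $\Gamma$ is the (finite) set of weights of the $\mathfrak{g}$-module $S$. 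This $\Gamma$ is indeed a finite set of integral weights of $sl_2$ containing the root system of $sl_2$ and $0$ — it contains $\{0, \pm\alpha\}$ because $\mathfrak{g}$ itself sits inside $S$ as a submodule — so the setup of Definition \ref{def of gamma} applies.

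Assembling: for general split semisimple $S = S_1 \oplus \cdots \oplus S_k$, each $S_i$ is $(\Gamma_i, sl_2)$-graded by the above, and Lemma \ref{Le-extenrnal} upgrades $S_1 \oplus S_2$ to $(\Gamma_1 \cup \Gamma_2, sl_2)$-graded, then induction on $k$ finishes the argument with $\Gamma = \Gamma_1 \cup \cdots \cup \Gamma_k$.

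I do not expect a genuine obstacle here — the statement is essentially a corollary of Lemmas \ref{Le-extenrnal} and \ref{Th-simple and G}. The only point needing a line of care is that the chosen $sl_2$-triple genuinely spans a \emph{split} semisimple subalgebra of $S$ (so that Lemma \ref{Th-simple and G} applies), which follows because $h_\alpha$ acts semisimply with rational eigenvalues on $S$, being an element of the split Cartan subalgebra; and that $\Gamma_i$ as produced is a legitimate weight system in the sense of Definition \ref{def of gamma}, which is immediate.
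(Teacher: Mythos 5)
Your proposal is correct and follows essentially the same route as the paper: decompose into split simple ideals, apply Lemma \ref{Th-simple and G} to an $sl_{2}$-subalgebra of each summand, and then combine via Lemma \ref{Le-extenrnal}. The only difference is that you spell out why the chosen $sl_{2}$-triple is split, a point the paper leaves implicit.
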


\begin{proof}
Let $L=S_{1}\oplus S_{2}\oplus\cdots\oplus S_{k}$ where $S_{i}$
are split simple ideals. Note that each $S_{i}$ is $(\Gamma,sl_{2})$-graded
(just fix any subalgebra $\mathfrak{g}_{i}\cong sl_{2}$ of $S_{i}$
and use Corollary \ref{Th-simple and G}). It remains to apply Lemma
\ref{Le-extenrnal}.
\end{proof}
\begin{thm}
\label{Th-per is sl2} Let $L$ be a non-zero finite-dimensional perfect
Lie algebra over an algebraically closed field of characteristic zero
and let $Q$ be any Levi subalgebra of $L$. Then

$(1)$ $L$ is $(\Gamma,Q)$-graded for some $\Gamma$.

$(2)$ $L$ is $(\Gamma,sl_{2})$-graded for some $\Gamma$.
\end{thm}

\begin{proof}
(1) Let $R$ be the solvable radical of $L$. Then $L=Q\oplus R$.
Note that $L$ is $(\Gamma,Q)$-pregraded where $\Gamma$ is the set
of weights of the $Q$-module $L$. Denote by $P$ the ideal generated
by $Q$ in $L$. Since $R$ is solvable, $L/P=(P+R)/P\cong R/(P\cap R)$
is solvable. But $L/P$ is perfect, so $L/P=\left\{ 0\right\} $ and
$L=P$. By Proposition \ref{pre-4 equivalent}, $L$ is $(\Gamma,Q)$-graded.

(2) This follows from Lemma \ref{le-semi} and Proposition \ref{Th-transtivity of gradua}.
\end{proof}

\subsection{$\Theta_{n}$-graded and $BC_{n}$-graded Lie algebras}

In this subsection we discuss the relationship between $\Theta_{n}$-graded
and $BC_{n}$-graded Lie algebras. Let $\mathfrak{g}$ be a split
simple Lie algebra of classical type $A_{n}$, $B_{n}$, $C_{n}$
or $D_{n}$. Throughout this paper, $\{\omega_{1},\dots,\omega_{n}\}$
is the set of the fundamental weights of $\mathfrak{g}$; $V_{\mathfrak{g}}(\omega)$
(or simply $V(\omega)$) denotes the highest weight $\mathfrak{g}$-module
of weight $\omega$; $V_{\mathfrak{g}}:=V_{\mathfrak{g}}(\omega_{1})$
(or simply $V$) is the natural $\mathfrak{g}$-module; if $M$ is
a $\mathfrak{g}$-module then $M'$ is its dual and $\mathcal{W}(M)$
is the set of weights of $M$. If $\mathfrak{g}$ is of type $A_{n-1}$,
we will use the following notations for the $\mathfrak{g}$-modules
below:
\[
\mathfrak{g}:=V(\omega_{1}+\omega_{n-1}),\ V:=V(\omega_{1}),\ S:=V(2\omega_{1}),\ \Lambda:=V(\omega_{2})\text{ and }T:=V(0).
\]
Recall that for type $A_{n-1}$, $V'\cong V(\omega_{n-1}),$ $S'\cong V(2\omega_{n-1})$,
$\Lambda'\cong V(\omega_{n-2})$ and $\omega_{i}=\varepsilon_{1}+\varepsilon_{2}+\cdots+\varepsilon_{i}$
for $i=1,\dots,n-1$ were $\{\varepsilon_{1},\dots,\varepsilon_{n}\}$
is the set of the weights of the natural $sl_{n}$-module. 

Recall that a Lie algebra $L$ is $(\Gamma,\mathfrak{g})$-pregraded
if it satisfies $(\Gamma1)$ and $(\Gamma2)$ of Definition \ref{def of gamma}.
It is easy to see that $BC_{n}$-pregraded Lie algebras have the following
decomposition, see for example \cite[2.5]{allison2002lie}.
\begin{prop}
\label{prop:4om} Let $L$ be a Lie algebra and let $\mathfrak{g}$
be a split simple subalgebra of $L$ of type type $B_{n}$, $C_{n}$
($n\geq2$) or $D_{n}$ ($n\geq3$). Then $L$ is $(BC_{n}\cup\{0\},\mathfrak{g})$-pregraded
if and only if the $\mathfrak{g}$-module $L$ is a direct sum of
copies of $V_{\mathfrak{g}}(2\omega_{1})$, $V_{\mathfrak{g}}(\omega_{2})$,
$V_{\mathfrak{g}}(\omega_{1})$ and $V_{\mathfrak{g}}(0)$. 
\end{prop}

By using Lemma \ref{Le- pregra} and looking into the dominant weights
appearing in $\Theta_{n}$ we immediately get a similar decomposition
for the $\Theta_{n}$-pregraded Lie algebras.
\begin{prop}
\label{Th-stru of gamma deco} Let $L$ be a Lie algebra and let $\mathfrak{g}$
be a subalgebra of $L$ isomorphic to $sl_{n}$. Then $L$ is $(\Theta_{n},\mathfrak{g})$-pregraded
if and only if the $\mathfrak{g}$-module $L$ is a direct sum of
copies of $\mathfrak{g}$, $V$, $V'$, $S$, $S'$, $\Lambda,$ $\Lambda'$
and $T$.
\end{prop}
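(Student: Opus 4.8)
The plan is to reduce this to the corresponding statement about weights, using the general theory of Section~\ref{sec2}. The key observation is that, by Lemma~\ref{Le- pregra}, $L$ is $(\Gamma,\mathfrak{g})$-pregraded for some finite $\Gamma$ if and only if $L$ decomposes as a direct sum of finite-dimensional irreducible $\mathfrak{g}$-modules; moreover, once we know $L$ is pregraded, the condition that the set of weights $E$ appearing in the decomposition $L=\bigoplus_{\alpha\in E}L_\alpha$ is contained in a prescribed weight system $\Gamma$ is equivalent to saying that every irreducible constituent of $L$ has all its weights in $\Gamma$. So the whole statement comes down to: an irreducible $sl_{n+1}$-module $M$ has all its weights lying in $\hat A_n$ if and only if $M$ is one of $\mathfrak{g}=V(\omega_1+\omega_n)$, $V=V(\omega_1)$, $V'=V(\omega_n)$, $S=V(2\omega_1)$, $S'=V(2\omega_n)$, $\Lambda=V(\omega_2)$, $\Lambda'=V(\omega_{n-1})$, or $T=V(0)$.

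For the ``if'' direction I would simply check, module by module, that the weight sets of these eight modules are contained in $\hat A_n=\{0,\pm\varepsilon_i\pm\varepsilon_j,\pm\varepsilon_i,\pm2\varepsilon_i\mid 1\le i\ne j\le n+1\}$. Writing weights in the $\varepsilon$-coordinates (with $\sum\varepsilon_i=0$): $V$ has weights $\varepsilon_i$; $V'$ has weights $-\varepsilon_i$; $\Lambda=\wedge^2 V$ has weights $\varepsilon_i+\varepsilon_j$ ($i<j$); $\Lambda'$ has weights $-\varepsilon_i-\varepsilon_j$; $S=S^2V$ has weights $\varepsilon_i+\varepsilon_j$ ($i\le j$), i.e.\ including $2\varepsilon_i$; $S'$ dually; $\mathfrak{g}$ is the adjoint module with weights $0$ and $\varepsilon_i-\varepsilon_j$ ($i\ne j$); and $T$ has weight $0$. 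All of these are visibly in $\hat A_n$, so each of the eight modules is $(\hat A_n,\mathfrak{g})$-pregradable, and hence so is any direct sum of copies of them.

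The substantive direction is the converse: if $M$ is an irreducible $sl_{n+1}$-module all of whose weights lie in $\hat A_n$, then $M$ is one of the eight. Here I would argue via the highest weight $\lambda=\sum_{i=1}^n c_i\omega_i$ (with $c_i\ge0$ integers). Since $\lambda$ itself must lie in $\hat A_n$, I first enumerate which dominant integral weights of $A_n$ lie in $\hat A_n$: translating into $\varepsilon$-coordinates, a dominant weight has weakly decreasing $\varepsilon$-coordinates, and the only elements of $\hat A_n$ with weakly decreasing coordinates are $0$, $\varepsilon_1$, $\varepsilon_1+\varepsilon_2$, $2\varepsilon_1$, $-\varepsilon_{n+1}$, $-\varepsilon_n-\varepsilon_{n+1}$, $-2\varepsilon_{n+1}$, and $\varepsilon_1-\varepsilon_{n+1}$ — corresponding precisely to the highest weights $0$, $\omega_1$, $\omega_2$, $2\omega_1$, $\omega_n$, $\omega_{n-1}$, $2\omega_n$, $\omega_1+\omega_n$. (This is where the hypothesis $n\ge4$, implicit in the running setup, keeps these eight genuinely distinct and keeps $\omega_2\ne\omega_{n-1}$, etc.) So the highest weight is forced to be one of these eight; conversely each of these eight highest weights does occur (they are the eight modules listed), so the list is exactly right, and no further constituents are possible. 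The main obstacle — really the only place any care is needed — is the case-check that no dominant weight \emph{other} than these eight lies in $\hat A_n$; this is a short finite computation once one writes everything in $\varepsilon$-coordinates and uses dominance to order them. Assembling the pieces: $L$ is $(\hat A_n,\mathfrak{g})$-pregraded $\iff$ $L$ is a sum of finite-dimensional irreducibles (Lemma~\ref{Le- pregra}) with all weights in $\hat A_n$ $\iff$ every constituent is one of the eight modules $\iff$ $L$ is a direct sum of copies of $\mathfrak{g},V,V',S,S',\Lambda,\Lambda',T$, which is the claim.
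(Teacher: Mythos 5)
Your proposal is correct and follows essentially the same route as the paper: reduce to complete reducibility via Lemma~\ref{Le- pregra} and then observe that the only dominant weights lying in $\hat{A}_{n}$ are $0,\omega_{1},\omega_{n},2\omega_{1},2\omega_{n},\omega_{2},\omega_{n-1},\omega_{1}+\omega_{n}$, which are exactly the highest weights of the eight listed modules. You merely spell out the weight enumeration and the ``if'' direction in more detail than the paper, which treats both as immediate.
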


Suppose $L$ is $(\Theta_{n},\mathfrak{g})$-graded. By collecting
isomorphic summands of $L$ into isotypic components, we may assume
that there are vector spaces $A,B,B',C,C',E,E'$ such that 
\begin{equation}
L\cong(\mathfrak{g}\otimes A)\oplus(V\otimes B)\oplus(V'\otimes B')\oplus(S\otimes C)\oplus(S'\otimes C')\oplus(\Lambda\otimes E)\oplus(\Lambda'\otimes E')\oplus D\label{eq:drezh}
\end{equation}
where $D$ is the sum of the trivial $\mathfrak{g}$-modules (and
also the centralizer of $\mathfrak{g}$ in $L$). 

Recall that $\mathcal{W}(M)$ denotes the set of weights of a $\mathfrak{g}$-module
$M$ and $M'$ denotes the dual of $M$. If $\mathfrak{k}$ is a subalgebra
of $\mathfrak{g}$ we denote by $M\downarrow\mathfrak{k}$ the restriction
of the $\mathfrak{g}$-module $M$ to $\mathfrak{k}$. We will need
the following trivial observation. 
\begin{lem}
\label{restrection} Let $\mathfrak{k}$ be a simple Lie algebra of
type type $B_{r}$, $C_{r}$ or $D_{r}$ and let $\mathfrak{g}$ be
a simple Lie algebra of type $A_{n-1}$. Denote $\Gamma_{\mathfrak{k}}:=\mathcal{W}((T\oplus V_{\mathfrak{k}})\otimes(T\oplus V_{\mathfrak{k}}))$
and $\Gamma_{\mathfrak{g}}:=\mathcal{W}((T\oplus V_{\mathfrak{g}}\oplus V_{\mathfrak{g}}')\otimes(T\oplus V_{\mathfrak{g}}\oplus V_{\mathfrak{g}}')).$
Then $\Gamma_{\mathfrak{k}}=BC_{r}\cup\{0\}$ and $\Gamma_{\mathfrak{g}}=\Theta_{n}$.
Moreover, 
\end{lem}

\begin{itemize}
\item[(1)]  if $\mathfrak{k}\cong so_{n}$ is a naturally embedded subalgebra
of $\mathfrak{g}\cong sl_{n}$ then $V_{\mathfrak{g}}\downarrow\mathfrak{k}\cong V_{\mathfrak{k}}$,
$V_{\mathfrak{g}}'\downarrow\mathfrak{k}\cong V_{\mathfrak{k}}$ and
$\Gamma_{\mathfrak{g}}\downarrow\mathfrak{k}=\Gamma_{\mathfrak{k}}$;
\item[(2)]  if $\mathfrak{g}\cong sl_{n}$ is a naturally embedded subalgebra
of $\mathfrak{k}\cong so_{2n+1}$, $so_{2n}$ or $sp_{2n}$ then $V_{\mathfrak{k}}\downarrow\mathfrak{g}\cong V_{\mathfrak{g}}\oplus V_{\mathfrak{g}}'$
(or $V_{\mathfrak{g}}\oplus V_{\mathfrak{g}}'\oplus T$ if $\mathfrak{k}\cong so_{2n+1}$)
and $\Gamma_{\mathfrak{k}}\downarrow\mathfrak{g}=\Gamma_{\mathfrak{g}}$.
\end{itemize}
\begin{thm}
\label{bc} Let $n\ge2$ and $r=\lfloor\frac{n}{2}\rfloor$. Then
every $\Theta_{n}$-graded Lie algebra is $BC_{r}$-graded with grading
subalgebra of type $B_{r}$ (if $n$ is odd) or $D_{r}$ (if $n$
is even). 
\end{thm}

\begin{proof}
Suppose $L$ is $(\Theta_{n},\mathfrak{g})$-graded. Let $\mathfrak{k}\cong so_{n}$
be a naturally embedded subalgebra of $\mathfrak{g}\cong sl_{n}$.
Then $\mathfrak{k}$ is of type $B_{r}$ (if $n$ is odd) or $D_{r}$
(if $n$ is even) for $r=\lfloor\frac{n}{2}\rfloor$. Note that $sl_{n}$
is $(BC_{r}\cup\{0\},\mathfrak{k})$-graded. By Proposition \ref{Th-transtivity of gradua},
we only need to show that the set of all weights of the $\mathfrak{k}$-module
$L$ is a subset of $BC_{r}\cup\{0\}$. Using Lemma \ref{restrection},
we get, as required,
\[
\mathcal{W}(L\downarrow\mathfrak{k})=\mathcal{W}(L\downarrow\mathfrak{g})\downarrow\mathfrak{k}\subseteq\Theta_{n}\downarrow\mathfrak{k}=\Gamma_{\mathfrak{g}}\downarrow\mathfrak{k}=\Gamma_{\mathfrak{k}}=BC_{r}\cup\{0\}.
\]
 
\end{proof}
\begin{rem}
\label{rem:finer} Suppose $L$ is $(\Theta_{n},\mathfrak{g})$-graded
($n\ge5$). Let $\mathfrak{k}\cong so_{n}$ be a naturally embedded
subalgebra of $\mathfrak{g}\cong sl_{n}$. As shown in the proof of
Theorem \ref{bc}, the algebra $L$ is $BC_{r}$-graded with respect
to the grading subalgebra $\mathfrak{k}$ with $r=\lfloor\frac{n}{2}\rfloor.$
The general theory of $BC_{r}$-graded Lie algebras gives multiplication
structure of $L$ in terms of $\mathfrak{k}$-decomposition components.
We are going to show that the multiplication structure of $L$ as
an $(\Theta_{n},\mathfrak{g})$-graded algebra is ``finer'' and
more specific. Let $V_{\mathfrak{k}}(\lambda)$ denote the simple
$\mathfrak{k}$-module with highest weight $\lambda$. We have 
\begin{eqnarray}
 &  & V_{\mathfrak{g}}(\omega_{1})\downarrow_{\mathfrak{k}}\cong V_{\mathfrak{g}}(\omega_{n})\downarrow_{\mathfrak{k}}\cong V_{\mathfrak{k}},\;V_{\mathfrak{g}}(2\omega_{1})\downarrow_{\mathfrak{k}}\cong V_{\mathfrak{g}}(2\omega_{n})\downarrow_{\mathfrak{k}}\cong\mathfrak{s}+T,\nonumber \\
 &  & V_{\mathfrak{g}}(\omega_{2})\downarrow_{\mathfrak{k}}\cong V_{\mathfrak{g}}(\omega_{n-1})\downarrow_{\mathfrak{k}}\cong\mathfrak{k},\;V_{\mathfrak{g}}(\omega_{1}+\omega_{n}))\downarrow_{\mathfrak{k}}\cong\mathfrak{k}+\mathfrak{s}\label{eq:rem}
\end{eqnarray}
where $T=V_{\mathfrak{k}}(0)$, $\mathfrak{k}=V_{\mathfrak{k}}(\omega_{2})$,
$\mathfrak{s}=V_{\mathfrak{k}}(2\omega_{1})$ and $V_{\mathfrak{k}}=V_{\mathfrak{k}}(\omega_{1}).$
By combining ($\ref{eq:drezh}$) and (\ref{eq:rem}), we can rewrite
$L$ as a $\mathfrak{k}$-module as follows: 
\[
L=(\mathfrak{k}\otimes(A\oplus E\oplus E'))\oplus(\mathfrak{s}\otimes(A\oplus C\oplus C'))\oplus(V_{\mathfrak{k}}\otimes(B\oplus B'))\oplus D'
\]
where $D'=(T\otimes(C\oplus C'))\oplus D$. If we wish to calculate
the product $[\mathfrak{s}\otimes C,\mathfrak{s}\otimes C]$ in $L$
using $BC_{r}$-grading structure then we can only say that
\[
[\mathfrak{s}\otimes C,\mathfrak{s}\otimes C]\subseteq(\mathfrak{k}\otimes(A\oplus E\oplus E'))\oplus(\mathfrak{s}\otimes(A\oplus C\oplus C'))\oplus D'.
\]
On the other hand, $\Theta_{n}$-grading structure (see Table \ref{t1}
) implies that $[\mathfrak{s}\otimes C,\mathfrak{s}\otimes C]=0$. 
\end{rem}

\begin{thm}
\label{bc-converse} Let $L$ be $BC_{r}$-graded for some integer
$r\ge2$. Then $L$ is $\Theta_{r}$-graded.
\end{thm}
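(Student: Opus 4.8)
The plan is to mirror the proof of Theorem \ref{bc}, but running the restriction in the opposite direction: instead of embedding $so_{n+1}$ into $sl_{n+1}$, I would embed $sl_{r}$ naturally into the grading subalgebra of type $B_{r}$, $C_{r}$ or $D_{r}$ and invoke transitivity of gradings (Proposition \ref{Th-transtivity of gradua}). So suppose $L$ is $BC_{r}$-graded with grading subalgebra $\mathfrak{b}$ of type $B_{r}$, $C_{r}$ or $D_{r}$; equivalently, by Proposition \ref{BCr decomp}, the $\mathfrak{b}$-module $L$ is a direct sum of copies of $V_{\mathfrak{b}}(2\omega_{1})$, $V_{\mathfrak{b}}(\omega_{2})$, $V_{\mathfrak{b}}(\omega_{1})$ and $V_{\mathfrak{b}}(0)$, i.e. of submodules of $(T\oplus V_{\mathfrak{b}})\otimes(T\oplus V_{\mathfrak{b}})$, so $\mathcal{W}(L\downarrow\mathfrak{b})\subseteq\Gamma_{\mathfrak{b}}=BC_{r}\cup\{0\}$.

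The key step is then to fix a naturally embedded $\mathfrak{g}\cong sl_{r}$ inside $\mathfrak{b}$ and apply Remark \ref{restrection}(4), which gives $V_{\mathfrak{b}}\downarrow\mathfrak{g}\cong V_{\mathfrak{g}}\oplus V_{\mathfrak{g}}'$ (plus a trivial summand in the $so_{2r+1}$ case) and $\Gamma_{\mathfrak{b}}\downarrow\mathfrak{g}=\Gamma_{\mathfrak{g}}=\hat{A}_{r-1}$. I first need to know that $sl_{r}$ really is a subalgebra of $\mathfrak{b}$ graded-compatibly: since $so_{2r+1}$, $so_{2r}$ and $sp_{2r}$ each contain a naturally embedded $gl_{r}$ whose derived subalgebra is $sl_{r}$, and $\mathfrak{b}$ is $(\Gamma,\mathfrak{g})$-graded for $\Gamma$ the set of weights of the $\mathfrak{g}$-module $\mathfrak{b}$ by Lemma \ref{Th-simple and G}, this is in place. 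Combining,
\[
\mathcal{W}(L\downarrow\mathfrak{g})=\mathcal{W}(L\downarrow\mathfrak{b})\downarrow\mathfrak{g}\subseteq\Gamma_{\mathfrak{b}}\downarrow\mathfrak{g}=\Gamma_{\mathfrak{g}}=\hat{A}_{r-1}.
\]
Hence $L$ is $(\hat{A}_{r-1},\mathfrak{g})$-pregraded, and since $L$ is $(\Gamma_{1},\mathfrak{b})$-graded with $\mathfrak{b}$ in turn $(\Gamma_{2},\mathfrak{g})$-graded, Proposition \ref{Th-transtivity of gradua} yields that $L$ is $(\hat{A}_{r-1},\mathfrak{g})$-graded.

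The main obstacle I anticipate is purely bookkeeping rather than conceptual: verifying cleanly that a natural $sl_{r}\hookrightarrow\mathfrak{b}$ exists as a \emph{split} semisimple subalgebra (so that Lemma \ref{Th-simple and G} and Proposition \ref{Th-transtivity of gradua} apply verbatim), and that the restriction formula $V_{\mathfrak{b}}\downarrow\mathfrak{g}\cong V_{\mathfrak{g}}\oplus V_{\mathfrak{g}}'$ of Remark \ref{restrection}(4) genuinely covers all three types $B_{r}$, $C_{r}$, $D_{r}$ including the stray trivial summand for $so_{2r+1}$ — but this last point is harmless since a trivial $\mathfrak{g}$-summand only contributes the weight $0\in\hat{A}_{r-1}$. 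One should also note that $r\ge2$ is needed so that $sl_{r}$ is nontrivial and $\hat{A}_{r-1}$ is defined; for $r\ge2$ everything goes through. No further work is required once the weight inclusion is established, because the pregrading-to-grading promotion is handled entirely by the already-proved transitivity proposition.
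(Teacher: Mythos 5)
Your proposal is correct and follows essentially the same route as the paper's own proof: fix a naturally embedded $sl_{r}$ in the grading subalgebra $\mathfrak{b}$, use Remark \ref{restrection}(4) to conclude $\mathcal{W}(L\downarrow\mathfrak{g})\subseteq\Gamma_{\mathfrak{b}}\downarrow\mathfrak{g}=\hat{A}_{r-1}$, and finish with the transitivity Proposition \ref{Th-transtivity of gradua}. The only difference is that you spell out (via Lemma \ref{Th-simple and G}) why $\mathfrak{b}$ is $(\hat{A}_{r-1},\mathfrak{g})$-graded, a step the paper dismisses as easy.
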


\begin{proof}
Suppose $L$ is $BC_{r}$-graded with grading subalgebra $\mathfrak{k}$
of type $B_{r}$, $C_{r}$, or $D_{r}$. Let $\mathfrak{g}\cong sl_{r}$
be a naturally embedded subalgebra of $\mathfrak{k}$. It is easy
to see that $\mathfrak{k}$ is $(\Theta_{r},\mathfrak{g})$-graded.
By Proposition \ref{Th-transtivity of gradua}, we only need to show
that the set of all weights of the $\mathfrak{g}$-module $L$ is
a subset of $\Theta_{r}$. Using Lemma \ref{restrection}, we get,
as required,

\[
\mathcal{W}(L\downarrow\mathfrak{g})=\mathcal{W}(L\downarrow\mathfrak{k})\downarrow\mathfrak{g}\subseteq BC_{r}\cup\{0\}\downarrow\mathfrak{g}=\Gamma_{\mathfrak{k}}\downarrow\mathfrak{g}=\Gamma_{\mathfrak{g}}=\Theta_{r}.
\]
\end{proof}
\begin{rem}
\label{BCgraded_is_Agraded} Let $L$ be as in Theorem \ref{bc-converse}
and $r=5,6$. Then one can easily check that the conditions (\ref{eq:MainAssumptions})
hold, see \cite[Proposition 3.2.7]{yaseen2018generalized}.
\end{rem}

\begin{rem}
Theorems \ref{bc} and \ref{bc-converse} describe the relationship
between $\Theta_{n}$-graded and $BC_{n}$-graded Lie algebras. Even
though there are some similarities between the two theories, we consider
our approach as more natural and universal, which sheds more light
on the structure of weight-graded algebras. Our grading subalgebra
is $sl_{n}$. It is the most basic and natural simple Lie algebra
and appears often as a subalgebra (see for example Theorem \ref{Th-per is sl2}).
The $\Theta_{n}$-grading involves more irreducible modules and yields
a ``finer'' multiplicative structure on a $\Theta_{n}$-graded Lie
algebra $L$ because of the larger number of components in the decomposition
of $L$ (see Remark \ref{rem:finer}). As a result, the coordinate
algebra $\mathfrak{b}$ of $L$ has more components and finer structure
itself, see Theorem \ref{structure}. 
\end{rem}

\section{\label{sec:Multiplication-in--graded}Multiplication in $\Theta_{n}$-graded
Lie algebras}

In this section we describe the multiplicative structure of $(\Theta_{n},sl_{n})$-graded
Lie algebras ($n\ge5$). Recall that $\Theta_{n}=\{0,\pm\varepsilon_{i}\pm\varepsilon_{j},\pm\varepsilon_{i},\pm2\varepsilon_{i}\mid1\leq i\neq j\leq n\}$
were $\{\varepsilon_{1},\dots,\varepsilon_{n}\}$ is the set of the
weights of the natural $sl_{n}$-module. We denote by $\Theta_{n}^{+}$
the set of the dominant weights in $\Theta_{n}$. Thus,
\begin{align*}
\Theta_{n}^{+} & =\{\omega_{1}+\omega_{n-1}=\varepsilon_{1}-\varepsilon_{n},\ \omega_{1}=\varepsilon_{1},\ \omega_{n-1}=-\varepsilon_{n},\\
 & \ \ \ \ \ \ 2\omega_{1}=2\varepsilon_{1},\ 2\omega_{n-1}=-2\varepsilon_{n},\ \omega_{2}=\varepsilon_{1}+\varepsilon_{2},\ \omega_{n-2}=-\varepsilon_{n-1}-\varepsilon_{n},\ 0\}.
\end{align*}
These are the highest weights of the modules $\mathfrak{g}$, $V$,
$V'$, $S$, $S'$, $\Lambda,$ $\Lambda'$ and $T$, respectively.
We will use the same symbol $\Theta_{n}^{+}$ to denote the set of
these modules. We fix a base $\Pi=\{\alpha_{i}=\varepsilon_{i}-\varepsilon_{i+1}\mid i=1,\cdots,n-1\}$
of simple roots for the root system $A_{n-1}=\{\varepsilon_{i}-\varepsilon_{j}\mid1\leq i\neq j\leq n\}.$
Let $L$ be a $\Theta_{n}$-graded Lie algebra and let $\mathfrak{g}$
be the grading subalgebra of $L$ of type $\Delta=A_{n-1}$ with $n\geqq5$.
We identify $\mathfrak{g}$ with the matrix algebra $sl_{n}$. By
(\ref{eq:drezh}), the $\mathfrak{g}$-module $L$ is decomposed as
\[
L\cong(\mathfrak{g}\otimes A)\oplus(V\otimes B)\oplus(V'\otimes B')\oplus(S\otimes C)\oplus(S'\otimes C')\oplus(\Lambda\otimes E)\oplus(\Lambda'\otimes E')\oplus D.
\]
for some vector spaces $A,B,B',C,C',E,E'$ and the centralizer $D$
of $\mathfrak{g}$ in $L$. Alternatively, these spaces can also be
viewed as the corresponding $\mathfrak{g}$-mod Hom-spaces: $A=\hom(\mathfrak{g},L)$,
$B=\hom(V,L)$, etc, so for each simple $\mathfrak{g}$-module $M$,
the space $M\otimes\hom(M,L)$ is canonically identified with the
$M$-isotypic component of $L$ via the evaluation map 
\begin{equation}
M\otimes\hom(M,L)\rightharpoondown L,\,\;m\otimes\varphi\mapsto\varphi(m).\label{eq:MHom}
\end{equation}

\begin{defn}
\label{Lem-Vdual} (1) We identify the $\mathfrak{g}$-modules $V$
and $V'$ with the space $\mathbb{F}^{n}$ of column vectors with
the following actions: 
\begin{align*}
 & x.v=xv\text{ \text{ for }}x\in sl_{n},\,v\in V,\\
 & x.v'=-x^{t}v'\text{ for }x\in sl_{n},\,v'\in V'.
\end{align*}
(2) We identify $S$ and $S'$ (resp. $\Lambda$ and $\Lambda'$)
with symmetric (resp. skew-symmetric) $n\times n$ matrices. Then
$S$, $S'$, $\Lambda$ and $\Lambda'$ are $\mathfrak{g}$-modules
under the actions: 
\begin{align*}
 & x.s=xs+sx^{t}\text{ for }x\in sl_{n},\,s\in S,\\
 & x.\lambda=x\lambda+\lambda x^{t}\text{ for }x\in sl_{n},\,\lambda\in\Lambda,\\
 & x.s'=-s'x-x^{t}s'\text{ for }x\in sl_{n},\,s'\in S',\\
 & x.\lambda'=-\lambda'x-x^{t}\lambda'\text{ for }x\in sl_{n},\,\lambda'\in\Lambda'.
\end{align*}
\end{defn}

Since the subalgebra $\mathfrak{g}$ of $L$ is a $\mathfrak{g}$-submodule,
there exists a distinguished element $1$ of $A$ such that $\mathfrak{g}=\mathfrak{g}\otimes1$.
In particular, 
\begin{equation}
[x\otimes1,y\otimes b]=x.y\otimes b\label{eq:iden 1}
\end{equation}
 where $x\otimes1$ is in $\mathfrak{g}\otimes1$, $y\otimes b$ belongs
to one of the components in (\ref{eq:drezh}) except $D$, and $x.y$
is as in Definition \ref{Lem-Vdual}.

Let $\Theta(M)$ be the $\Theta$-component of $M$, i.e. the sum
of all simple submodules of $M$ with highest weights in $\Theta_{n}^{+}$.
In order to describe multiplication in $L$ we need to calculate first
the $\Theta$-components of the tensor products of the modules in
$\Theta_{n}^{+}$. For the larger ranks, the decompositions are easily
derived from \cite[Cor.3.5]{kumar2010tensor}, \cite[Proposition 3.2]{kumar2010tensor},
\cite[A-2]{seligman1976rational} and the stability results in \cite[Cor. 6.22 and 7.2]{benkart1990stability},
see \cite[Section 3.4]{yaseen2018generalized} for details. The remaining
small cases are easily verified with a computer program (such as LiE).
In Table \ref{t1} below we describe $\Theta$-components of all tensor
product decompositions for the modules in $\Theta_{n}^{+}$ $(n\geq5)$.
If the cell in row $X$ and column $Y$ contains $Z$ this means that
$\Theta(X\otimes Y)=\Theta(Y\otimes X)\cong Z$. 

\begin{table}[H]
\begin{tabular}{|c|r@{\extracolsep{0pt}.}l|c|c|c|c|c|c|}
\hline 
$\otimes$ & \multicolumn{2}{c|}{$\mathfrak{g}$} & $S$ & $\Lambda$ & $S'$ & $\Lambda'$ & $V$ & $V'$\tabularnewline
\hline 
\hline 
$\mathfrak{g}$ & \multicolumn{2}{c|}{$\mathfrak{g+\mathfrak{g}}+T$} & $S+\Lambda$ & $\begin{array}{c}
S+\Lambda\end{array}$ & $S'+\Lambda'$ & $S'+\Lambda'$ & $V$ & $V'$\tabularnewline
\hline 
$S$ & \multicolumn{2}{c|}{} & $0$ & $0$ & $\mathfrak{g}+T$ & $\mathfrak{g}$ & $0$ & $V$\tabularnewline
\hline 
$\Lambda$ & \multicolumn{2}{c|}{} &  & $\begin{array}{c}
0\;(n\ge7)\\
\Lambda'\;(n=6)\\
V'\;(n=5)
\end{array}$ & $\mathfrak{g}$ & $\mathfrak{g}+T$ & $\begin{array}{c}
0\;(n\ge6)\\
\Lambda'\;(n=5)
\end{array}$ & $V$\tabularnewline
\hline 
$S'$ & \multicolumn{2}{c|}{} &  &  & $0$ & $0$ & $V'$ & $0$\tabularnewline
\hline 
$\Lambda'$ & \multicolumn{2}{c|}{} &  &  &  & $\begin{array}{c}
0\;(n\ge7)\\
\Lambda\;(n=6)\\
V\;(n=5)
\end{array}$ & $V'$ & $\begin{array}{c}
0\;(n\ge6)\\
\Lambda\;(n=5)
\end{array}$\tabularnewline
\hline 
$V$ & \multicolumn{2}{c|}{} &  &  &  &  & $S+\Lambda$ & $\mathfrak{g}+T$\tabularnewline
\hline 
$V'$ & \multicolumn{2}{c|}{} &  &  &  &  &  & $S'+\Lambda'$\tabularnewline
\hline 
\end{tabular}

\caption{$\Theta$-component of tensor product decompositions for $sl_{n}$
$(n\protect\geq5)$}

\label{t1}
\end{table}

Let $L$ be an $\Theta_{n}$-graded Lie algebra and let $\mathfrak{g}$
be the grading subalgebra of $L$. Suppose that $n\ge7$ or $n=5,6$
and the conditions (\ref{eq:MainAssumptions}) hold. In (\ref{t2})
we list bases for all non-zero $\mathfrak{g}$-module homomorphism
spaces $\hom(X\otimes Y,Z)$ (we simply write $(X\otimes Y,Z)$) where
$X,Y,Z\in\{\mathfrak{g},V,V',S,\Lambda,S',\Lambda',T\}$ and $X$
and $Y$ are both non-trivial. Note that all of them are $1$-dimensional
except the first one (which is 2-dimensional). 

\begin{flalign}
 & (\mathfrak{g}\otimes\mathfrak{g},\mathfrak{g})=\langle x\otimes y\mapsto xy-yx,\ x\otimes y\mapsto xy+yx-\frac{2}{n}\tra(xy)I\rangle,\label{t2}\\
 & (V\otimes V',\mathfrak{g})=\langle u\otimes v'\mapsto uv'^{t}-\frac{\tra(uv'^{t})}{n}I\rangle,\nonumber \\
 & (S\otimes S',\mathfrak{g})=\langle s\otimes s'\mapsto ss'-\frac{\tra(ss')}{n}I\rangle,\quad(\Lambda\otimes\Lambda',\mathfrak{g})=\langle\lambda\otimes\lambda'\mapsto\lambda\lambda'-\frac{\tra(\lambda\lambda')}{n}I\rangle,\nonumber \\
 & (S\otimes\Lambda',\mathfrak{g})=\langle s\otimes\lambda'\mapsto s\lambda'\rangle,\quad(S'\otimes\Lambda,\mathfrak{g})=\langle s'\otimes\lambda\mapsto s'\lambda\rangle,\nonumber \\
 & (\mathfrak{g}\otimes V,V)=\langle x\otimes v\mapsto xv\rangle,\quad(\mathfrak{g}\otimes V',V')=\langle x\otimes v'\mapsto xv'\rangle,\nonumber \\
 & (\Lambda\otimes V',V)=\langle\lambda\otimes v'\mapsto\lambda v'\rangle,\quad(\Lambda'\otimes V',V')=\langle\lambda'\otimes v'\mapsto\lambda'v'\rangle,\nonumber \\
 & (S\otimes V',V)=\langle s\otimes v'\mapsto sv'\rangle,\quad(S'\otimes V,V')=\langle s'\otimes v\mapsto s'v\rangle,\nonumber \\
 & (\mathfrak{g}\otimes S,S)=\langle x\otimes s\mapsto xs+sx^{t}\rangle,\quad(\mathfrak{g}\otimes\Lambda,\Lambda)=\langle x\otimes\lambda\mapsto x\lambda+\lambda x^{t}\rangle,\nonumber \\
 & (V\otimes V,S)=\langle u\otimes v\mapsto uv^{t}+vu^{t}\rangle,\quad(V'\otimes V',S')=\langle u'\otimes v'\mapsto u'v'^{t}+v'u'^{t}\rangle,\nonumber \\
 & (V\otimes V,\Lambda)=\langle u\otimes v\mapsto uv^{t}-vu^{t}\rangle,\quad(V'\otimes V',\Lambda')=\langle u'\otimes v'\mapsto u'v'^{t}-v'u'^{t}\rangle,\nonumber \\
 & (\mathfrak{g}\otimes\Lambda,S)=\langle x\otimes\lambda\mapsto x\lambda-\lambda x^{t}\rangle,\quad(\Lambda'\mathfrak{\otimes g},S')=\langle\lambda'\otimes x\mapsto\lambda'x-x^{t}\lambda'\rangle,\nonumber \\
 & (\mathfrak{g}\otimes S,\Lambda)=\langle x\otimes s\mapsto xs-sx^{t}\rangle,\quad(S'\mathfrak{\otimes g},S')=\langle s'\otimes x\mapsto s'x+x^{t}s'\rangle,\nonumber \\
 & (\Lambda'\mathfrak{\otimes g},\Lambda')=\langle\lambda'\otimes x\mapsto\lambda'x+x^{t}\lambda'\rangle,\quad(S'\mathfrak{\otimes g},\Lambda')=\langle s'\otimes x\mapsto s'x-x^{t}s'\rangle,\nonumber \\
 & (\mathfrak{g}\otimes\mathfrak{g},T)=\langle x_{1}\otimes x_{2}\mapsto\frac{1}{n}\tra(x_{1}x_{2})\rangle,\quad(V'\otimes V,T)=\langle v'\otimes u\mapsto\frac{1}{n}\tra(uv'^{t})\rangle,\nonumber \\
 & (S\otimes S',T)=\langle s\otimes s'\mapsto\frac{1}{n}\tra(ss')\rangle,\quad(\Lambda\otimes\Lambda',T)=\langle\lambda\otimes\lambda'\mapsto\frac{1}{n}\tra(\lambda\lambda')\rangle.\nonumber 
\end{flalign}

The Lie algebra structure on the decomposition (\ref{eq:drezh}) induces
certain bilinear maps among the spaces $A,B,B',C,C',E,E',D$. Indeed,
denote the irreducible modules and the corresponding spaces by $M_{1},\dots,M_{8}$
and $H_{1},\dots,H_{8}$, respectively. Then $L=\bigoplus_{i=1}^{8}M_{i}\otimes H_{i}$
and $H_{i}=\hom(M_{i},L)$, see (\ref{eq:MHom}). The Lie product
on $L$ can be identified with an element of $\hom(L\otimes L,L)$.
Since any homomorphisms between non-isomorphic irreducible $\mathfrak{g}$-modules
are zero, the product is actually an element of $\hom(\Theta(L\otimes L),L)$
where $\Theta(L\otimes L)$ is the sum of all irreducible $\mathfrak{g}$-submodules
of $L\otimes L$ isomorphic to one of $M_{1},\dots,M_{8}$. The $\mathfrak{g}$-module
$L\otimes L$ is decomposed as $L\otimes L=\bigoplus_{i,j=1}^{8}M_{i}\otimes M_{j}\otimes H_{i}\otimes H_{j}$
and the $\Theta$-component of $L\otimes L$ can be found as 
\[
\Theta(L\otimes L)=\bigoplus_{k=1}^{8}M_{k}\otimes\Hom_{\ffg}(L\otimes L,M_{k})=\bigoplus_{k=1}^{8}M_{k}\otimes\left(\oplus_{i,j=1}^{8}M_{ij}^{k}\otimes H_{i}\otimes H_{j}\right)
\]
 where $M_{ij}^{k}=\Hom_{\ffg}(M_{i}\otimes M_{j},M_{k})$. Then the
Lie bracket on $L$ is an element $\mu$ of the space
\begin{eqnarray*}
\hom(\Theta(L\otimes L),L) & = & \bigoplus_{k=1}^{8}\Hom_{\bbF}\left(\oplus_{i,j=1}^{8}M_{ij}^{k}\otimes H_{i}\otimes H_{j},H_{k}\right)\\
 & = & \bigoplus_{i,j,k=1}^{8}\Hom_{\bbF}\left(M_{ij}^{k}\otimes H_{i}\otimes H_{j},H_{k}\right)\\
 & = & \bigoplus_{i,j,k=1}^{8}\Hom_{\bbF}\left(M_{ij}^{k},\Hom_{\bbF}(H_{i}\otimes H_{j},H_{k})\right).
\end{eqnarray*}
Denote by $\{b_{1}^{kij},b_{2}^{kij}\dots\}$ the basis of the space
$\hom(M_{i}\otimes M_{j},M_{k})$ as in (\ref{t2}). Then there exist
unique elements $\chi_{1}^{kij},\chi_{2}^{kij},\dots$ in $\Hom_{\bbF}(H_{i}\otimes H_{j},H_{k})$
(the images of $b_{1}^{kij},b_{2}^{kij}\dots$) which correspond to
multiplication $\mu$ on $L$. These elements $\chi_{s}^{kij}\in\Hom_{\bbF}(H_{i}\otimes H_{j},H_{k})$
are the claimed bilinear maps $H_{i}\times H_{j}\rightarrow H_{k}$.

In Table \ref{t3}, if the cell in row $X$ and column $Y$ contains
$Z$, this means that there is a bilinear map $X\otimes Y\rightarrow Z$
given by $x\otimes y\mapsto(x,y)_{Z}$. For simplicity of notation,
we will write $dy$ instead of $(d,y)_{D}$ if $X=Z=D$ and we will
write $\langle x,y\rangle$ instead of $(x,y)_{D}$ if $X,Y\neq D$
and $Z=D.$ In the case $X=Y=Z=A$, we have two bilinear products
$a_{1}\otimes a_{2}\mapsto a_{1}\circ a_{2}$ and $a_{1}\otimes a_{2}\mapsto[a_{1},a_{2}]$
for $a_{1},a_{2}\in A$. Note that some of the cells are empty. The
corresponding products $X\otimes Y\rightarrow Z$ will be defined
later by extending the existing maps $Y\otimes X\rightarrow Z$. This
will make the table symmetric.

\begin{table}[H]
\begin{tabular}{|c|r@{\extracolsep{0pt}.}l|c|c|c|c|c|c|c|}
\hline 
$.$ & \multicolumn{2}{c|}{$A$} & $B$ & $B'$ & $C$ & $C'$ & $E$ & $E'$ & $D$\tabularnewline
\hline 
\hline 
$A$ & \multicolumn{2}{c|}{$\begin{array}{c}
(A,\circ,[\ ]),\,D\end{array}$} & $B$ &  & $C,E$ &  & $C,E$ &  & \tabularnewline
\hline 
$B$ & \multicolumn{2}{c|}{} & $C,E$ & $A,D$ & $0$ &  & $0$ &  & \tabularnewline
\hline 
$B'$ & \multicolumn{2}{c|}{$A$} &  & $C',E'$ & $B$ & $0$ & $B$ & $0$ & \tabularnewline
\hline 
$C$ & \multicolumn{2}{c|}{} & $0$ &  & $0$ & $A,D$ & $0$ & $A$ & \tabularnewline
\hline 
$C'$ & \multicolumn{2}{c|}{$C',E'$} & $B'$ & $0$ &  & $0$ & $A$ & $0$ & \tabularnewline
\hline 
$E$ & \multicolumn{2}{c|}{} & $0$ &  & $0$ &  & $0$ & $A,D$ & \tabularnewline
\hline 
$E'$ & \multicolumn{2}{c|}{$C',E'$} & $B'$ & $0$ &  & $0$ &  & $0$ & \tabularnewline
\hline 
$D$ & \multicolumn{2}{c|}{$A$} & $B$ & $B'$ & $C$ & $C'$ & $E$ & $E'$ & $D$\tabularnewline
\hline 
\end{tabular}

\caption{Bilinear products}

\label{t3}
\end{table}

Let $x$ and $y$ be $n\times n$ matrices. We will use the following
products: $[x,y]:=xy-yx$, $x\circ y:=xy+yx-\frac{2}{n}\tra(xy)I$,
$x\diamond y:=xy+yx$ and $(x\mid y):=\frac{1}{n}\tra(xy)$. 

Following the methods in \cite{seligman1976rational,berman1992lie,benkart1996lie,allison2002lie}
and using the results of (\ref{t2}), Tables \ref{t1} and \ref{t3},
we may suppose that the multiplication in $L$ is given as follows
(see \cite[Section 3.4]{yaseen2018generalized} for a sample argument).
For all $x,y\in sl_{n}$, $u,v\in V$, $u',v'\in V'$, $s\in S$,
$\lambda\in\Lambda$, $s'\in S'$, $\lambda'\in\Lambda'$ and for
all $a,a_{1},a_{2}\in A$, $b,b_{1},b_{2}\in B$, $b',b_{1}',b_{2}'\in B'$,
$c\in C$, $c'\in C'$, $e\in E$, $e'\in E'$ and $d,d_{1},d_{2}\in D$,

\begin{flalign}
 & [x\otimes a_{1},y\otimes a_{2}]=(x\circ y)\otimes\frac{[a_{1},a_{2}]}{2}+[x,y]\otimes\frac{a_{1}\circ a_{2}}{2}+(x\mid y)\langle a_{1},a_{2}\rangle,\label{main for}\\
 & [u\otimes b,v'\otimes b']=(uv'^{t}-\frac{\tra(uv'^{t})}{n}I)\otimes(b,b')_{A}+\frac{2}{n}\tra(uv'^{t})\langle b,b'\rangle=-[v'\otimes b',u\otimes b],\nonumber \\
 & [s\otimes c,s'\otimes c']=(ss'-(s\mid s')I)\otimes(c,c')_{A}+(s\mid s')\langle c,c'\rangle=-[s'\otimes c',s\otimes c],\nonumber \\
 & [\lambda\otimes e,\lambda'\otimes e']=(\lambda\lambda'-(\lambda\mid\lambda')I)\otimes(e,e')_{A}+(\lambda\mid\lambda')\langle e,e'\rangle=-[\lambda'\otimes e',\lambda\otimes e],\nonumber \\
 & [u\otimes b_{1},v\otimes b_{2}]=(uv^{t}+vu^{t})\otimes\frac{(b_{1},b_{2})_{C}}{2}+(uv^{t}-vu^{t})\otimes\frac{(b_{1},b_{2})_{E}}{2},\nonumber \\
 & [u'\otimes b'_{1},v'\otimes b'_{2}]=(u'v'^{t}+v'u'^{t})\otimes\frac{(b'_{1},b'_{2})_{C'}}{2}+(u'v'^{t}-v'u'^{t})\otimes\frac{(b'_{1},b'_{2})_{E'}}{2},\nonumber \\
 & [x\otimes a,s\otimes c]=(xs+sx^{t})\otimes\frac{(a,c)_{C}}{2}+(xs-sx^{t})\otimes\frac{(a,c)_{E}}{2}=-[s\otimes c,x\otimes a],\nonumber \\
 & [x\otimes a,\lambda\otimes e]=(x\lambda+\lambda x^{t})\otimes\frac{(a,e)_{E}}{2}+(x\lambda-\lambda x^{t})\otimes\frac{(a,e)_{C}}{2}=-[\lambda\otimes e,x\otimes a],\nonumber \\
 & [s'\otimes c',x\otimes a]=(s'x+x^{t}s')\otimes\frac{(c',a)_{C'}}{2}+(s'x-x^{t}s')\otimes\frac{(c',a)_{E'}}{2}=-[x\otimes a,s'\otimes c'],\nonumber \\
 & [\lambda'\otimes e',x\otimes a]=(\lambda'x+x^{t}\lambda')\otimes\frac{(e',a)_{E'}}{2}+(\lambda'x-x^{t}\lambda')\otimes\frac{(e',a)_{C'}}{2}=-[x\otimes a,\lambda'\otimes e'],\nonumber \\
 & [s\otimes c,\lambda'\otimes e']=s\lambda'\otimes(c,e')_{A}=-[\lambda'\otimes e',s\otimes c],\nonumber \\
 & [s'\otimes c',\lambda\otimes e]=s'\lambda\otimes(c',e)_{A}=-[\lambda\otimes e,s'\otimes c'],\nonumber \\
 & [x\otimes a,u\otimes b]=xu\otimes(a,b)_{B}=-[u\otimes b,x\otimes a],\nonumber \\
 & [s'\otimes c',u\otimes b]=s'u\otimes(c',b)_{B'}=-[u\otimes b,s'\otimes c'],\nonumber \\
 & [\lambda'\otimes e',u\otimes b]=\lambda'u\otimes(e',b)_{B'}=-[u\otimes b,\lambda'\otimes e'],\nonumber \\
 & [u'\otimes b',x\otimes a]=x^{t}u'\otimes(b',a)_{B'}=-[x\otimes a,u'\otimes b'],\nonumber \\
 & [u'\otimes b',s\otimes c]=su'\otimes(b',c)_{B}=-[s\otimes c,u'\otimes b'],\nonumber \\
 & [u'\otimes b',\lambda\otimes e]=-\lambda u'\otimes(b',e)_{B}=-[\lambda\otimes e,u'\otimes b'],\nonumber \\
 & [d,x\otimes a]=x\otimes da=-[x\otimes a,d],\quad\quad\ [d,u\otimes b]=u\otimes db=-[u\otimes b,d],\nonumber \\
 & [d,s\otimes c]=s\otimes dc=-[s\otimes c,d],\quad\quad\quad[d,\lambda\otimes e]=\lambda\otimes de=-[\lambda\otimes e,d],\nonumber \\
 & [d,s'\otimes c']=s'\otimes dc'=-[s'\otimes c',d],\quad\ [d,u'\otimes b']=u'\otimes db'=-[u'\otimes b',d],\nonumber \\
 & [d,\lambda'\otimes e']=\lambda'\otimes de'=-[\lambda'\otimes e',d],\quad[d_{1},d_{2}]\in D,\nonumber 
\end{flalign}
All other products of the homogeneous components of the decomposition
(\ref{eq:drezh}) are zero. 

\section{\label{chap:4} The coordinate algebra of a $\Theta_{n}$-graded
Lie algebra}

Let $L$ be an $\Theta_{n}$-graded Lie algebra and let $\mathfrak{g}\cong sl_{n}$
be the grading subalgebra of $L$. Assume that $n\ge7$ or $n=5,6$
and the conditions (\ref{eq:MainAssumptions}) hold. Let $\mathfrak{g}^{\pm}=\{x\in sl_{n}\mid x^{t}=\pm x\}$.
Then $\mathfrak{g}\otimes A=(\mathfrak{g^{+}}\oplus\mathfrak{g^{-}})\otimes A=(\mathfrak{g^{+}}\otimes A)\oplus(\mathfrak{g^{-}}\otimes A)=(\mathfrak{g^{+}}\otimes A^{-})\oplus(\mathfrak{g}^{-}\otimes A^{+})$
where $A^{\pm}$ is a copy of the vector space $A$. We denote by
$a^{\pm}$ the image of $a\in A$ in the space $A^{\pm}$. Recall
that we identify $\mathfrak{g}$ with $\mathfrak{g}\otimes1$ where
$1$ is a distinguished element of $A$ and we denote $\mathfrak{a}:=A^{+}\oplus A^{-}\oplus C\oplus E\oplus C'\oplus E'$
and $\mathfrak{b}:=\mathfrak{a}\oplus B\oplus B'$. We show that the
product in $L$ induces a unital algebra structure on both $\mathfrak{a}$
and $\mathfrak{b}$. We prove that $\mathfrak{a}$ is an associative
subalgebra of $\mathfrak{b}$ and $\mathfrak{b}$ (which is not associative
in general) has an involution $\eta$ whose symmetric and skew-symmetric
elements are $A^{+}\oplus E\oplus E'\oplus B\oplus B'$ and $A^{-}\oplus C\oplus C'$.
Let $x$ and $y$ be $n\times n$ matrices. Recall the products $[x,y]:=xy-yx$,
$x\circ y:=xy+yx-\frac{2}{n}\tra(xy)I$, $x\diamond y:=xy+yx$ and
$(x\mid y):=\frac{1}{n}\tra(xy)$. 

\subsection{Unital associative algebra $\mathfrak{a}$}

We are going to define Lie and Jordan multiplication on $\mathfrak{a}$
by extending the bilinear products given in Table \ref{t4} in a natural
way. It can be shown that all products $(\alpha_{1},\alpha_{2})_{Z}$
with $\alpha_{1},\alpha_{2}\in\mathfrak{a}$ are either symmetric
or skew-symmetric. This is why we will write $(\alpha_{1}\circ\alpha_{2})_{Z}$
or $[\alpha_{1},\alpha_{2}]_{Z}$, respectively, instead of $(\alpha_{1},\alpha_{2})_{Z}$.
The aim of this subsection is to show that $\mathfrak{a}$ is an associative
algebra with respect to the new multiplication given by $\alpha_{1}\alpha_{2}:=\frac{[\alpha_{1},\alpha_{2}]}{2}+\frac{\alpha_{1}\circ\alpha_{2}}{2}.$
\begin{rem}
\label{FF} In this remark we rewrite some of the products in (\ref{main for})
in terms of symmetric and skew-symmetric elements. Note that every
$x\in\mathfrak{g}$ is uniquely decomposed as $x=x^{+}+x^{-}$ where
$x^{+}=\frac{x+x^{t}}{2}\in\mathfrak{g^{+}}$ and $x^{-}=\frac{x-x^{t}}{2}\in\mathfrak{g^{-}}$.

(a) Let $x_{1}^{+}\otimes a_{1}^{-},\,x_{2}^{+}\otimes a_{2}^{-}\in\mathfrak{g^{+}}\otimes A^{-}$
and $x_{1}^{-}\otimes a_{1}^{+},\,x_{2}^{-}\otimes a_{2}^{+}\in\mathfrak{g^{-}}\otimes A^{+}$.
Since
\begin{align*}
[x\otimes a_{1},y\otimes a_{2}] & =x\circ y\otimes\frac{[a_{1},a_{2}]}{2}+[x,y]\otimes\frac{a_{1}\circ a_{2}}{2}+(x\mid y)\langle a_{1},a_{2}\rangle
\end{align*}
and $(x_{1}^{+}\mid x_{1}^{-})=\frac{1}{n}\tra(x_{1}^{+}x_{1}^{-})=0$,
we have
\begin{align*}
[x_{1}^{+}\otimes a_{1}^{-},x_{2}^{+}\otimes a_{2}^{-}] & =x_{1}^{+}\circ x_{2}^{+}\otimes\frac{[a_{1}^{-},a_{2}^{-}]_{A^{-}}}{2}+[x_{1}^{+},x_{2}^{+}]\otimes\frac{(a_{1}^{-}\circ a_{2}^{-})_{A^{+}}}{2}+(x_{1}^{+}\mid x_{2}^{+})\langle a_{1}^{-},a_{2}^{-}\rangle\text{,}\\{}
[x_{1}^{-}\otimes a_{1}^{+},x_{2}^{-}\otimes a_{2}^{+}] & =x_{1}^{-}\circ x_{2}^{-}\otimes\frac{[a_{1}^{+},a_{2}^{+}]_{A^{-}}}{2}+[x_{1}^{-},x_{2}^{-}]\otimes\frac{(a_{1}^{+}\circ a_{2}^{+})_{A^{+}}}{2}+(x_{1}^{-}\mid x_{2}^{-})\langle a_{1}^{+},a_{2}^{+}\rangle\text{,}\\{}
[x_{1}^{+}\otimes a_{1}^{-},x_{1}^{-}\otimes a_{1}^{+}] & =x_{1}^{+}\diamond x_{1}^{-}\otimes\frac{[a_{1}^{-},a_{1}^{+}]_{A^{+}}}{2}+[x_{1}^{+},x_{1}^{-}]\otimes\frac{(a_{1}^{-}\circ a_{1}^{+})_{A^{-}}}{2}\text{.}
\end{align*}

(b) Let $s\otimes c\in S\otimes C$ and $\lambda\otimes e\in\Lambda\otimes E$.
Since 
\begin{alignat*}{1}
 & [x\otimes a,s\otimes c]=(xs+sx^{t})\otimes\frac{(a,c)_{C}}{2}+(xs-sx^{t})\otimes\frac{(a,c)_{E}}{2},\\
 & x^{+}s+s(x^{+})^{t}=x^{+}s+sx^{+}=x^{+}\circ s,\,\;x^{+}s-s(x^{+})^{t}=x^{+}s-sx^{+}=[x^{+},s],\\
 & x^{-}s+s(x^{-})^{t}=x^{-}s-sx^{-}=[x^{-},s],\,\;x^{-}s-s(x^{-})^{t}=x^{-}s+sx^{-}=x^{-}\circ s,
\end{alignat*}
we obtain
\begin{align*}
[x^{+}\otimes a^{-},s\otimes c] & =x^{+}\diamond s\otimes\frac{[a^{-},c]_{C}}{2}+[x^{+},s]\otimes\frac{(a^{-}\circ c)_{E}}{2},\\{}
[x^{-}\otimes a^{+},s\otimes c] & =x^{-}\diamond s\otimes\frac{[a^{+},c]_{E}}{2}+[x^{-},s]\otimes\frac{(a^{+}\circ c)_{C}}{2}.
\end{align*}
Similarly, we get
\begin{alignat*}{1}
[x^{+}\otimes a^{-},\lambda\otimes e] & =x^{+}\diamond\lambda\otimes\frac{[a^{-},e]_{E}}{2}+[x^{+},\lambda]\otimes\frac{(a^{-}\circ e)_{C}}{2},\\{}
[x^{-}\otimes a^{+},\lambda\otimes e] & =x^{-}\diamond\lambda\otimes\frac{[a^{+},e]_{C}}{2}+[x^{-},\lambda]\otimes\frac{(a^{+}\circ e)_{E}}{2}.
\end{alignat*}

(c) Let $s'\otimes c'\in S'\otimes C'$ and $\lambda'\otimes e'\in\Lambda'\otimes E'$.
Since
\begin{alignat*}{1}
[s'\otimes c',x\otimes a] & =(s'x+x^{t}s')\otimes\frac{(c',a)_{C'}}{2}+(s'x-x^{t}s')\otimes\frac{(c',a)_{E'}}{2},\\
s'x^{+}+(x^{+})^{t}s' & =s'\circ x^{+},\;s'x^{+}-(x^{+}){}^{t}s'=[s',x^{+}],\\
s'x^{-}+(x^{-})^{t}s' & =[s',x^{-}],\;s'x^{-}-(x^{-})^{t}s'=s'\circ x^{-},
\end{alignat*}
we get
\begin{alignat*}{1}
[s'\otimes c',x^{+}\otimes a^{-}] & =s'\diamond x^{+}\otimes\frac{[c',a^{-}]_{C'}}{2}+[s',x^{+}]\otimes\frac{(c'\circ a^{-})_{E'}}{2},\\{}
[s'\otimes c',x^{-}\otimes a^{+}] & =s'\diamond x^{-}\otimes\frac{[c',a^{+}]_{E'}}{2}+[s',x^{-}]\otimes\frac{(c'\circ a^{+})_{C'}}{2}.
\end{alignat*}
 Similarly, we get
\begin{align*}
[\lambda'\otimes e',x^{+}\otimes a^{-}] & =\lambda'\diamond x^{+}\otimes\frac{[e',a^{-}]_{E'}}{2}+[\lambda',x^{+}]\otimes\frac{(e'\circ a^{-})_{C'}}{2},\\{}
[\lambda'\otimes e',x^{-}\otimes a^{+}] & =\lambda'\diamond x^{-}\otimes\frac{[e',a^{+}]_{C'}}{2}+[\lambda',x^{-}]\otimes\frac{(e'\circ a^{+})_{E'}}{2}.
\end{align*}

(d) For any $x\otimes a\in\mathfrak{g}\otimes A$, we have $x\otimes a=\frac{(x+x^{t})}{2}\otimes a+\frac{(x-x^{t})}{2}\otimes a\in\mathfrak{g^{+}}\otimes A+\mathfrak{g^{-}}\otimes A.$
Since $[s\otimes c,s'\otimes c']=(ss'-(s\mid s')I)\otimes(c,c')_{A}+(s\mid s')\langle c,c'\rangle$,
$ss'-(s\mid s')I+(ss'-(s\mid s')I)^{t}=s\circ s'$ and $ss'-(s\mid s')I-(ss'-(s\mid s')I)^{t}=[s,s']$,
we get 
\[
[s\otimes c,s'\otimes c']=s\circ s'\otimes\frac{[c,c']_{A^{-}}}{2}+[s,s']\otimes\frac{(c\circ c')_{A^{+}}}{2}+(s\mid s')\langle c,c'\rangle.
\]
 Similary, we get
\[
[\lambda\otimes e,\lambda'\otimes e']=\lambda\circ\lambda'\otimes\frac{[e,e']_{A^{-}}}{2}+[\lambda,\lambda']\otimes\frac{(e\circ e')_{A^{+}}}{2}+(\lambda\mid\lambda')\langle e,e'\rangle.
\]
 Since $[s\otimes c,\lambda'\otimes e']=s\lambda'\otimes(c,e')_{A}$
and $s\lambda'+(s\lambda')^{t}=[s,\lambda'],$ $s\lambda'-(s\lambda')^{t}=s\diamond\lambda'$,
we get 
\[
[s\otimes c,\lambda'\otimes e']=s\diamond\lambda'\otimes\frac{[c,e']_{A^{+}}}{2}+[s,\lambda']\otimes\frac{(c\circ e')_{A^{-}}}{2}.
\]
 Similary, we get
\[
[s'\otimes c',\lambda\otimes e]=s'\diamond\lambda\otimes\frac{[c',e]_{A^{+}}}{2}+[s',\lambda]\otimes\frac{(c'\circ e)_{A^{-}}}{2}.
\]
\end{rem}

The mappings $\alpha\otimes\beta\mapsto(\alpha\circ\beta)_{Z_{1}}$
and $\alpha\otimes\beta\mapsto[\alpha,\beta]_{Z_{2}}$ can be extended
to $Y\otimes X$ in a consistent way by defining $(\beta\circ\alpha)_{Z_{1}}=(\alpha\circ\beta)_{Z_{1}}$
and $[\beta,\alpha]_{Z_{2}}=-[\alpha,\beta]_{Z_{2}}$. In Table \ref{t4}
below, if the cell in row $X$ and column $Y$ contains $(Z_{1},\circ)$,
and $(Z_{2},[\ ])$ this means that there is a symmetric bilinear
map $X\times Y\rightarrow Z_{1},$ given by $\alpha\otimes\beta\mapsto(\alpha\circ\beta)_{Z_{1}}$
and a skew symmetric bilinear map $X\times Y\rightarrow Z_{2},$ given
by $\alpha\otimes\beta\mapsto[\alpha,\beta]_{Z_{2}}$ $(\alpha\in X,\beta\in Y)$.

\begin{table}[H]
\begin{tabular}{|c|r@{\extracolsep{0pt}.}l|c|c|c|c|c|}
\hline 
$.$ & \multicolumn{2}{c|}{$A^{+}$} & $A^{-}$ & $C$ & $E$ & $C'$ & $E'$\tabularnewline
\hline 
\hline 
$A^{+}$ & \multicolumn{2}{c|}{$\begin{array}{c}
(A^{+},\circ)\\
(A^{-},[\ ])
\end{array}$} & $\begin{array}{c}
(A^{-},\circ)\\
(A^{+},[\ ])
\end{array}$ & $\begin{array}{c}
(C,\circ)\\
(E,[\ ])
\end{array}$ & $\begin{array}{c}
(E,\circ)\\
(C,[\ ])
\end{array}$ & $\begin{array}{c}
(C',\circ)\\
(E,[\ ])
\end{array}$ & $\begin{array}{c}
(E',\circ)\\
(C',[\ ])
\end{array}$\tabularnewline
\hline 
$A^{-}$ & \multicolumn{2}{c|}{$\begin{array}{c}
(A^{-},\circ)\\
(A^{+},[\ ])
\end{array}$} & $\begin{array}{c}
(A^{+},\circ)\\
(A^{-},[\ ])\text{ }
\end{array}$ & $\begin{array}{c}
(E,\circ)\\
(C,[\ ])
\end{array}$ & $\begin{array}{c}
(C,\circ)\\
(E,[\ ])
\end{array}$ & $\begin{array}{c}
(E',\circ)\\
(C',[\ ])
\end{array}$ & $\begin{array}{c}
(C',\circ)\\
(E',[\ ])
\end{array}$\tabularnewline
\hline 
$C$ & \multicolumn{2}{c|}{$\begin{array}{c}
(C,\circ)\\
(E,[\ ])
\end{array}$} & $\begin{array}{c}
(E,\circ)\mbox{ }\\
(C,[\ ])\text{ }
\end{array}$ & $0$ & $0$ & $\begin{array}{c}
(A^{+},\circ)\\
(A^{-},[\ ])
\end{array}$ & $\begin{array}{c}
(A^{-},\circ)\\
(A^{+},[\ ])
\end{array}$\tabularnewline
\hline 
$E$ & \multicolumn{2}{c|}{$\begin{array}{c}
(E,\circ)\\
(C,[\ ])
\end{array}$} & $\begin{array}{c}
(C,\circ)\\
(E,[\ ])
\end{array}$ & $0$ & $0$ & $\begin{array}{c}
(A^{-},\circ)\\
(A^{+},[\ ])
\end{array}$ & $\begin{array}{c}
(A^{+},\circ)\\
(A^{-},[\ ])\text{ }
\end{array}$\tabularnewline
\hline 
$C'$ & \multicolumn{2}{c|}{$\begin{array}{c}
(C',\circ)\\
(E,[\ ])
\end{array}$} & $\begin{array}{c}
(E',\circ)\\
(C',[\ ])
\end{array}$ & $\begin{array}{c}
(A^{+},\circ)\\
(A^{-},[\ ])
\end{array}$ & $\begin{array}{c}
(A^{-},\circ)\\
(A^{+},[\ ])
\end{array}$ & $0$ & $0$\tabularnewline
\hline 
$E'$ & \multicolumn{2}{c|}{$\begin{array}{c}
(E',\circ)\\
(C',[\ ])
\end{array}$} & $\begin{array}{c}
(C',\circ)\\
(E',[\ ])
\end{array}$ & $\begin{array}{c}
(A^{-},\circ)\\
(A^{+},[\ ])
\end{array}$ & $\begin{array}{c}
(A^{+},\circ)\\
(A^{-},[\ ])
\end{array}$ & $0$ & $0$\tabularnewline
\hline 
\end{tabular}

\caption{Products of homogeneous components of $\mathfrak{a}$}

\label{t4}
\end{table}

We are going to show that $\mathfrak{a}=A^{+}\oplus A^{-}\oplus C\oplus E\oplus C'\oplus E'$
is an associative algebra with respect to multiplication defined as
follows: 
\begin{equation}
\alpha_{1}\alpha_{2}:=\frac{[\alpha_{1},\alpha_{2}]}{2}+\frac{\alpha_{1}\circ\alpha_{2}}{2}\label{pro on a}
\end{equation}
 for all homogeneous $\alpha_{1},\alpha_{2}\in\mathfrak{a}$ with
the products $[\ ]$ and $\circ$ given by Table \ref{t4}. Note that
$[\alpha_{1},\alpha_{2}]=\alpha_{1}\alpha_{2}-\alpha_{2}\alpha_{1}$
and $\alpha_{1}\circ\alpha_{2}=\alpha_{1}\alpha_{2}+\alpha_{2}\alpha_{1}$.

From Table \ref{t4} and the formulas in Remark \ref{FF}, we deduce
the following.
\begin{lem}
\label{total} Let $\alpha_{1}$ and $\alpha_{2}$ be homogeneous
elements of $\mathfrak{a}$. Then
\begin{eqnarray*}
[z_{1}\otimes\alpha_{1},z_{2}\otimes\alpha_{2}] & = & z_{1}\circ z_{2}\otimes\frac{[\alpha_{1},\alpha_{2}]}{2}+[z_{1},z_{2}]\otimes\frac{\alpha_{1}\circ\alpha_{2}}{2}+(z_{1}\mid z_{2})\langle\alpha_{1},\alpha_{2}\rangle
\end{eqnarray*}
if $\alpha_{1},\alpha_{2}\in X$ with $X=A^{\pm}$ or $\alpha_{1}\in X$
and $\alpha_{2}\in X'$ with $X=C,E$. In all other cases we have
\begin{eqnarray*}
[z_{1}\otimes\alpha_{1},z_{2}\otimes\alpha_{2}] & = & z_{1}\diamond z_{2}\otimes\frac{[\alpha_{1},\alpha_{2}]}{2}+[z_{1},z_{2}]\otimes\frac{\alpha_{1}\circ\alpha_{2}}{2}.
\end{eqnarray*}
\end{lem}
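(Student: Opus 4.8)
The plan is to read the lemma off, pair of components by pair of components, from the brackets already computed in paragraphs (F1)--(F4), checking in each case that the result has exactly the shape asserted and that the case division is as stated. First I would fix the dictionary between matrix modules and components of $\mathfrak{a}$ determined by (\ref{AA}) and the matrix realisations of $\mathfrak{g},S,\Lambda,S',\Lambda'$ above: an element of $\mathfrak{g}^{+}$, $\mathfrak{g}^{-}$, $S$, $\Lambda$, $S'$, $\Lambda'$ is the matrix factor of a summand $z\otimes\alpha$ with $\alpha$ in $A^{-}$, $A^{+}$, $C$, $E$, $C'$, $E'$ respectively. Under this dictionary, and using Table \ref{t1} together with the hypotheses (\ref{eq:MainAssumptions}), the only nonzero brackets $[z_{1}\otimes\alpha_{1},z_{2}\otimes\alpha_{2}]$ with $\alpha_{1},\alpha_{2}$ homogeneous are, up to interchanging the two arguments, precisely those appearing in the displays of (F1)--(F4); all the remaining products vanish, in agreement with the zero cells of Table \ref{t4}.

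I would then separate the two cases. When $\alpha_{1},\alpha_{2}$ both lie in $A^{+}$, both lie in $A^{-}$, or form a $C$--$C'$ or an $E$--$E'$ pair (the case $X=C,E$ in the statement), the matrix modules $Z_{1},Z_{2}$ containing $z_{1},z_{2}$ form one of the pairs $\{\mathfrak{g},\mathfrak{g}\}$, $\{S,S'\}$, $\{\Lambda,\Lambda'\}$, and by Table \ref{t1} these are exactly the pairs for which $Z_{1}\otimes Z_{2}$ has a trivial summand. Here the first two lines of (F1) and the first two displays of (F4) already give the first identity of the lemma, once one observes that $z_{1}\circ z_{2}=z_{1}z_{2}+z_{2}z_{1}-\frac{2}{n+1}tr(z_{1}z_{2})I$, $[z_{1},z_{2}]=z_{1}z_{2}-z_{2}z_{1}$ and $(z_{1}\mid z_{2})=\frac{1}{n+1}tr(z_{1}z_{2})$ describe respectively the symmetric part, the antisymmetric part and the $T$-component of the $\mathfrak{g}$-module homomorphisms of (\ref{t2}) attached to that pair --- uniformly whether $z_{1},z_{2}$ are symmetric matrices, skew matrices, or elements of $\mathfrak{g}$.

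In every other nonzero case --- a mixed $\mathfrak{g}^{+}$--$\mathfrak{g}^{-}$ pair, a $\mathfrak{g}$--$S$, $\mathfrak{g}$--$\Lambda$, $\mathfrak{g}$--$S'$ or $\mathfrak{g}$--$\Lambda'$ pair, or an $S$--$\Lambda'$ or $S'$--$\Lambda$ pair --- the module $Z_{1}\otimes Z_{2}$ has no trivial summand, and the third line of (F1), all the displays of (F2)--(F3) and the last two displays of (F4) yield the second identity of the lemma, provided one uses the transpose identities recorded in (F2)--(F4) --- such as $x^{+}s+s(x^{+})^{t}=x^{+}\diamond s$, $x^{-}s+s(x^{-})^{t}=[x^{-},s]$, $s\lambda'+(s\lambda')^{t}=[s,\lambda']$ and their analogues --- to identify the symmetric and skew parts of the relevant twisted matrix product with $z_{1}\diamond z_{2}$ and $[z_{1},z_{2}]$, in the order dictated by whether $z_{1}$ lies in $\mathfrak{g}^{+}$ or $\mathfrak{g}^{-}$ (and similarly on the primed side). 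Finally, (F1)--(F4) record only one ordering of each pair, so I would extend both identities to arbitrary homogeneous $\alpha_{1},\alpha_{2}$ by invoking skew-symmetry of the Lie bracket together with the sign conventions $(\beta\circ\alpha)_{Z}=(\alpha\circ\beta)_{Z}$, $[\beta,\alpha]_{Z}=-[\alpha,\beta]_{Z}$ used to symmetrise Table \ref{t4}, and the obvious $z_{1}\circ z_{2}=z_{2}\circ z_{1}$, $z_{1}\diamond z_{2}=z_{2}\diamond z_{1}$, $[z_{1},z_{2}]=-[z_{2},z_{1}]$, $(z_{1}\mid z_{2})=(z_{2}\mid z_{1})$; these make each right-hand side manifestly symmetric under $(z_{1}\otimes\alpha_{1})\leftrightarrow(z_{2}\otimes\alpha_{2})$, so nothing new has to be computed.

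The mathematics here is shallow; the one place that needs care --- and the ``main obstacle'' --- is the bookkeeping in the $\diamond$-cases: one must track which copy of $A$ is paired with $\mathfrak{g}^{+}$ and which with $\mathfrak{g}^{-}$, and remember that the transpose-symmetric part of a product $z_{1}z_{2}$ is $z_{1}\diamond z_{2}$ when $z_{1}\in\mathfrak{g}^{+}$ but is $[z_{1},z_{2}]$ when $z_{1}\in\mathfrak{g}^{-}$, so that the slots occupied by $[\alpha_{1},\alpha_{2}]$ and by $\alpha_{1}\circ\alpha_{2}$ interchange accordingly --- exactly the phenomenon already made explicit in the identities of (F2)--(F4).
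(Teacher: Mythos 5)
Your proposal is correct and follows exactly the route the paper takes: Lemma \ref{total} is stated there with no proof beyond the remark that it is read off from Table \ref{t4} and the displays in (F1)--(F4), and your case-by-case matching (the $\circ$ plus trace-term cases being $A^{\pm}$ with itself and the $C$--$C'$, $E$--$E'$ pairs, everything else falling into the $\diamond$ form, extended to both orderings by the stated symmetry conventions) is precisely that deduction spelled out. The only nitpick is that for the mixed $\mathfrak{g}^{+}$--$\mathfrak{g}^{-}$ case the absence of the trace term comes from $tr(x^{+}x^{-})=0$ rather than from the ambient module $\mathfrak{g}\otimes\mathfrak{g}$ lacking a trivial summand (it has one), but this does not affect the argument since the relevant display in (F1) already records the correct formula.
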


\begin{thm}
\label{associati p} $\mathfrak{a}=A^{+}\oplus A^{-}\oplus C\oplus E\oplus C'\oplus E'$
is an associative algebra with identity element $1^{+}$.
\end{thm}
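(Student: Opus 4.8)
The plan is to extract the associativity of $\mathfrak{a}$ from the Jacobi identity in $L$, and the unit property of $1^{+}$ from (\ref{eq:iden 1}). The reason this should work is that, by Lemma \ref{total} together with the rewritings (F1)--(F4), the bracket of two ``matrix'' summands of $L$ mimics the commutator in $M_{n+1}(\mathbb{F})\otimes\mathfrak{a}$: for homogeneous $\alpha_{1},\alpha_{2}$ and suitable matrices $z_{1},z_{2}$ one has
\[
[z_{1}\otimes\alpha_{1},z_{2}\otimes\alpha_{2}]=z_{1}z_{2}\otimes\alpha_{1}\alpha_{2}-z_{2}z_{1}\otimes\alpha_{2}\alpha_{1}
\]
up to a scalar multiple of $I\otimes[\alpha_{1},\alpha_{2}]$ and a term in $D$, both of which disappear once the $z_{i}$ are taken trace-orthogonal. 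Hence the Jacobi identity in $L$, stripped of its matrix content, should force precisely $(\alpha_{1}\alpha_{2})\alpha_{3}=\alpha_{1}(\alpha_{2}\alpha_{3})$.

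Concretely, by bilinearity it suffices to treat homogeneous $\alpha_{1},\alpha_{2},\alpha_{3}\in\mathfrak{a}$, each in one of $A^{+},A^{-},C,E,C',E'$; write $X_{i}$ for the associated $\mathfrak{g}$-module among $\mathfrak{g}^{+},\mathfrak{g}^{-},S,\Lambda,S',\Lambda'$, so that $X_{i}\otimes\alpha_{i}\subseteq L$. Inspecting Tables \ref{t3} and \ref{t4}, most of the $6^{3}$ triples make both sides of $(\alpha_{1}\alpha_{2})\alpha_{3}=\alpha_{1}(\alpha_{2}\alpha_{3})$ vanish outright (e.g. any triple with two successive factors inside $\{C,E\}$ or inside $\{C',E'\}$, or with a subproduct $C\cdot C$, $E\cdot E$, $C'\cdot C'$, $E'\cdot E'$, $C\cdot E$ or $C'\cdot E'$), and the duality $'$ pairs up the remaining ones; so only a short list of genuinely distinct configurations has to be examined. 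For each such triple I would pick matrices $x_{i}\in X_{i}$ with $\mathrm{tr}(x_{i}x_{j})=0$ for $i\neq j$ -- this kills the $D$-valued terms $(x_{i}\mid x_{j})\langle\alpha_{i},\alpha_{j}\rangle$ -- so that, by Lemma \ref{total} and (F1)--(F4), every inner bracket $[x_{i}\otimes\alpha_{i},x_{j}\otimes\alpha_{j}]$ is a sum of terms $(\text{matrix})\otimes(\text{element of }\mathfrak{a})$ whose matrix part is a $\circ$-, $\diamond$- or $[\ ]$-product of $x_{i}$ and $x_{j}$.

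Then one expands both sides of
\[
\big[[x_{1}\otimes\alpha_{1},x_{2}\otimes\alpha_{2}],x_{3}\otimes\alpha_{3}\big]=\big[x_{1}\otimes\alpha_{1},[x_{2}\otimes\alpha_{2},x_{3}\otimes\alpha_{3}]\big]-\big[x_{2}\otimes\alpha_{2},[x_{1}\otimes\alpha_{1},x_{3}\otimes\alpha_{3}]\big]
\]
using Lemma \ref{total} and (F1)--(F4) on each bracket. The matrix coefficients appearing are iterated $\circ/\diamond/[\ ]$-products of $x_{1},x_{2},x_{3}$ in the associative algebra $M_{n+1}(\mathbb{F})$, so they satisfy the standard linearised Jordan and associator identities; matching the resulting equality term by term in the matrix variable leaves exactly the scalar relation $(\alpha_{1}\alpha_{2})\alpha_{3}=\alpha_{1}(\alpha_{2}\alpha_{3})$, provided $x_{1},x_{2},x_{3}$ can be chosen so that the matrix factor multiplying the associator is not identically zero. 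For $n\geq6$ the modules $S,\Lambda,S',\Lambda'$ are big enough that such a witness triple always exists (cf.\ Lemma \ref{Le-dimen} and Table \ref{t1}); for $n=4,5$ the exceptional tensor-product identities of Remark \ref{rem:TensorProductSmall_n} obstruct precisely the configurations ruled out by (\ref{eq:MainAssumptions}), which is why those hypotheses are assumed. This witness-finding -- together with the bookkeeping of the $\mathfrak{g}^{+}/\mathfrak{g}^{-}$ splittings, trace terms and signs in each surviving case -- is the main obstacle; it is organisational rather than conceptual, but it is where the work lies.

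It remains to identify the unit. Since $\mathfrak{g}=\mathfrak{g}\otimes1$ and $x=\frac{x+x^{t}}{2}+\frac{x-x^{t}}{2}$ for $x\in sl_{n+1}$, the distinguished element $1\in A$ splits as $1^{+}\in A^{+}$ and $1^{-}\in A^{-}$, with $x\otimes1=x\otimes1^{+}$ for $x$ symmetric and $x\otimes1=x\otimes1^{-}$ for $x$ skew. Bracketing $x\otimes1$ with any homogeneous summand $y\otimes b$ of $L$ reproduces, by (\ref{eq:iden 1}), the $\mathfrak{g}$-action $x.y\otimes b$ of Definition \ref{Lem-Vdual}; reading this through (F1)--(F4) yields, for every homogeneous $\alpha\in\mathfrak{a}$, the identities $1^{+}\circ\alpha=2\alpha$ and $[1^{+},\alpha]=0$, whence $1^{+}\alpha=\alpha1^{+}=\frac{1}{2}(1^{+}\circ\alpha)+\frac{1}{2}[1^{+},\alpha]=\alpha$. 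Thus $1^{+}$ is a two-sided identity, and combined with the associator computation above, $\mathfrak{a}$ is a unital associative algebra.
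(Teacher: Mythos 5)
Your overall strategy is the paper's: expand the Jacobi identity for elements $z_{i}\otimes\alpha_{i}$ with the $z_{i}$ pairwise trace-orthogonal, so that each bracket reduces to $z_{i}z_{j}\otimes\alpha_{i}\alpha_{j}-z_{j}z_{i}\otimes\alpha_{j}\alpha_{i}$ with no $I$-term and no $D$-term, and then read off associativity from a matrix coefficient; the unit argument via (\ref{eq:iden 1}) is likewise exactly the computation of Proposition \ref{iden}. However, two steps of your plan do not survive scrutiny. First, your case-reduction is faulty: for a triple such as $\alpha_{1}\in C$, $\alpha_{2}\in E$, $\alpha_{3}\in C'$ it is \emph{not} true that both sides of the associator vanish outright. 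Table \ref{t4} gives $(\alpha_{1}\alpha_{2})\alpha_{3}=0$, but $\alpha_{2}\alpha_{3}\in A^{+}\oplus A^{-}$ and then $\alpha_{1}(\alpha_{2}\alpha_{3})\in C\oplus E$ is a priori nonzero; that it vanishes is part of what associativity asserts and must itself be extracted from the Jacobi identity. So these configurations cannot be discarded, and the ``short list'' is longer than you suggest. Second, the existence of witnesses with a nonvanishing matrix factor in front of the associator is asserted but never exhibited, and this is the only nontrivial step of the proof.

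The paper avoids both problems by making one uniform choice that works for every homogeneous triple simultaneously: $z_{1}=E_{12}+\varepsilon_{1}E_{21}$, $z_{2}=E_{23}+\varepsilon_{2}E_{32}$, $z_{3}=E_{34}+\varepsilon_{3}E_{43}$, with each sign $\varepsilon_{i}=\pm1$ chosen so that $z_{i}$ is symmetric or skew-symmetric according to the type of $\alpha_{i}$. These are automatically pairwise trace-orthogonal, the terms involving $[z_{1},z_{3}]$ and $z_{1}\diamond z_{3}$ vanish since $z_{1}z_{3}=z_{3}z_{1}=0$, every surviving iterated product equals $E_{14}\pm\varepsilon_{1}\varepsilon_{2}\varepsilon_{3}E_{41}$, and collecting the coefficient of $E_{14}$ yields $(\alpha_{1}\alpha_{2})\alpha_{3}=\alpha_{1}(\alpha_{2}\alpha_{3})$ in a single computation; no case analysis and no rank condition beyond $n\ge3$ is needed, so ``the modules are big enough for $n\ge6$'' is not the right dividing line. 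Relatedly, the hypotheses (\ref{eq:MainAssumptions}) are consumed before this theorem, in guaranteeing that the binary products of Table \ref{t4} vanish where claimed (so that Lemma \ref{total} holds); they play no role in selecting witnesses for particular triples.
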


\begin{proof}
It will be shown in Proposition \ref{iden} that $1^{+}$ is the identity
element of a larger algebra $\mathfrak{b}$ containing $\mathfrak{a}$
as a subalgebra. Therefore we only need to prove the associativity.
Let $\alpha_{1},\alpha_{2},\alpha_{3}\mathfrak{\in a}.$ We need to
show that $\alpha_{1}(\alpha_{2}\alpha_{3})=(\alpha_{1}\alpha_{2})\alpha_{3}.$
By linearity, we can assume that $\alpha_{1},\alpha_{2}$ and $\alpha_{3}$
are homogeneous. Set $z_{1}=E_{1,2}+\varepsilon_{1}E_{2,1}$, $z_{2}=E_{2,3}+\varepsilon_{2}E_{3,2}$
and $z_{3}=E_{3,4}+\varepsilon_{3}E_{4,3}$ where $\varepsilon_{i}=\pm1$.
The signs of each $\varepsilon_{i}$ can be chosen in such a way that
$z_{i}\otimes\alpha_{i}$ belongs to the corresponding homogeneous
component of $L$. Note that $\tra(z_{i}z_{j})=0$, for all $i\neq j$.
Hence by Lemma \ref{total}, we have 
\[
[z_{i}\otimes\alpha_{i},z_{j}\otimes\alpha_{j}]=z_{i}\diamond z_{j}\otimes\frac{[\alpha_{i},\alpha_{j}]}{2}+[z_{i},z_{j}]\otimes\frac{\alpha_{i}\circ\alpha_{j}}{2}.
\]
Consider the Jacoby identity for $z_{1}\otimes\alpha_{1},z_{2}\otimes\alpha_{2},z_{3}\otimes\alpha_{3}$:
\[
[z_{1}\otimes\alpha_{1},[z_{2}\otimes\alpha_{2},z_{3}\otimes\alpha_{3}]]=[[z_{1}\otimes\alpha_{1},z_{2}\otimes\alpha_{2}],z_{3}\otimes\alpha_{3}]+[z_{2}\otimes\alpha_{2},[z_{1}\otimes\alpha_{1},z_{3}\otimes\alpha_{3}]].
\]
Using Lemma \ref{total} yields 
\begin{align}
 & 2[z_{1},[z_{2},z_{3}]]\otimes\alpha_{1}\circ(\alpha_{2}\circ\alpha_{3})+z_{1}\diamond[z_{2},z_{3}]\otimes[\alpha_{1},\alpha_{2}\circ\alpha_{3}]+[z_{1},(z_{2}\diamond z_{3})]\otimes\alpha_{1}\circ[\alpha_{2},\alpha_{3}]\label{eq:66}\\
 & +z_{1}\diamond(z_{2}\diamond z_{3})\otimes[\alpha_{1},[\alpha_{2},\alpha_{3}]]=[[z_{1},z_{2}],z_{3}]\otimes(\alpha_{1}\circ\alpha_{2})\circ\alpha_{3}+([z_{1},z_{2}]\diamond z_{3})\otimes[\alpha_{1}\circ\alpha_{2},\alpha_{3}]\nonumber \\
 & +[z_{1}\diamond z_{2},z_{3}]\otimes[\alpha_{1},\alpha_{2}]\circ\alpha_{3}+(z_{1}\circ z_{2})\circ z_{3}\otimes[[\alpha_{1},\alpha_{2}],\alpha_{3}]+[z_{2},[z_{1},z_{3}]]\otimes\alpha_{2}\circ(\alpha_{1}\circ\alpha_{3})\nonumber \\
 & +z_{2}\diamond[z_{1},z_{3}]\otimes[\alpha_{2},\alpha_{1}\circ\alpha_{3}]+[z_{2},(z_{1}\diamond z_{3})]\otimes\alpha_{2}\circ[\alpha_{1},\alpha_{3}]+z_{2}\diamond(z_{1}\diamond z_{3})\otimes[\alpha_{2},[\alpha_{1},\alpha_{3}]].\nonumber 
\end{align}
Note that $z_{1}\diamond(z_{2}\diamond z_{3})=E_{1,4}+\varepsilon_{1}\varepsilon_{2}\varepsilon_{3}E_{4,1}$,
$[z_{1},(z_{2}\diamond z_{3})]=E_{1,4}-\varepsilon_{1}\varepsilon_{2}\varepsilon_{3}E_{4,1}$,
$z_{1}\diamond[z_{2},z_{3}]=E_{1,4}-\varepsilon_{1}\varepsilon_{2}\varepsilon_{3}E_{4,1}$,
$[[z_{1},z_{2}],z_{3}]=E_{1,4}+\varepsilon_{1}\varepsilon_{2}\varepsilon_{3}E_{4,1}$,
$(z_{1}\diamond z_{2})\diamond z_{3}=E_{1,4}+\varepsilon_{1}\varepsilon_{2}\varepsilon_{3}E_{4,1}$,
$[z_{1}\diamond z_{2},z_{3}]=E_{1,4}-\varepsilon_{1}\varepsilon_{2}\varepsilon_{3}E_{4,1}$,
$[z_{1},z_{2}]\diamond z_{3}=E_{1,4}-\varepsilon_{1}\varepsilon_{2}\varepsilon_{3}E_{4,1}$
and $[z_{2},[z_{1},z_{3}]]=z_{2}\diamond(z_{1}\diamond z_{3})=[z_{2},(z_{1}\diamond z_{3})]=z_{2}\diamond[z_{1},z_{3}]=0.$
Now (\ref{eq:66}) becomes 
\begin{align*}
 & (E_{1,4}+\varepsilon_{1}\varepsilon_{2}\varepsilon_{3}E_{4,1})\otimes\alpha_{1}\circ(\alpha_{2}\circ\alpha_{3})+(E_{1,4}-\varepsilon_{1}\varepsilon_{2}\varepsilon_{3}E_{4,1})\otimes[\alpha_{1},\alpha_{2}\circ\alpha_{3}]\\
 & +(E_{1,4}-\varepsilon_{1}\varepsilon_{2}\varepsilon_{3}E_{4,1})\otimes\alpha_{1}\circ[\alpha_{2},\alpha_{3}]+(E_{1,4}+\varepsilon_{1}\varepsilon_{2}\varepsilon_{3}E_{4,1})\otimes[\alpha_{1},[\alpha_{2},\alpha_{3}]]\\
 & =(E_{1,4}+\varepsilon_{1}\varepsilon_{2}\varepsilon_{3}E_{4,1})\otimes(\alpha_{1}\circ\alpha_{2})\circ\alpha_{3}+(E_{1,4}-\varepsilon_{1}\varepsilon_{2}\varepsilon_{3}E_{4,1})\otimes[\alpha_{1}\circ\alpha_{2},\alpha_{3}]\\
 & +(E_{1,4}-\varepsilon_{1}\varepsilon_{2}\varepsilon_{3}E_{4,1})\otimes[\alpha_{1},\alpha_{2}]\circ\alpha_{3}+(E_{1,4}+\varepsilon_{1}\varepsilon_{2}\varepsilon_{3}E_{4,1})\otimes[[\alpha_{1},\alpha_{2}],\alpha_{3}].
\end{align*}
By collecting the coefficients of $E_{1,4}$ we get
\begin{eqnarray*}
 & \alpha_{1}\circ(\alpha_{2}\circ\alpha_{3})+[\alpha_{1},\alpha_{2}\circ\alpha_{3}]+\alpha_{1}\circ[\alpha_{2},\alpha_{3}]+[\alpha_{1},[\alpha_{2},\alpha_{3}]]\\
 & =(\alpha_{1}\circ\alpha_{2})\circ\alpha_{3}+[\alpha_{1}\circ\alpha_{2},\alpha_{3}]+[\alpha_{1},\alpha_{2}]\circ\alpha_{3}+[[\alpha_{1},\alpha_{2}],\alpha_{3}],
\end{eqnarray*}
or equivalently $\alpha_{1}(\alpha_{2}\alpha_{3})=(\alpha_{1}\alpha_{2})\alpha_{3},$
as required.
\end{proof}
From Theorem \ref{associati p} and tensor product decompositions
for $sl_{n}$ ($n\geq5$), we deduce the following
\begin{cor}
\label{cor A is sub} (1) $\mathcal{A}=A^{-}\oplus A^{+}$ is an associative
subalgebra of $\mathfrak{a}$ with identity element $1^{+}$.

(2) $C\oplus E$ and $C'\oplus E'$ are $\mathcal{A}$-bimodules.
\end{cor}

\begin{thm}
\label{a invol} The linear transformation $\gamma:\mathfrak{a}\rightarrow\mathfrak{a}$
defined by
\[
\gamma(a^{-})=-a^{-},\gamma(a^{+})=a^{+},\gamma(c)=-c,\gamma(e)=e,\gamma(c')=-c',\gamma(e')=e',
\]
is an antiautomorphism of order 2 of the algebra $\mathfrak{a}.$
\end{thm}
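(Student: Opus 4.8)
The plan is to verify the two requirements in turn. That $\gamma^{2}=\mathrm{id}_{\mathfrak{a}}$ is immediate, since $\gamma$ acts on each of the six homogeneous summands $A^{+},A^{-},C,E,C',E'$ of $\mathfrak{a}$ as a scalar $\pm1$. The real content is that $\gamma$ is an antiautomorphism, i.e. $\gamma(\alpha_{1}\alpha_{2})=\gamma(\alpha_{2})\gamma(\alpha_{1})$ for all $\alpha_{1},\alpha_{2}\in\mathfrak{a}$. Since $\alpha_{1}\alpha_{2}=\frac{[\alpha_{1},\alpha_{2}]}{2}+\frac{\alpha_{1}\circ\alpha_{2}}{2}$, and conversely $[\alpha_{1},\alpha_{2}]=\alpha_{1}\alpha_{2}-\alpha_{2}\alpha_{1}$ and $\alpha_{1}\circ\alpha_{2}=\alpha_{1}\alpha_{2}+\alpha_{2}\alpha_{1}$, this is equivalent to the pair of identities
\[
\gamma(\alpha_{1}\circ\alpha_{2})=\gamma(\alpha_{1})\circ\gamma(\alpha_{2})\qquad\text{and}\qquad\gamma([\alpha_{1},\alpha_{2}])=-[\gamma(\alpha_{1}),\gamma(\alpha_{2})]\qquad(\alpha_{1},\alpha_{2}\in\mathfrak{a});
\]
indeed, granting these, $\gamma(\alpha_{1}\alpha_{2})=\frac{1}{2}\bigl(-[\gamma(\alpha_{1}),\gamma(\alpha_{2})]+\gamma(\alpha_{1})\circ\gamma(\alpha_{2})\bigr)=\gamma(\alpha_{2})\gamma(\alpha_{1})$, and the converse is the same computation read backwards.

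By bilinearity it is enough to check the two identities above for homogeneous $\alpha_{1}\in X$, $\alpha_{2}\in Y$ with $X,Y$ among the six summands. Write $\gamma|_{X}=\epsilon(X)\,\mathrm{id}$, where $\epsilon(A^{+})=\epsilon(E)=\epsilon(E')=1$ and $\epsilon(A^{-})=\epsilon(C)=\epsilon(C')=-1$. If Table~\ref{t4} places $\alpha_{1}\circ\alpha_{2}$ in the summand $Z_{1}$ and $[\alpha_{1},\alpha_{2}]$ in the summand $Z_{2}$ (possibly $0$), the two identities for this pair become the numerical relations $\epsilon(Z_{1})=\epsilon(X)\epsilon(Y)$ and $\epsilon(Z_{2})=-\epsilon(X)\epsilon(Y)$. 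So the theorem reduces to checking these two sign rules for every nonzero cell of Table~\ref{t4}; this is a finite inspection (the twenty-one cells on or above the diagonal suffice, since $\circ$ is symmetric and $[\ ]$ skew, and cells producing $0$ impose nothing). For example, for $X=A^{+}$, $Y=C$ one reads off $Z_{1}=C$, $Z_{2}=E$, and checks $\epsilon(C)=-1=\epsilon(A^{+})\epsilon(C)$, $\epsilon(E)=1=-\epsilon(A^{+})\epsilon(C)$; for $X=C$, $Y=C'$ one reads off $Z_{1}=A^{+}$, $Z_{2}=A^{-}$, and checks $\epsilon(A^{+})=1=\epsilon(C)\epsilon(C')$, $\epsilon(A^{-})=-1=-\epsilon(C)\epsilon(C')$. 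The remaining cells are handled in exactly the same way.

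I do not expect any genuine difficulty here: the argument is a bookkeeping verification rather than a conceptual one. The only point that requires care is reading the $\circ$- and $[\ ]$-components of each product off Table~\ref{t4} correctly and keeping the signs consistent throughout the finite check — in particular, for the cells involving $C'$ and $E'$ it is cleanest to take the products directly from the explicit formulae (F3) rather than from the table.
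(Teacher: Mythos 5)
Your proposal is correct and takes essentially the same route as the paper: the paper also verifies $\gamma(xy)=\gamma(y)\gamma(x)$ case by case on homogeneous components, writing each product as $\frac{[\,\cdot\,,\,\cdot\,]}{2}+\frac{\,\cdot\,\circ\,\cdot\,}{2}$ and using Table \ref{t4} to locate the target summands. Your repackaging of the check as the two sign rules $\epsilon(Z_{1})=\epsilon(X)\epsilon(Y)$ and $\epsilon(Z_{2})=-\epsilon(X)\epsilon(Y)$ is just a tidier bookkeeping of the same finite verification, and your caution about reading the $C'$, $E'$ cells from (F3) rather than the table is well placed.
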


\begin{proof}
One can easily check that $\gamma(xy)=\gamma(y)\gamma(x)$ for all
homogeneous $x$ and $y$ in $\mathfrak{a}$, see \cite[Theorem 4.16]{yaseen2018generalized}.
\end{proof}

\subsection{Coordinate algebra $\mathfrak{b}$}

The aim of this subsection is to show that $\mathfrak{b}=\mathfrak{a}\oplus B\oplus B'$
is a (non-associative) algebra with identity $1^{+}$ with respect
to the multiplication extending that on $\mathfrak{a}$ given in Table
\ref{t5}. It can be shown that all products $(\beta_{1},\beta_{2})_{Z}$
with $\beta_{1},\beta_{2}\in B\oplus B'$ are either symmetric or
skew-symmetric. This is why we will write $(\beta_{1}\circ\beta_{2})_{Z}$
or $[\beta_{1},\beta_{2}]_{Z}$, respectively, instead of $(\beta_{1},\beta_{2})_{Z}$.
For $\alpha\in\mathfrak{a}$ and $\beta\in B\oplus B'$ we will write
$\alpha\beta$ (resp. $\beta\alpha$) instead of $(\alpha,\beta)_{Z}$
(resp. $(\beta,\alpha)_{Z}$). Let $b\in B$ and $b'\in B$. We define
$b\alpha:=\gamma(\alpha)b$ and $\alpha b':=b'\gamma(\alpha)$. We
will show that $B\oplus B'$ is an $\mathfrak{a}$-bimodule. Let $u\otimes b\in V\otimes B$
and $v'\otimes b'\in V'\otimes B'$. We need the following formula
from (\ref{main for}): 
\[
[u\otimes b,v'\otimes b']=(uv'^{t}-\frac{\tra(uv'^{t})}{n}I)\otimes(b,b')_{A}+\frac{2\tra(uv'^{t})}{n}\langle b,b'\rangle.
\]
By splitting $(uv'^{t}-\frac{\tra(uv'^{t})}{n}I)\otimes(b,b')_{A}$
into symmetric and skew-symmetric parts ($x\otimes a=\frac{(x+x^{t})}{2}\otimes a+\frac{(x-x^{t})}{2}\otimes a\in\mathfrak{g^{+}}\otimes A+\mathfrak{g^{-}}\otimes A$),
we get $[u\otimes b,v'\otimes b']=(uv'^{t}+v'u^{t}-\frac{2\tra(uv'^{t})}{n}I)\otimes\frac{(b,b')_{A^{-}}}{2}+(uv'^{t}-v'u^{t})\otimes\frac{(b,b')_{A^{+}}}{2}+\frac{2\tra(uv'^{t})}{n}\langle b,b'\rangle$.
Let $b,b_{1},b_{2}\in B$ and $b',b_{1}',b_{2}'\in B'$. By rewriting
some of the products in (\ref{main for}) and the above equation in
terms of symmetric and skew-symmetric elements, we get 
\begin{align}
 & [u\otimes b_{1},v\otimes b_{2}]=(uv^{t}+vu^{t})\otimes\frac{[b_{1},b_{2}]_{C}}{2}+(uv^{t}-vu^{t})\otimes\frac{(b_{1}\circ b_{2})_{E}}{2},\label{eq:formulla for natural elements}\\
 & [u'\otimes b'_{1},v'\otimes b'_{2}]=(u'v'^{t}+v'u'^{t})\otimes\frac{[b'_{1},b'_{2}]_{C'}}{2}+(u'v'^{t}-v'u'^{t})\otimes\frac{(b'_{1}\circ b'_{2})_{E'}}{2},\nonumber \\
 & [u\otimes b,v'\otimes b']=(uv'^{t}+v'u^{t}-\frac{2\tra(uv'^{t})}{n}I)\otimes\frac{[b,b']_{A^{-}}}{2}+(uv'^{t}-v'u^{t})\otimes\frac{(b\circ b')_{A^{+}}}{2}+\frac{2\tra(uv'^{t})}{n}\langle b,b'\rangle.\nonumber 
\end{align}
We define 
\begin{eqnarray}
b_{1}b_{2}:=\frac{[b_{1},b_{2}]_{C}}{2}+\frac{(b_{1}\circ b_{2})_{E}}{2}, & \  & b'_{1}b'_{2}:=\frac{[b'_{1},b'_{2}]_{C'}}{2}+\frac{(b'_{1}\circ b'_{2})_{E'}}{2},\label{pro on b}\\
bb':=\frac{[b,b']_{A^{-}}}{2}+\frac{(b\circ b')_{A^{+}}}{2}, & \  & b'b:=-\frac{[b,b']_{A^{-}}}{2}+\frac{(b\circ b')_{A^{+}}}{2}.\nonumber 
\end{eqnarray}
 Then $\mathfrak{b}=\mathfrak{a}\oplus B\oplus B'$ is an algebra
with multiplication extending that on $\mathfrak{a.}$ The following
table describes the products of homogeneous elements of $\mathfrak{b}$
(use Table \ref{t4} for the products on $\mathfrak{a}$).

\begin{table}[H]
\begin{tabular}{|c|c|c|c|c|c|}
\hline 
. & $A^{+}+A^{-}$ & $C+E$ & $C'+E'$ & $B$ & $B'$\tabularnewline
\hline 
\hline 
$A^{+}+A^{-}$ & $A^{+}+A^{-}$ & $C+E$ & $C'+E'$ & $B$ & $B'$\tabularnewline
\hline 
$C+E$ & $C+E$ & $0$ & $A^{+}+A^{-}$ & $0$ & $B$\tabularnewline
\hline 
$C'+E'$ & $C'+E'$ & $A^{+}+A^{-}$ & $0$ & $B'$ & 0\tabularnewline
\hline 
$B$ & $B$ & $0$ & $B'$ & $\begin{array}{c}
(E,\circ)\\
(C,[\ ])
\end{array}$ & $\begin{array}{c}
(A^{+},\circ)\mbox{ }\\
(A^{-},[\ ])
\end{array}$\tabularnewline
\hline 
$B'$ & $B'$ & $B$ & $0$ & $\begin{array}{c}
(A^{+},\circ)\\
(A^{-},[\ ])
\end{array}$ & $\begin{array}{c}
(E',\circ)\mbox{ }\\
(C',[\ ])
\end{array}$\tabularnewline
\hline 
\end{tabular}

\caption{Products in $\mathfrak{b}$}

\label{t5}
\end{table}

\begin{thm}
\label{b involution}The linear transformation $\eta:\mathfrak{b}\rightarrow\mathfrak{b}$
defined by $\eta(\alpha)=\gamma(\alpha)$, $\eta(b)=b$ and $\eta(b')=b'$
for all $\alpha\in\mathfrak{a}$, $b\in B$ and $b'\in B'$ is an
antiautomorphism of order $2$ of the algebra $\mathfrak{b}$.
\end{thm}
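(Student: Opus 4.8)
The plan is to verify directly the two properties defining an antiautomorphism of order $2$: that $\eta$ is an involution and that it reverses products. That $\eta$ is a linear bijection with $\eta^{2}=\mathrm{id}$ is immediate, since on $\mathfrak{a}$ it restricts to $\gamma$, which satisfies $\gamma^{2}=\mathrm{id}$ by Theorem~\ref{a invol}, and it fixes $B$ and $B'$ elementwise, so $\eta^{2}$ is the identity on each of the three summands $\mathfrak{a}$, $B$, $B'$ of $\mathfrak{b}$. For the reversal property $\eta(xy)=\eta(y)\eta(x)$ I would use bilinearity to reduce to homogeneous $x,y$ and then go through the blocks of Table~\ref{t5} according to whether each of $x$ and $y$ lies in $\mathfrak{a}$, in $B$, or in $B'$.

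The block $x,y\in\mathfrak{a}$ is exactly the statement of Theorem~\ref{a invol} (here $xy\in\mathfrak{a}$ and $\eta|_{\mathfrak{a}}=\gamma$). When exactly one of $x,y$ lies in $B\oplus B'$, the identity is essentially forced by the way the ``mirrored'' products were defined: for $\alpha\in\mathfrak{a}$ and $b\in B$, both $\alpha b$ and $b\alpha$ lie in $B\oplus B'$, hence are fixed by $\eta$, and since $b\alpha:=\gamma(\alpha)b$ and $\gamma^{2}=\mathrm{id}$ we get $\eta(\alpha b)=\alpha b=\gamma(\gamma(\alpha))b=b\gamma(\alpha)=\eta(b)\eta(\alpha)$ and, symmetrically, $\eta(b\alpha)=\gamma(\alpha)b=\eta(\alpha)\eta(b)$. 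The cases with $b$ replaced by $b'\in B'$ are handled in the same way using $\alpha b':=b'\gamma(\alpha)$.

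It remains to treat $x,y\in B\oplus B'$. For $b_{1},b_{2}\in B$ we have $b_{1}b_{2}=\frac{[b_{1},b_{2}]_{C}}{2}+\frac{(b_{1}\circ b_{2})_{E}}{2}\in C\oplus E$, so, using $\gamma(c)=-c$, $\gamma(e)=e$ together with the skew-symmetry of $[\ ]$ and the symmetry of $\circ$, $\eta(b_{1}b_{2})=-\frac{[b_{1},b_{2}]_{C}}{2}+\frac{(b_{1}\circ b_{2})_{E}}{2}=b_{2}b_{1}=\eta(b_{2})\eta(b_{1})$; the case $b_{1}',b_{2}'\in B'$ is identical with primed labels. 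For $b\in B$ and $b'\in B'$ we have $bb'=\frac{[b,b']_{A^{-}}}{2}+\frac{(b\circ b')_{A^{+}}}{2}\in A^{+}\oplus A^{-}$, and since $\gamma(a^{+})=a^{+}$, $\gamma(a^{-})=-a^{-}$ this gives $\eta(bb')=-\frac{[b,b']_{A^{-}}}{2}+\frac{(b\circ b')_{A^{+}}}{2}$, which is precisely the definition of $b'b$, i.e.\ $\eta(bb')=\eta(b')\eta(b)$; the symmetric computation gives $\eta(b'b)=bb'=\eta(b)\eta(b')$.

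Since these cases exhaust all the blocks of Table~\ref{t5}, $\eta$ is an antiautomorphism of $\mathfrak{b}$ of order $2$. No step is a genuine obstacle here—the substance has already been absorbed into Theorem~\ref{a invol}—and the only thing that needs care is the bookkeeping, namely checking that in every block the action of $\gamma$ on the relevant target space ($C$, $E$, $C'$, $E'$, or $A^{\pm}$), or the defining formula for the mirrored product, is matched correctly against the entries of Table~\ref{t5}.
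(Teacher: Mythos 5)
Your proposal is correct and follows essentially the same route as the paper: the case $x,y\in\mathfrak{a}$ is delegated to Theorem~\ref{a invol}, the mixed cases are read off from the defining conventions $b\alpha:=\gamma(\alpha)b$ and $\alpha b':=b'\gamma(\alpha)$, and the cases in $B\oplus B'$ are verified from the explicit formulas for $b_{1}b_{2}$, $b_{1}'b_{2}'$ and $bb'$ together with the signs of $\gamma$ on $C,E,C',E',A^{\pm}$. Your write-up is in fact slightly more complete than the paper's, since you also record $\eta^{2}=\mathrm{id}$ and the remaining mirrored products $\eta(b\alpha)$, $\eta(\alpha b')$ and $\eta(b'b)$ explicitly.
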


\begin{proof}
In Theorem \ref{a invol}, we showed that $\eta(xy)=\eta(y)\eta(x)$
for all $x$ and $y$ in $\mathfrak{a}$. It remains to consider the
components $B$ and $B'$. Let $b,b_{1},b_{2}\in B,b',b_{1}',b_{2}'\in B'$
and $\alpha\in\mathfrak{a}$. Then, as required,
\begin{align*}
\eta(b_{1}b_{2}) & =\eta(\frac{[b_{1},b_{2}]_{C}+(b_{1}\circ b_{2})_{E}}{2})=\frac{-[b_{1},b_{2}]_{C}+(b_{1}\circ b_{2})_{E}}{2}=b_{2}b_{1}=\eta(b_{2})\eta(b_{1}),\\
\eta(b_{1}'b_{2}') & =\eta(\frac{[b'_{1},b'_{2}]_{C'}+(b'_{1}\circ b'_{2})_{E'}}{2})=\frac{-[b'_{1},b'_{2}]_{C'}+(b'_{1}\circ b'_{2})_{E'}}{2}=b_{2}'b_{1}'=\eta(b_{2}')\eta(b_{1}'),\\
\eta(bb') & =\eta(\frac{[b,b']_{A^{-}}+(b\circ b')_{A^{+}}}{2})=\frac{-[b,b']_{A^{-}}+(b\circ b')_{A^{+}}}{2}=b'b=\eta(b')\eta(b),\\
\eta(\alpha b) & =\alpha b=b\eta(\alpha)=\eta(b)\eta(\alpha),\quad\eta(b'\alpha)=b'\alpha=\eta(\alpha)b'=\eta(\alpha)\eta(b').
\end{align*}
\end{proof}
\begin{prop}
\label{iden} $1^{+}$ is the identity element of $\mathfrak{b}$.
\end{prop}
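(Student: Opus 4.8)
The plan is to check that $1^{+}$ acts as a two-sided identity on every homogeneous component $A^{+},A^{-},C,E,C',E',B,B'$ of $\mathfrak{b}$; by bilinearity this suffices. Since by Theorem~\ref{b involution} the map $\eta$ is an antiautomorphism of $\mathfrak{b}$ fixing $1^{+}$ (because $\eta(1^{+})=\gamma(1^{+})=1^{+}$), it is in fact enough to prove $1^{+}z=z$ for each such $z$, the relation $z\cdot 1^{+}=z$ then following on applying $\eta$. For $z\in\mathcal{A}=A^{+}\oplus A^{-}$ this is immediate: by the Proposition preceding Theorem~\ref{a invol}, $\mathcal{A}\cong\tilde{\mathcal{A}}=A\oplus A^{op}$ as algebras via $\varphi$, and $A$ is unital associative with unit $1$, so $\tilde{\mathcal{A}}$ is unital with unit $1+1^{t}\in(\tilde{\mathcal{A}})^{+}$; as $\varphi(1+1^{t})=1^{+}$, the element $1^{+}$ is the identity of $\mathcal{A}$.

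Next take $z=c\in C$ (the cases $z\in E,C',E'$ are handled identically, with $\Lambda\otimes E$, $S'\otimes C'$, $\Lambda'\otimes E'$ in place of $S\otimes C$ and the matching lines of (F2)--(F3)). The idea is to evaluate the bracket $[x\otimes 1,\,s\otimes c]$ in $L$ in two ways, for a well-chosen auxiliary element. Fix a skew-symmetric $x\in\mathfrak{g}^{-}$ and a symmetric $s\in S$ with $[x,s]\neq 0$ and $x\diamond s\neq 0$; such a pair exists for $n\ge 4$, e.g. $x=E_{12}-E_{21}$ and $s=E_{13}+E_{31}$. Because $x$ is skew-symmetric, the $A$-coordinate $1$ here plays the role of $1^{+}$. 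On the one hand, by Formula~(\ref{eq:iden 1}) and Definition~\ref{Lem-Vdual},
\[
[x\otimes 1,\,s\otimes c]=(x.s)\otimes c=(xs+sx^{t})\otimes c=(xs-sx)\otimes c=[x,s]\otimes c\in S\otimes C.
\]
On the other hand, the relevant line of (F2) reads
\[
[x\otimes 1,\,s\otimes c]=(x\diamond s)\otimes\frac{[1^{+},c]_{E}}{2}+[x,s]\otimes\frac{(1^{+}\circ c)_{C}}{2},
\]
with first summand in $\Lambda\otimes E$ and second in $S\otimes C$. Comparing these two expressions inside the direct sum decomposition~(\ref{eq:drezh}) of $L$, and using $x\diamond s\neq 0$ and $[x,s]\neq 0$, we obtain $[1^{+},c]_{E}=0$ and $(1^{+}\circ c)_{C}=2c$; hence, by Table~\ref{t4}, $1^{+}c=\tfrac{1}{2}(1^{+}\circ c)_{C}+\tfrac{1}{2}[1^{+},c]_{E}=c$.

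For $z=b\in B$, pick a skew-symmetric $x\in\mathfrak{g}^{-}$ and $u\in V$ with $xu\neq 0$. By~(\ref{eq:iden 1}) and Definition~\ref{Lem-Vdual}, $[x\otimes 1,\,u\otimes b]=(x.u)\otimes b=xu\otimes b$, whereas~(\ref{main for}) gives $[x\otimes 1,\,u\otimes b]=xu\otimes(1^{+},b)_{B}$; comparing, $1^{+}b=(1^{+},b)_{B}=b$. For $z=b'\in B'$ one has $1^{+}b'=b'\gamma(1^{+})=b'\cdot 1^{+}$, so it suffices to show $b'\cdot 1^{+}=b'$: with skew-symmetric $x$ and $u'$ such that $x^{t}u'\neq 0$, (\ref{eq:iden 1}) and Definition~\ref{Lem-Vdual} give $[u'\otimes b',\,x\otimes 1]=-[x\otimes 1,\,u'\otimes b']=-(x.u')\otimes b'=x^{t}u'\otimes b'$, while~(\ref{main for}) gives $[u'\otimes b',\,x\otimes 1]=x^{t}u'\otimes(b',1^{+})_{B'}$; hence $(b',1^{+})_{B'}=b'$, i.e. $b'\cdot 1^{+}=b'$. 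As $A^{+},A^{-},C,E,C',E',B,B'$ span $\mathfrak{b}$, this shows $1^{+}$ is its identity.

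The computations are routine; the single point needing care is the choice of symmetry type (skew-symmetric above) of the auxiliary matrix in $\mathfrak{g}$, so that after rewriting $[x\otimes 1,\,-]$ via~(\ref{eq:iden 1}) and matching it against the appropriate line of (F1)--(F4)/(\ref{main for}), the two resulting terms land in \emph{distinct} isotypic summands of $L$ (here $S\otimes C$ and $\Lambda\otimes E$). That separation is exactly what allows one to read off at once that the $\circ$-product of $1^{+}$ with the coordinate equals twice that coordinate while the $[\ ]$-product vanishes, which is the statement $1^{+}z=z$.
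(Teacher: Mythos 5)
Your proof is correct and follows essentially the same route as the paper: evaluate $[x\otimes 1,\,z\otimes\beta]$ both via (\ref{eq:iden 1}) and via the structure formulas (F1)--(F4)/(\ref{main for}), separate the resulting isotypic components to read off $(1^{+}\circ\beta)=2\beta$ and $[1^{+},\beta]=0$, hence $1^{+}\beta=\beta$, and then obtain the right-sided identity from the involution $\eta$. The only (harmless) deviations are that on $\mathcal{A}=A^{+}\oplus A^{-}$ you invoke the isomorphism $\mathcal{A}\cong A\oplus A^{op}$ together with the unitality of $A$ instead of running the same bracket computation there, and that you make the auxiliary matrices explicit where the paper leaves them generic.
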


\begin{proof}
Using (\ref{main for}) and (\ref{eq:iden 1}) and the fact that $\circ$
is symmetric, $[\,,\,]$ is skew symmetric and $\eta(1^{+})=1^{+}$
one can check that $1^{+}$ is the identity element of $\mathfrak{b}$,
see \cite[Proposition 4.2.2]{yaseen2018generalized}
\end{proof}
Using (\ref{main for}) and Table \ref{t5}, we get the following.
\begin{lem}
\label{bb'} Let $b\in B$, $b'\in B'$ and $\alpha\in\mathfrak{a}$.
Then 
\[
[z\otimes\alpha,u\otimes b]=zu\otimes\alpha b\text{ \ and \ }[u'\otimes b',z\otimes\alpha]=z^{t}u'\otimes b'\alpha.
\]
\end{lem}

\begin{prop}
\label{b is A module}$B\oplus B'$ is an $\mathfrak{a}$-bimodule.
\end{prop}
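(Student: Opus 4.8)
The plan is to derive the bimodule identities from the Jacobi identity in $L$, in the same way that the associativity of $\mathfrak{a}$ was obtained in Theorem \ref{associati p}. Put $M:=B\oplus B'$. By bilinearity it suffices to verify, for homogeneous $\alpha,\beta\in\mathfrak{a}$ and homogeneous $m\in M$, the associativity relations $(\alpha\beta)m=\alpha(\beta m)$, $(\alpha m)\beta=\alpha(m\beta)$ and $(m\alpha)\beta=m(\alpha\beta)$ among the products of Table \ref{t5}. Using the relations $m\alpha=\gamma(\alpha)m$ and $\alpha m=m\gamma(\alpha)$ for $m\in M$ (which hold by the definition of the right action on $B$ and the left action on $B'$, since $\gamma$ is an involution by Theorem \ref{a invol}), together with the fact that $\eta$ is an antiautomorphism of $\mathfrak{b}$ fixing $M$ pointwise (Theorem \ref{b involution}), the third relation is seen to be equivalent to the first after substituting $\gamma(\beta),\gamma(\alpha)$ for $\alpha,\beta$; so it is enough to treat the first two.

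For each of these, realise $m$ as $u\otimes m\in V\otimes B$ (resp. $V'\otimes B'$) with $u$ a standard basis column vector, and apply the Jacobi identity of $L$ to the triple $z_1\otimes\alpha,\ z_2\otimes\beta,\ u\otimes m$. Here $z_1$ and $z_2$ are $(n+1)\times(n+1)$ matrices of the form $E_{pq}+\varepsilon E_{qp}$, with the pattern of indices and the signs $\varepsilon\in\{\pm1\}$ chosen exactly as in the proof of Theorem \ref{associati p}: the signs so that $z_1\otimes\alpha$ and $z_2\otimes\beta$ land in the homogeneous components of $L$ attached to $\alpha$ and $\beta$, and the indices (together with $u$) so that $tr(z_1 z_2)=0$ — which, by Lemmas \ref{total} and \ref{bb'}, kills all $D$-valued contributions — and so that every matrix–vector product surviving in the three double brackets is a scalar multiple of one fixed basis vector. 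Expanding by Lemmas \ref{total} and \ref{bb'} and reading off the coefficient of that vector then gives the desired identity (the products $\frac{1}{2}[\alpha,\beta]\,m+\frac{1}{2}(\alpha\circ\beta)\,m$ collapse to $(\alpha\beta)m$ directly from the definition of the product on $\mathfrak{a}$).

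The main work, and the main obstacle, is the case analysis. Because the left action of $C'\oplus E'$ sends $B$ into $B'$ and that of $C\oplus E$ sends $B'$ into $B$, and because the $\mathfrak{g}$-action on $V'$ carries a transpose, the index pattern that produces the clean collapse of the matrix–vector coefficients is different in the various cases (according to whether $m\in B$ or $m\in B'$, which homogeneous components $\alpha$ and $\beta$ occupy, and the position of $u\otimes m$ in the triple). One has to check that a consistent choice of signs $\varepsilon$ is always available and that the collapse indeed occurs. For $n=4,5$ the conditions (\ref{eq:MainAssumptions}) are exactly what is needed to ensure that no spurious components appear in the intermediate brackets (for example that $[\Lambda\otimes E,V\otimes B]=0$, so that $e\,b=0$). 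With the cases set up, each individual check is a short computation of the same shape as the one in Theorem \ref{associati p}, and no new idea is required.
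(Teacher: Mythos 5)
Your overall strategy is the paper's own: apply the Jacobi identity to triples $z_{1}\otimes\alpha,\ z_{2}\otimes\beta,\ u\otimes m$ with $z_{i}=E_{pq}+\varepsilon E_{qp}$ chosen so that $tr(z_{1}z_{2})=0$ and the surviving matrix--vector products collapse onto one basis vector, then use the involution to transfer identities between the two one-sided actions. Indeed the paper takes $z_{1}=E_{12}+\varepsilon_{1}E_{21}$, $z_{2}=E_{23}+\varepsilon_{2}E_{32}$, $u=u'=e_{3}$, proves $(\alpha_{1}\alpha_{2})b=\alpha_{1}(\alpha_{2}b)$ and $(b'\alpha_{2})\alpha_{1}=b'(\alpha_{2}\alpha_{1})$ this way, and obtains $b(\alpha_{1}\alpha_{2})=(b\alpha_{1})\alpha_{2}$ and $(\alpha_{1}\alpha_{2})b'=\alpha_{1}(\alpha_{2}b')$ by applying $\eta$; your reduction of $(m\alpha)\beta=m(\alpha\beta)$ to $(\alpha\beta)m=\alpha(\beta m)$ via $\gamma$ is the same device. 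For these outer associativity laws your plan is sound and matches the paper.

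The gap is your second relation, $(\alpha m)\beta=\alpha(m\beta)$. It is not ``a computation of the same shape'', and no Jacobi identity for a triple $z_{1}\otimes\alpha,\ z_{2}\otimes\beta,\ u\otimes m$ can produce it: for $m\in B$ the only action realised by a Lie bracket is the left one, $[z\otimes\alpha,u\otimes b]=zu\otimes\alpha b$, so every term the Jacobi identity yields has the form $\alpha(\beta b)$, $\beta(\alpha b)$ or $(\cdots)b$, and the product $b\beta$ --- which is \emph{defined} as $\gamma(\beta)b$ --- never occurs. Unfolding the definitions, the relation reads $\gamma(\beta)(\alpha b)=\alpha(\gamma(\beta)b)$, i.e.\ that left multiplications by elements of $\mathfrak{a}$ commute on $B$. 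That fails in general: by Case 2 of Proposition \ref{derivation rule} it would force $[\mathfrak{a},\mathfrak{a}]$ and $<\mathfrak{a},\mathfrak{a}>$ to annihilate $B$, and concretely, taking $\alpha=c'\in C'$, $\beta=c\in C$, $m=b\in B$ one gets $\alpha(m\beta)=-c'(cb)=0$ because $C\cdot B=0$ (Table \ref{t5}), whereas $(c'b)c\in(C'\cdot B)\cdot C\subseteq B$ is nonzero already for $L=sp_{2(n+1)+2}$. The paper's proof establishes only the four one-sided module laws above and never asserts this interchange law; you should drop it from your list (or treat the ``bimodule'' claim in that weaker sense) rather than claim it follows from the same Jacobi computation.
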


\begin{proof}
Let $b\in B,b'\in B'$ and let $\alpha_{1},\alpha_{2}$ be homogeneous
elements in $\mathfrak{a}$. Set 
\[
z_{1}=E_{1,2}+\varepsilon_{1}E_{2,1},\ z_{2}=E_{2,3}+\varepsilon_{2}E_{3,2}\text{ and }u=u'=e_{3}\text{ where }\varepsilon_{i}=\pm1.
\]
 Then $[z_{1},z_{2}]=E_{1,3}-\varepsilon_{1}\varepsilon_{2}E_{3,1}$,
$z_{1}\circ z_{2}=E_{1,3}+\varepsilon_{1}\varepsilon_{2}E_{3,1}$,
$z_{1}z_{2}=E_{1,3}$ and $(z_{1}|z_{2})=0$.

First we are going to show that $(\alpha_{1}\alpha_{2})b=\alpha_{1}(\alpha_{2}b)$.
Consider the Jacoby identity for $z_{1}\otimes\alpha_{1},z_{2}\otimes\alpha_{2},u\otimes b$:
\[
[z_{1}\otimes\alpha_{1},[z_{2}\otimes\alpha_{2},u\otimes b]]=[[z_{1}\otimes\alpha_{1},z_{2}\otimes\alpha_{2}],u\otimes b]+[z_{2}\otimes\alpha_{2},[z_{1}\otimes\alpha_{1},u\otimes b]].
\]
Using Lemmas \ref{bb'} and \ref{total}, we get 
\[
z_{1}(z_{2}u)\otimes\alpha_{1}(\alpha_{2}b)-(z_{1}\circ z_{2})u\otimes\frac{[\alpha_{1},\alpha_{2}]}{2}b-[z_{1},z_{2}]u\otimes\frac{\alpha_{1}\circ\alpha_{2}}{2}b=0.
\]
Substituting matrix units, we get that $e_{1}\otimes(\alpha_{1}(\alpha_{2}b)-\frac{[\alpha_{1},\alpha_{2}]}{2}b-\frac{\alpha_{1}\circ\alpha_{2}}{2}b)=0,$
so $\alpha_{1}(\alpha_{2}b)=\frac{[\alpha_{1},\alpha_{2}]}{2}b+\frac{\alpha_{1}\circ\alpha_{2}}{2}b=(\alpha_{1}\alpha_{2})b$,
as required. Now we are going to show that $(b'\alpha_{2})\alpha_{1}=b'(\alpha_{2}\alpha_{1})$.
Consider the Jacoby identity for $z_{1}\otimes\alpha_{1},z_{2}\otimes\alpha_{2},u'\otimes b'$:
\[
[z_{1}\otimes\alpha_{1},[z_{2}\otimes\alpha_{2},u'\otimes b']]=[[z_{1}\otimes\alpha_{1},z_{2}\otimes\alpha_{2}],u'\otimes b']+[z_{2}\otimes\alpha_{2},[z_{1}\otimes\alpha_{1},u'\otimes b']].
\]
Using Lemmas \ref{total} and \ref{bb'}, we get 
\[
(z_{2}z_{1})^{t}u'\otimes(b'\alpha_{2})\alpha_{1}=-(z_{1}\circ z_{2})^{t}u'\otimes b'\frac{[\alpha_{1},\alpha_{2}]}{2}-[z_{1},z_{2}]^{t}u'\otimes b'\frac{\alpha_{1}\circ\alpha_{2}}{2}.
\]
Substituting matrix units, we get that $\varepsilon_{1}\varepsilon_{2}e_{1}\otimes(b'\alpha_{2})\alpha_{1}=-\varepsilon_{1}\varepsilon_{2}e_{1}\otimes b'\frac{[\alpha_{1},\alpha_{2}]}{2}+b'\frac{\alpha_{1}\circ\alpha_{2}}{2},$
so $(b'\alpha_{2})\alpha_{1}=b'(\alpha_{2}\alpha_{1}),$ as required.
It remains to show $b(\alpha_{1}\alpha_{2})=(b\alpha_{1})\alpha_{2}$
and $(\alpha_{1}\alpha_{2})b'=\alpha_{1}(\alpha_{2}b')$. We have
\[
b(\alpha_{1}\alpha_{2})=\eta((\eta(\alpha_{2})\eta(\alpha_{1}))\eta(b))=\eta(\eta(\alpha_{2})(\eta(\alpha_{1})\eta(b)))=\eta(\eta(\alpha_{2})\eta((b\alpha_{1})))=(b\alpha_{1})\alpha_{2}.
\]
Similarly, we get $(\alpha_{1}\alpha_{2})b'=\alpha_{1}(\alpha_{2}b')$,
as required.
\end{proof}
Note that both $B$ and $B'$ are invariant under multiplication by
$\mathcal{\mathcal{A}}=A^{+}\oplus A^{-}$, see Table \ref{t5}, so
we get the following.
\begin{cor}
\label{cor B is A (sub) }$B$ and $B'$ are $\mathcal{\mathcal{A}}$-bimodules.
\end{cor}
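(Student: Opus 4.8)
The plan is to deduce this directly from Proposition \ref{b is A module} together with the block structure recorded in Table \ref{t5}, so the argument is essentially one of restriction of scalars. First I would note that, by Corollary \ref{cor A is sub}, $\mathcal{A} = A^- \oplus A^+$ is an associative subalgebra of $\mathfrak{a}$; hence any $\mathfrak{a}$-bimodule is automatically an $\mathcal{A}$-bimodule by restricting the left and right actions. By Proposition \ref{b is A module}, $B \oplus B'$ is an $\mathfrak{a}$-bimodule, so it is an $\mathcal{A}$-bimodule.

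Next I would check that each of $B$ and $B'$ is invariant under left and right multiplication by $\mathcal{A}$. This can be read off Table \ref{t5}: the entries in the row and column labelled $A^+ + A^-$ give $(A^+ + A^-)B \subseteq B$ and $B(A^+ + A^-) \subseteq B$, and likewise $(A^+ + A^-)B' \subseteq B'$ and $B'(A^+ + A^-) \subseteq B'$. (Equivalently, this follows from the defining relations $b\alpha = \gamma(\alpha)b$ and $\alpha b' = b'\gamma(\alpha)$ together with $\eta$-invariance of $\mathcal{A}$, or simply from the fact that $V \otimes B$ and $V' \otimes B'$ sit in distinct isotypic components of the $\mathfrak{g}$-module $L$, so products of $\mathfrak{g} \otimes A$ with these components cannot mix them.)

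Finally, since $B$ and $B'$ are subspaces of the $\mathcal{A}$-bimodule $B \oplus B'$ that are stable under the $\mathcal{A}$-action on both sides, and since $B \oplus B'$ is their vector-space direct sum, each of $B$ and $B'$ inherits the bimodule axioms and is thus an $\mathcal{A}$-bimodule in its own right. There is no genuine obstacle here: the only point requiring verification is the separate $\mathcal{A}$-invariance of the two summands, which is immediate from the table.
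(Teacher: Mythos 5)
Your proposal is correct and follows essentially the same route as the paper: the authors likewise obtain the corollary by combining Proposition \ref{b is A module} with the observation, read off from Table \ref{t5}, that $B$ and $B'$ are each invariant under multiplication by $\mathcal{A}$. Your extra remarks (restriction of scalars via Corollary \ref{cor A is sub}, and the isotypic-component justification of the invariance) merely make explicit what the paper leaves implicit.
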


\begin{prop}
\label{mod B} Let $\chi(\beta_{1},\beta_{2}):=\beta_{1}\beta_{2}$
for all $\beta_{1},\beta_{2}\in B\oplus B'$. Then $\chi$ is a hermitian
form on the $\mathfrak{a}$-bimodule $B\oplus B'$ with values in
$\mathfrak{a}$. More exactly, for all $\alpha\in\mathfrak{a}$ and
$\beta_{1},\beta_{2}\in B\oplus B'$ we have

$(i)\ \chi(\alpha\beta_{1},\beta_{2})=\alpha\chi(\beta_{1},\beta_{2})$,

$(ii)\ \eta(\chi(\beta_{1},\beta_{2}))=\chi(\beta_{2},\beta_{1})$,

$(iii)\ \chi(\beta_{1},\alpha\beta_{2})=\chi(\beta_{1},\beta_{2})\eta(\alpha)$.
\end{prop}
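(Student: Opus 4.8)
Since $\chi(\beta_{1},\beta_{2})$ is by definition the product $\beta_{1}\beta_{2}$ computed in the algebra $\mathfrak{b}$, and this product lands in $\mathfrak{a}$ (Table \ref{t5}), the three assertions are identities inside $\mathfrak{b}$: assertion $(i)$ reads $(\alpha\beta_{1})\beta_{2}=\alpha(\beta_{1}\beta_{2})$, assertion $(ii)$ reads $\eta(\beta_{1}\beta_{2})=\beta_{2}\beta_{1}$, and assertion $(iii)$ reads $\beta_{1}(\alpha\beta_{2})=(\beta_{1}\beta_{2})\eta(\alpha)$. The plan is to obtain $(ii)$ at once from the involution $\eta$ on $\mathfrak{b}$, to establish $(i)$ by the Jacobi-identity technique already used in the proofs of Theorem \ref{associati p} and Proposition \ref{b is A module}, and then to deduce $(iii)$ formally from $(i)$ and $(ii)$.

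Assertion $(ii)$ is immediate: by Theorem \ref{b involution}, $\eta$ is an antiautomorphism of $\mathfrak{b}$ which fixes $B$ and $B'$ pointwise, so $\eta(\beta)=\beta$ for every $\beta\in B\oplus B'$; hence $\eta(\chi(\beta_{1},\beta_{2}))=\eta(\beta_{1}\beta_{2})=\eta(\beta_{2})\eta(\beta_{1})=\beta_{2}\beta_{1}=\chi(\beta_{2},\beta_{1})$. Granting $(i)$, assertion $(iii)$ follows by using $(ii)$, then $(i)$ with $\beta_{1}$ and $\beta_{2}$ interchanged, then the fact that $\eta$ restricts on $\mathfrak{a}$ to the antiautomorphism $\gamma$ (Theorem \ref{a invol}) applied to the product $\alpha\,\chi(\beta_{2},\beta_{1})$ of elements of $\mathfrak{a}$, and $(ii)$ once more:
\[
\chi(\beta_{1},\alpha\beta_{2})=\eta(\chi(\alpha\beta_{2},\beta_{1}))=\eta(\alpha\,\chi(\beta_{2},\beta_{1}))=\eta(\chi(\beta_{2},\beta_{1}))\,\eta(\alpha)=\chi(\beta_{1},\beta_{2})\,\eta(\alpha).
\]

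It therefore remains to prove $(i)$. By bilinearity we may assume $\alpha,\beta_{1},\beta_{2}$ homogeneous, with each $\beta_{i}$ lying in $B$ or in $B'$. Realise $\alpha$ inside $L$ as $z\otimes\alpha$, where $z$ is a suitable matrix built from a few matrix units with small indices, chosen symmetric, skew, or arbitrary according to which of $A^{\pm},C,E,C',E'$ contains $\alpha$, and realise $\beta_{1},\beta_{2}$ as $w_{1}\otimes\beta_{1},\ w_{2}\otimes\beta_{2}$ with $w_{1},w_{2}$ suitable standard basis vectors of $V$ or $V'$; one arranges these choices so that all three tensors lie in the correct homogeneous components of $L$, the trace terms occurring in (\ref{main for}) vanish, and the matrix products that appear collapse to single matrix units. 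Expanding the Jacobi identity
\[
[z\otimes\alpha,[w_{1}\otimes\beta_{1},w_{2}\otimes\beta_{2}]]=[[z\otimes\alpha,w_{1}\otimes\beta_{1}],w_{2}\otimes\beta_{2}]+[w_{1}\otimes\beta_{1},[z\otimes\alpha,w_{2}\otimes\beta_{2}]]
\]
by means of the multiplication formulas (\ref{main for}), Lemma \ref{bb'} and Lemma \ref{total}, and comparing the coefficients of a fixed matrix unit, one reads off the identity $(\alpha\beta_{1})\beta_{2}=\alpha(\beta_{1}\beta_{2})$. I expect the main obstacle to be organisational rather than conceptual: one has to run through the cases according to which isotypic components contain $\alpha,\beta_{1},\beta_{2}$, choosing in each case auxiliary matrices and basis vectors that keep the Jacobi identity nondegenerate and that make the translation between Lie brackets and the symmetric/skew notation of (F1)--(F4) consistent; the cases with $\alpha\in C\oplus E$ or $\alpha\in C'\oplus E'$, where one side of $(i)$ may vanish for formal reasons (e.g.\ $[z\otimes\alpha,w_{1}\otimes\beta_{1}]=0$), need particular care, but in each case the verification reduces to a short matrix computation.
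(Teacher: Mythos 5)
Your overall strategy coincides with the paper's: (iii) is deduced from (i) and (ii) by exactly the chain of equalities you write, and (i) is proved by embedding the elements into $L$ as tensors, expanding the Jacobi identity via (\ref{main for}), Lemma \ref{bb'} and Lemma \ref{total}, and comparing coefficients of a matrix unit. For (ii) you are slightly more economical than the paper: you invoke Theorem \ref{b involution} (where $\eta(b_1b_2)=b_2b_1$, $\eta(b_1'b_2')=b_2'b_1'$ and $\eta(bb')=b'b$ were already verified), whereas the paper re-runs those four computations; both are fine.

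The one place where your sketch of (i) would not go through verbatim is the handling of the components in $B'$. The left action of $\mathfrak{a}$ on $B'$ is not read off directly from the Lie bracket: by definition $\alpha b':=b'\gamma(\alpha)$, and Lemma \ref{bb'} shows that the bracket $[u'\otimes b',z\otimes\alpha]=z^{t}u'\otimes b'\alpha$ produces the \emph{right} action. Consequently, expanding the Jacobi identity for, say, $z\otimes\alpha$, $u_1'\otimes b_1'$, $u_2'\otimes b_2'$ yields the right-module identity $b_2'(b_1'\alpha)=(b_2'b_1')\alpha$, not $(\alpha b_1')b_2'=\alpha(b_1'b_2')$; one must then apply $\eta$ (using that $\eta$ fixes $B$ and $B'$ and restricts to $\gamma$ on $\mathfrak{a}$) to convert it into the left-action statement required by (i). The same conversion is needed for the mixed case $(b_2b_1')\alpha=b_2(b_1'\alpha)\Rightarrow\alpha(b_1'b_2)=(\alpha b_1')b_2$. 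This is exactly what the paper does in its Cases for $B'$, and it is the concrete content hiding behind your phrase ``need particular care''; since you already use the $\eta$-conversion trick in your derivation of (iii), the idea is available to you, but as written part (i) remains a plan rather than a proof: the case-by-case matrix computations (and in particular this $\eta$-twist for $B'$) still have to be carried out.
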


\begin{proof}
(i) We need to show that $(\alpha\beta_{1})\beta_{2}=\alpha(\beta_{1}\beta_{2})$
for all homogeneous $\beta_{1}$, $\beta_{2}$ in $B\oplus B'$ and
$\alpha\in\mathfrak{a}$. Set $z=E_{1,2}+\varepsilon E_{2,1}$, $u_{1}=u_{1}'=e_{1}$
and $u_{2}=u_{2}'=e_{3}$ where $\varepsilon=\pm1$. Let $b_{1},b_{2}\in B$
and $b_{1}',b_{2}'\in B'$. First we are going to show that $\alpha(b_{1}b_{2})=(\alpha b_{1})b_{2}$.
Consider the Jacoby identity for $z\otimes\alpha$, $u_{1}\otimes b_{1}$,
$u_{2}\otimes b_{2}$:
\[
[z\otimes\alpha,[u_{1}\otimes b_{1},u_{2}\otimes b_{2}]]=[[z\otimes\alpha,u_{1}\otimes b_{1}],u_{2}\otimes b_{2}]+[u_{1}\otimes b_{1},[z\otimes\alpha,u_{2}\otimes b_{2}]].
\]
Using (\ref{eq:formulla for natural elements}) and Lemma \ref{bb'}
we get
\[
[z\otimes\alpha,(E_{1,3}+E_{3,1})\otimes\frac{1}{2}[b_{1},b_{2}]_{C}+[z\otimes\alpha,(E_{1,3}-E_{3,1})\otimes\frac{1}{2}(b_{1}\circ b_{2})_{E}]=[\varepsilon e_{2}\otimes\alpha b_{1},u_{2}\otimes b_{2}].
\]
By using Lemma \ref{total} and (\ref{eq:formulla for natural elements}),
we get
\begin{align*}
 & (\varepsilon E_{2,3}+\varepsilon E_{3,2})\otimes[\alpha,[b_{1},b_{2}]_{C}]+(\varepsilon E_{2,3}-\varepsilon E_{3,2})\otimes\alpha\circ[b_{1},b_{2}]_{C}+(\varepsilon E_{2,3}+\varepsilon E_{3,2})\otimes[\alpha,(b_{1}\circ b_{2})_{E}]\\
 & +(\varepsilon E_{2,3}-\varepsilon E_{3,2})\otimes\alpha\circ(b_{1}\circ b_{2})_{E}=(\varepsilon E_{2,3}+\varepsilon E_{3,2})\otimes[\alpha b_{1},b_{2}]+(\varepsilon E_{2,3}-\varepsilon E_{3,2})\otimes\alpha b_{1}\circ b_{2}.
\end{align*}
By collecting the coefficients of $E_{2,3}$, we get:
\[
[\alpha,[b_{1},b_{2}]_{C}]+\alpha\circ[b_{1},b_{2}]_{C}+[\alpha,(b_{1}\circ b_{2})_{E}]+\alpha\circ(b_{1}\circ b_{2})_{E}=[\alpha b_{1},b_{2}]+\alpha b_{1}\circ b_{2},
\]
 or equivalently $\alpha(b_{1}b_{2})=(\alpha b_{1})b_{2}$, as required.
Similarly, we get $\alpha(b_{1}b_{2}')=(\alpha b_{1})b_{2}'$ and
$b_{2}'(b_{1}'\alpha)=(b_{2}'b_{1}')\alpha$ (by using the Jacoby
identity for $z\otimes\alpha,u_{1}\otimes b_{1},u_{2}'\otimes b_{2}'$
and $z\otimes\alpha$, $u_{1}'\otimes b_{1}'$, $u_{2}'\otimes b_{2}'$,
respectively). By applying $\eta$ to both sides of the last equation
and using the fact that $\eta$ is identity on both $B$ and $B'$,
we get $(\eta(\alpha)b_{1}')b_{2}'=\eta(\alpha)(b_{1}'b_{2}')$, or
equivalently $(\alpha b_{1}')b_{2}'=\alpha(b_{1}'b_{2}')$, as required.
By using the Jacoby identity for $z\otimes\alpha,u_{1}\otimes b_{1},u_{2}'\otimes b_{2}'$
we get $(b_{2}b_{1}')\alpha=b_{2}(b_{1}'\alpha)$. By applying $\eta$
we get $\eta(\alpha)(b_{1}'b_{2})=(\eta(\alpha)b_{1}')b_{2}$, or
equivalently $\alpha(b_{1}'b_{2})=(\alpha b_{1}')b_{2},$ as required.

$(ii)$ We only need to check this for homogeneous elements. We have,
as required,
\begin{align*}
\eta(\chi(b_{1},b_{2})) & =\eta(\frac{[b_{1},b_{2}]_{C}+(b_{1}\circ b_{2})_{E}}{2})=\frac{-[b_{1},b_{2}]_{C}+(b_{1}\circ b_{2})_{E}}{2}=\chi(b_{2},b_{1}),\\
\eta(\chi(b_{1}',b_{2}')) & =\eta(\frac{[b_{1}',b_{2}']_{C'}+(b_{1}'\circ b_{2}')_{E'}}{2})=\frac{-[b_{1}',b_{2}']_{C'}+(b_{1}'\circ b_{2}')_{E'}}{2}=\chi(b_{2}',b_{1}'),\\
\eta(\chi(b_{1},b_{1}')) & =\eta(\frac{[b_{1},b_{1}']_{A^{-}}+(b_{1}\circ b_{1}')_{A^{+}}}{2})=\frac{-[b_{1},b_{1}']_{A^{-}}+(b_{1}\circ b_{1}')_{A^{+}}}{2}=\chi(b_{1}',b_{1}),\\
\eta(\chi(b_{1}',b_{1})) & =\eta(\frac{[b_{1}',b_{1}]_{A^{-}}+(b_{1}'\circ b_{1})_{A^{+}}}{2})=\frac{-[b_{1}',b_{1}]_{A^{-}}+(b_{1}'\circ b_{1})_{A^{+}}}{2}=\chi(b_{1},b_{1}'),
\end{align*}

$(iii)$ Using $(i)$ and $(ii)$, we get, as required, 
\[
\chi(\beta_{1},\alpha\beta_{2})=\eta(\chi(\alpha\beta_{2},\beta_{1}))=\eta(\alpha\chi(\beta_{2},\beta_{1}))=\eta(\chi(\beta_{2},\beta_{1}))\eta(\alpha)=\chi(\beta_{1},\beta_{2})\eta(\alpha).
\]
\end{proof}
The mapping $\langle,\rangle:X\otimes X'\rightarrow D$ with $X=B,C,E$
can be extended to $X'\otimes X$ in a consistent way by defining
$\langle x',x\rangle:=-\langle x,x'\rangle$. Let $X,Y\in\{A^{+},A^{-},B,B',C,C',E,E'\}$.
Recall also the maps $\langle,\rangle:A^{\pm}\otimes A^{\pm}\rightarrow D$
described previously (see Remark \ref{FF}(a)). For the convenience,
we extend the mappings to the whole space $\mathfrak{b}$ by defining
the remaining $\langle X,Y\rangle$ to be zero. Hence $\langle\mathfrak{b},\mathfrak{b}\rangle=\langle A^{+},A^{+}\rangle+\langle A^{-},A^{-}\rangle+\langle B,B'\rangle+\langle C,C'\rangle+\langle E,E'\rangle.$
It follows from condition $(\Gamma3)$ in Definition \ref{def of gamma}
that 
\begin{equation}
D=\langle\mathfrak{b},\mathfrak{b}\rangle=\langle A^{+},A^{+}\rangle+\langle A^{-},A^{-}\rangle+\langle B,B'\rangle+\langle C,C'\rangle+\langle E,E'\rangle.\label{eq:kk}
\end{equation}

\begin{prop}
\label{derivation rule} Let $\alpha_{1},\alpha_{2}$ and $\alpha_{3}$
be homogeneous elements in $\mathfrak{b}$ with $\langle\alpha_{1},\alpha_{2}\rangle\neq0$.
Then
\begin{equation}
\langle\alpha_{1},\alpha_{2}\rangle\alpha_{3}=\begin{cases}
[[\alpha_{1},\alpha_{2}]_{A^{-}},\alpha_{3}] & \text{ if }\alpha_{1},\alpha_{2},\alpha_{3}\in\mathfrak{a},\\{}
[\alpha_{1},\alpha_{2}]_{A^{-}}\alpha_{3} & \text{ if }\alpha_{1},\alpha_{2}\in\mathfrak{a},\;\alpha_{3}\in B\oplus B',\\
\frac{1}{2}[[\alpha_{1},\alpha_{2}]_{A^{-}},\alpha_{3}] & \text{ if }\alpha_{1}\in B,\;\alpha_{2}\in B',\;\alpha_{3}\in\mathfrak{a}.\\
\frac{1}{2}([\alpha_{1},\alpha_{2}]_{A^{-}}\alpha_{3}+n((\alpha_{3}\alpha_{2})\alpha_{1}-(\alpha_{3}\alpha_{1})\alpha_{2})) & \text{ if }\alpha_{1},\alpha_{2},\alpha_{3}\in B\oplus B',
\end{cases}\label{eq:<a,a>a}
\end{equation}
\end{prop}

\begin{proof}
Since $\langle\alpha_{1},\alpha_{2}\rangle\neq0$, we need to consider
only the following cases:

\emph{Case 1}: $\alpha_{1},\alpha_{2},\alpha_{3}\in\mathfrak{a}$.
Consider the Jacoby identity for $(E_{1,2}+\varepsilon_{1}E_{2,1})\otimes\alpha_{1}$,
$(E_{1,2}+\varepsilon_{1}E_{2,1})\otimes\alpha_{2}$, $(E_{2,3}+\varepsilon_{2}E_{3,2})\otimes\alpha_{3}$
where $\varepsilon_{i}=\pm1$, then use Lemma \ref{total} to get
\begin{eqnarray*}
 &  & (\varepsilon_{1}E_{2,3}+\varepsilon_{1}\varepsilon_{2}E_{3,2})\otimes\alpha_{1}\circ(\alpha_{2}\circ\alpha_{3})+(\varepsilon_{1}E_{2,3}-\varepsilon_{1}\varepsilon_{2}E_{3,2})\otimes[\alpha_{1},\alpha_{2}\circ\alpha_{3}]\\
 &  & +(\varepsilon_{1}E_{2,3}-\varepsilon_{1}\varepsilon_{2}E_{3,2})\otimes\alpha_{1}\circ[\alpha_{2},\alpha_{3}]+(\varepsilon_{1}E_{2,3}+\varepsilon_{1}\varepsilon_{2}E_{3,2})\otimes[\alpha_{1},[\alpha_{2},\alpha_{3}]]\\
 &  & =2(\varepsilon_{1}E_{2,3}-\varepsilon_{1}\varepsilon_{2}E_{3,2})\otimes[\alpha_{1},\alpha_{2}]_{A^{-}}\circ\alpha_{3}+2\frac{(n-4)}{n}(\varepsilon_{1}E_{2,3}+\varepsilon_{1}\varepsilon_{2}E_{3,2})\otimes[[\alpha_{1},\alpha_{2}]_{A^{-}},\alpha_{3}]\\
 &  & +\frac{8\varepsilon_{1}}{n}(E_{2,3}+\varepsilon_{2}E_{3,2})\otimes\langle\alpha_{1},\alpha_{2}\rangle\alpha_{3}+(\varepsilon_{1}E_{2,3}+\varepsilon_{1}\varepsilon_{2}E_{3,2})\otimes\alpha_{2}\circ(\alpha_{1}\circ\alpha_{3})+(\varepsilon_{1}E_{2,3}-\varepsilon_{1}\varepsilon_{2}E_{3,2})\\
 &  & \otimes[\alpha_{2},\alpha_{1}\circ\alpha_{3}]+(\varepsilon_{1}E_{2,3}-\varepsilon_{1}\varepsilon_{2}E_{3,2})\otimes\alpha_{2}\circ[\alpha_{1},\alpha_{3}]+(\varepsilon_{1}E_{2,3}+\varepsilon_{1}\varepsilon_{2}E_{3,2})\otimes[\alpha_{2},[\alpha_{1},\alpha_{3}]].
\end{eqnarray*}
By collecting the coefficients of $E_{2,3}$ and using associativity
of $\mathfrak{a}$, we get $\langle\alpha_{1},\alpha_{2}\rangle\alpha_{3}=[[\alpha_{1},\alpha_{2}]_{A^{-}},\alpha_{3}]$.

\emph{Case 2}: $\alpha_{1},\alpha_{2}\in\mathfrak{a}$ and $\alpha_{3}\in B\oplus B'$.
First assume that $\alpha_{3}\in B$. Consider the Jacoby identity
for $(E_{1,2}+\varepsilon_{1}E_{2,1})\otimes\alpha_{1}$, $(E_{1,2}+\varepsilon_{1}E_{2,1})\otimes\alpha_{2}$,
$e_{1}\otimes\alpha_{3}$ where $\varepsilon_{1}=\pm1$, then use
Lemmas \ref{bb'} and \ref{total} to get $\varepsilon_{1}e_{1}\otimes(\alpha_{1}(\alpha_{2}\alpha_{3})+\frac{1}{2}(-2+\frac{4}{n})[\alpha_{1},\alpha_{2}]_{A^{-}}\alpha_{3}-\frac{2}{n}\langle\alpha_{1},\alpha_{2}\rangle\alpha_{3}-\alpha_{2}(\alpha_{1}\alpha_{3}))=0,$
and so 
\[
\alpha_{1}(\alpha_{2}\alpha_{3})-\frac{1}{2}(2-\frac{4}{n})[\alpha_{1},\alpha_{2}]_{A^{-}}\alpha_{3}-\frac{2}{n}\langle\alpha_{1},\alpha_{2}\rangle\alpha_{3}-\alpha_{2}(\alpha_{1}\alpha_{3})=0.
\]
 Since $[\alpha_{1},\alpha_{2}]_{A^{-}}\alpha_{3}=\alpha_{1}(\alpha_{2}\alpha_{3})-\alpha_{2}(\alpha_{1}\alpha_{3})$,
we get $\langle\alpha_{1},\alpha_{2}\rangle\alpha_{3}=[\alpha_{1},\alpha_{2}]_{A^{-}}\alpha_{3},$
as required. Similarly, one can show that $\langle\alpha_{1},\alpha_{2}\rangle\alpha_{3}=[\alpha_{1},\alpha_{2}]_{A^{-}}\alpha_{3}$
for $\alpha_{1},\alpha_{2}\in\mathfrak{a}$ and $\alpha_{3}\in B'$.

\emph{Case 3}: $\alpha_{1}\in B$, $\alpha_{2}\in B'$ and $\alpha_{3}\in\mathfrak{a}$.
Consider the Jacoby identity for $e_{1}\otimes\alpha_{1}$, $e_{1}\otimes\alpha_{2}$,
$(E_{1,2}+\varepsilon E_{2,1})\otimes\alpha_{3}$ where $\varepsilon=\pm1$,
then use (\ref{eq:formulla for natural elements}), Lemmas \ref{bb'}
and \ref{total} to get 
\begin{eqnarray*}
 &  & (E_{2,1}+E_{1,2})\otimes[\alpha_{1},\alpha_{2}\alpha_{3}]+(E_{1,2}-E_{2,1})\otimes\alpha_{1}\circ(\alpha_{2}\alpha_{3})=((E_{1,2}+\varepsilon E_{2,1})-\frac{2}{n}(E_{1,2}+\varepsilon E_{2,1}))\\
 &  & \otimes[[\alpha_{1},\alpha_{2}]_{A^{-}},\alpha_{3}]+(E_{1,2}-\varepsilon E_{2,1})\otimes[\alpha_{1},\alpha_{2}]_{A^{-}}\circ\alpha_{3}+\frac{4}{n}(E_{1,2}+\varepsilon E_{2,1})\otimes\langle\alpha_{1},\alpha_{2}\rangle\alpha_{3}\\
 &  & +\varepsilon(E_{2,1}+E_{1,2})\otimes[\alpha_{3}\alpha_{1},\alpha_{2}]+\varepsilon(E_{2,1}-E_{1,2})\otimes(\alpha_{3}\alpha_{1})\circ\alpha_{2}.
\end{eqnarray*}
By collecting the coefficients of $E_{1,2}$ we get
\[
\alpha_{1}(\alpha_{2}\alpha_{3})=[\alpha_{1},\alpha_{2}]_{A^{-}}\alpha_{3}-\varepsilon\alpha_{2}(\alpha_{3}\alpha_{1})-\frac{1}{n}[[\alpha_{1},\alpha_{2}]_{A^{-}},\alpha_{3}]+\frac{2}{n}\langle\alpha_{1},\alpha_{2}\rangle\alpha_{3}.
\]
 Since $[\alpha_{1},\alpha_{2}]_{A^{-}}\alpha_{3}=(\alpha_{1}\alpha_{2}-\alpha_{2}\alpha_{1})\alpha_{3}=(\alpha_{1}\alpha_{2})\alpha_{3}-(\alpha_{2}\alpha_{1})\alpha_{3}$,
$(\alpha_{1}\alpha_{2})\alpha_{3}=\alpha_{1}(\alpha_{2}\alpha_{3})$
and $(\alpha_{2}\alpha_{1})\alpha_{3}=\alpha_{2}(\eta(\alpha_{3})\alpha_{1})=-\varepsilon\alpha_{2}(\alpha_{3}\alpha_{1})$,
by using Proposition \ref{mod B} we get $\langle\alpha_{1},\alpha_{2}\rangle\alpha_{3}=\frac{1}{2}[[\alpha_{1},\alpha_{2}]_{A^{-}},\alpha_{3}]$,
as required.

\emph{Case 4:} $\alpha_{1}\in B,\alpha_{2}\in B'$ and $\alpha_{3}\in B$
(the case with $\alpha_{3}\in B'$ being similar). Consider the Jacoby
identity for $e_{2}\otimes\alpha_{3}$, $e_{1}\otimes\alpha_{2}$,
$e_{1}\otimes\alpha_{1}$ then use (\ref{eq:formulla for natural elements})
we get
\begin{eqnarray*}
 &  & [e_{2}\otimes\alpha_{3},\frac{1}{2}(2E_{11}-\frac{2}{n}I)\otimes[\alpha_{2},\alpha_{1}]_{A^{-}}+\frac{2}{n}\langle\alpha_{2},\alpha_{1}\rangle]=[\frac{1}{2}(E_{2,1}+E_{1,2})\otimes[\alpha_{3},\alpha_{2}]_{A^{-}}+\frac{1}{2}(E_{2,1}-E_{1,2})\\
 &  & \otimes(\alpha_{3}\circ\alpha_{2})_{A^{+}},e_{1}\otimes\alpha_{1}]+[e_{1}\otimes\alpha_{2},\frac{1}{2}(E_{2,1}+E_{1,2})\otimes[\alpha_{3},\alpha_{1}]_{C}+\frac{1}{2}(E_{2,1}-E_{1,2})\otimes(\alpha_{3}\circ\alpha_{1})_{E}].
\end{eqnarray*}
Using (\ref{main for}) and Lemma \ref{bb'} we get
\[
\frac{1}{n}e_{2}\otimes([\alpha_{2},\alpha_{1}]_{A^{-}}\alpha_{3}-\langle\alpha_{2},\alpha_{1}\rangle\alpha_{3})=\frac{1}{2}e_{2}\otimes([\alpha_{3},\alpha_{2}]_{A^{-}}\alpha_{1}+(\alpha_{3}\circ\alpha_{2})_{A^{+}}\alpha_{1}-[\alpha_{3},\alpha_{1}]_{C}\alpha_{2}-(\alpha_{3}\circ\alpha_{1})_{E}\alpha_{2}),
\]
so, $\langle\alpha_{2},\alpha_{1}\rangle\alpha_{3}=\frac{1}{2}([\alpha_{2},\alpha_{1}]_{A^{-}}\alpha_{3}+n((\alpha_{3}\alpha_{1})\alpha_{2}-(\alpha_{3}\alpha_{2})\alpha_{1})),$or
equivalently, $\langle\alpha_{1},\alpha_{2}\rangle\alpha_{3}=\frac{1}{2}([\alpha_{1},\alpha_{2}]_{A^{-}}\alpha_{3}+n((\alpha_{3}\alpha_{2})\alpha_{1}-(\alpha_{3}\alpha_{1})\alpha_{2}))$,
as required. 
\end{proof}
\begin{prop}
\label{derivation 2}(1) $[d,\langle\alpha,\beta\rangle]=\langle d\alpha,\beta\rangle+\langle\alpha,d\beta\rangle$
for all $\alpha,\beta\in\mathfrak{b}$ and $d\in D.$

(2) $\langle A^{+},A^{+}\rangle$, $\langle A^{-},A^{-}\rangle$,
$\langle B,B'\rangle$, $\langle C,C'\rangle$ and $\langle E,E'\rangle$
are ideals of the Lie algebra $D$.

(3) $D$ acts by derivations on $\mathfrak{b}$ and leaves all subspaces
$A^{+},A^{-},B,B',\dots,E,E'$ invariant.
\end{prop}

\begin{proof}
Let $\alpha=a_{1}^{+}+a_{1}^{-}+b_{1}+b_{1}'+c_{1}+c_{1}'+e_{1}+e_{1}'$
and $\beta=a_{2}^{+}+a_{2}^{-}+b_{2}+b_{2}'+c_{2}+c_{2}'+e_{2}+e_{2}'$
be the decompositions of $\alpha$ and $\beta$ into homogeneous parts.
By considering Jacobi identities for the following 5 triples,
\begin{align*}
 & (i)\ d,\,x_{1}^{+}\otimes a_{1}^{-},\,x_{2}^{+}\otimes a_{2}^{-};\quad(ii)\ d,\,x_{1}^{-}\otimes a_{1}^{+},\,x_{2}^{-}\otimes a_{2}^{+};\\
 & (iii)\ d,\,u\otimes b_{i},v'\otimes b_{j}';\quad(iv)\ d,\,s\otimes c,s'\otimes c';\quad(v)\ d,\,\lambda\otimes e,\lambda'\otimes e';
\end{align*}
we get the following equations, respectively,
\begin{multline}
[d,\langle a_{1}^{-},a_{2}^{-}\rangle]=\langle da_{1}^{-},a_{2}^{-}\rangle+\langle a_{1}^{-},da_{2}^{-}\rangle;\quad[d,\langle a_{1}^{+},a_{2}^{+}\rangle]=\langle da_{1}^{+},a_{2}^{+}\rangle+\langle a_{1}^{+},da_{2}^{+}\rangle;\\{}
[d,\langle b_{i},b_{j}'\rangle]=\langle db_{i},b_{j}'\rangle+\langle b_{i},db_{j}'\rangle;\;[d,\langle c_{i},c_{j}'\rangle]=\langle dc_{i},c_{j}'\rangle+\langle c_{i},dc_{j}'\rangle;\;[d,\langle e_{i},e_{j}'\rangle]=\langle de_{i},e_{j}'\rangle+\langle e_{i},de_{j}'\rangle.\label{eq:part1}
\end{multline}
and
\begin{multline}
d(a_{1}^{-}a_{2}^{-})=(da_{1}^{-})a_{2}^{-}+a_{1}^{-}(da_{2}^{-});\quad d(a_{1}^{+}a_{2}^{+})=(da_{1}^{+})a_{2}^{+}+a_{1}^{+}(da_{2}^{+});\\
d(b_{i}b_{j}')=(db_{i})b_{j}'+b(db_{j}');\quad d(c_{i}c_{j}')=(dc_{i})c_{j}'+c_{i}(dc_{j}');\quad d(e_{i}e_{j}')=(de_{i})e_{j}'+e_{i}(de_{j}'),\label{eq:part2}
\end{multline}
where $i,j=1,2$. We illustrate this by considering the case $(i)$.
By applying Jacobi identity to $d$, $x_{1}^{+}\otimes a_{1}^{-}$,
$x_{2}^{+}\otimes a_{2}^{-}$, we get 
\[
[d,[x_{1}^{+}\otimes a_{1}^{-},x_{2}^{+}\otimes a_{2}^{-}]]=[[d,x_{1}^{+}\otimes a_{1}^{-}],x\otimes a_{2}^{-}]+[x_{1}^{+}\otimes a_{1}^{-},[d,x_{2}^{+}\otimes a_{2}^{-}]]
\]
Using (\ref{main for}) and Lemma \ref{total} we get
\begin{multline*}
x_{1}^{+}\circ x_{2}^{+}\otimes d\frac{[a_{1}^{-},a_{2}^{-}]_{A^{-}}}{2}+[x_{1}^{+},x_{2}^{+}]\otimes d\frac{(a_{1}^{-}\circ a_{2}^{-})_{A^{+}}}{2}+(x_{1}^{+}\mid x_{2}^{+})[d,\langle a_{1}^{-},a_{2}^{-}\rangle]\\
=x_{1}^{+}\circ x_{2}^{+}\otimes\frac{[da_{1}^{-},a_{2}^{-}]_{A^{-}}}{2}+[x_{1}^{+},x_{2}^{+}]\otimes\frac{(da_{1}^{-}\circ a_{2}^{-})_{A^{+}}}{2}+(x_{1}^{+}\mid x_{2}^{+})\langle da_{1}^{-},a_{2}^{-}\rangle\\
+x_{1}^{+}\circ x_{2}^{+}\otimes\frac{[a_{1}^{-},da_{2}^{-}]_{A^{-}}}{2}+[x_{1}^{+},x_{2}^{+}]\otimes\frac{(a_{1}^{-}\circ da_{2}^{-})_{A^{+}}}{2}+(x_{1}^{+}\mid x_{2}^{+})\langle a_{1}^{-},da_{2}^{-}\rangle.
\end{multline*}
Then 
\begin{multline}
x_{1}^{+}\circ x_{2}^{+}\otimes d\frac{[a_{1}^{-},a_{2}^{-}]_{A^{-}}}{2}+[x_{1}^{+},x_{2}^{+}]\otimes d\frac{(a_{1}^{-}\circ a_{2}^{-})_{A^{+}}}{2}=x_{1}^{+}\circ x_{2}^{+}\otimes\frac{[da_{1}^{-},a_{2}^{-}]_{A^{-}}}{2}+\\{}
[x_{1}^{+},x_{2}^{+}]\otimes\frac{(da_{1}^{-}\circ a_{2}^{-})_{A^{+}}}{2}+x_{1}^{+}\circ x_{2}^{+}\otimes\frac{[a_{1}^{-},da_{2}^{-}]_{A^{-}}}{2}+[x_{1}^{+},x_{2}^{+}]\otimes\frac{(a_{1}^{-}\circ da_{2}^{-})_{A^{+}}}{2}\label{ideal 1}
\end{multline}
and 
\begin{equation}
(x_{1}^{+}\mid x_{2}^{+})[d,\langle a_{1}^{-},a_{2}^{-}\rangle]=(x_{1}^{+}\mid x_{2}^{+})(\langle da_{1}^{-},a_{2}^{-}\rangle+\langle a_{1}^{-},da_{2}^{-}\rangle).\label{ideal 2}
\end{equation}
When $x_{1}^{+}=x_{2}^{+}=E_{1,2}+E_{2,1}$, we have $\tra(x_{1}^{+}x_{2}^{+})=1$.
Hence (\ref{ideal 2}) is equivalent to $[d,\langle a_{1}^{-},a_{2}^{-}\rangle]=\langle da_{1}^{-},a_{2}^{-}\rangle+\langle a_{1}^{-},da_{2}^{-}\rangle.$
When $x_{1}^{+}=E_{1,2}+E_{2,1}$ and $x_{2}^{+}=E_{2,3}+E_{3,2}$,
we have $[x_{1}^{+},x_{2}^{+}]=E_{1,3}+E_{3,1}$ and $x_{1}^{+}\circ x_{2}^{+}=E_{1,3}+E_{3,1}$.
Hence (\ref{ideal 1}) is equivalent to:

\begin{align*}
d(\frac{[a_{1}^{-},a_{2}^{-}]_{A^{-}}}{2}+\frac{(a_{1}^{-}\circ a_{2}^{-})_{A^{+}}}{2}) & =\frac{[da_{1}^{-},a_{2}^{-}]_{A^{-}}}{2}+\frac{(da_{1}^{-}\circ a_{2}^{-})_{A^{+}}}{2}+\frac{[a_{1}^{-},da_{2}^{-}]_{A^{-}}}{2}+\frac{(a_{1}^{-}\circ da_{2}^{-})_{A^{+}}}{2},
\end{align*}
or equivalently, $d(a_{1}^{-}a_{2}^{-})=(da_{1}^{-})a_{2}^{-}+a_{1}^{-}(da_{2}^{-})$,
as in equation (\ref{eq:part1}).

By combining the equations (\ref{eq:part1}) we get $[d,\langle\alpha,\beta\rangle]=\langle d\alpha,\beta\rangle+\langle\alpha,d\beta\rangle,$
for all $d\in D$ and $\alpha,\beta\in\mathfrak{b}$. This implies
that the subspaces $\langle A^{+},A^{+}\rangle$, $\langle A^{-},A^{-}\rangle$,
$\langle B,B'\rangle$, $\langle C,C'\rangle$ and $\langle E,E'\rangle$
are ideals in $D.$ The equations (\ref{eq:part2}) show that $d$
acts by derivation. Similarly, one can show that $D$ acts by derivations
on $\mathfrak{b}$. Using Proposition \ref{derivation rule} and Tables
\ref{t4} and \ref{t5}, we see that the action of $D$ leaves all
subspaces $A^{+},A^{-},B,\dots,E'$ invariant as required. 
\end{proof}
The above results can be summarized as follows.
\begin{thm}[\textcolor{black}{The structure theorem for $\Theta_{n}$-graded }Lie
algebra\textcolor{black}{s}]
 \label{structure} Let $L$ be an $\Theta_{n}$-graded Lie algebra
and let $\mathfrak{g}\cong sl_{n}$ be the grading subalgebra of $L$.
Suppose that $n\ge7$ or $n=5,6$ and the conditions (\ref{eq:MainAssumptions})
hold. Then
\[
L=(\mathfrak{g}\otimes A)\oplus(V\otimes B)\oplus(V'\otimes B')\oplus(S\otimes C)\oplus(S'\otimes C')\oplus(\Lambda\otimes E)\oplus(\Lambda'\otimes E')\oplus D
\]
with multiplication given by (\ref{main for}) where $A,B,B',C,C',E,E'$
are vector spaces and $D$ is the sum of the trivial $\mathfrak{g}$-modules.
Define by $\mathfrak{g^{+}}:=\{x\in\mathfrak{g}\mid x^{t}=x\}$ and
$\mathfrak{g^{-}}:=\{x\in\mathfrak{g}\mid x^{t}=-x\}$ the subspaces
of symmetric and skew-symmetric matrices in $\mathfrak{g}$, respectively.
Then the component $\mathfrak{g}\otimes A$ can be decomposed further
as $\mathfrak{g}\otimes A=(\mathfrak{g^{+}}\oplus\mathfrak{g^{-}})\otimes A=(\mathfrak{g^{+}}\otimes A^{-})\oplus(\mathfrak{g}^{-}\otimes A^{+})$
where $A^{-}$ and $A^{+}$ are two copies of the vector space $A$.
Denote$\mathfrak{a}:=A^{+}\oplus A^{-}\oplus C\oplus E\oplus C'\oplus E'$
and $\mathfrak{b}:=\mathfrak{a}\oplus B\oplus B'$. Then the product
in $L$ induces an algebra structure on both $\mathfrak{a}$ and $\mathfrak{b}$
satisfying the following properties.

(i) $\mathfrak{a}$ is a unital associative subalgebra of $\mathfrak{b}$
with involution whose symmetric and skew-symmetric elements are $A^{+}\oplus E\oplus E'$
and $A^{-}\oplus C\oplus C'$, respectively, see Theorems \ref{associati p}
and \ref{a invol}.

(ii) $\mathfrak{b}$ is a unital algebra with an involution $\eta$
whose symmetric and skew-symmetric elements are $A^{+}\oplus E\oplus E'\oplus B\oplus B'$
and $A^{-}\oplus C\oplus C'$, respectively, see Theorem \ref{b involution}
and Proposition \ref{iden}.

(iii) $B\oplus B'$ is an associative $\mathfrak{a}$-bimodule with
a hermitian form $\chi$ with values in $\mathfrak{a}$. More exactly,
for all $\beta_{1},\beta_{2}\in B\oplus B'$ and $\alpha\in\mathfrak{a}$
we have $\chi(\beta_{1},\beta_{2})=\beta_{1}\beta_{2}$, $\chi(\alpha\beta_{1},\beta_{2})=\alpha\chi(\beta_{1},\beta_{2})$,
$\eta(\chi(\beta_{1},\beta_{2}))=\chi(\beta_{2},\beta_{1})$ and $\chi(\beta_{1},\alpha\beta_{2})=\chi(\beta_{1},\beta_{2})\eta(\alpha)$,
see Propositions \ref{b is A module} and \ref{mod B}.

(iv) $\mathcal{A}:=A^{-}\oplus A^{+}$ is a unital associative subalgebra
of $\mathfrak{a}$ and $C\oplus E$, $C'\oplus E'$, $B$ and $B'$
are $\mathcal{A}$-bimodules, see Corollaries \ref{cor A is sub}
and \ref{cor B is A (sub) }.

(v) $D$ acts by derivations on $\mathfrak{b}$, see Propositions
\ref{derivation rule} and \ref{derivation 2}.
\end{thm}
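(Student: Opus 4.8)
The plan is to assemble the statement from the results of Sections~\ref{sect_An_BCn}--\ref{six}. The decomposition of the $\mathfrak{g}$-module $L$ into isotypic components is immediate from Proposition~\ref{Th-stru of gamma deco} (with $D$ the centralizer of $\mathfrak{g}$ in $L$), and the further refinement $\mathfrak{g}\otimes A=\mathfrak{g}^{+}\otimes A^{+}\oplus\mathfrak{g}^{-}\otimes A^{-}$ is exactly (\ref{AA}).

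The substantive point, and the one I expect to be \emph{the main obstacle}, is the claim that the bracket on $L$ may be normalized to the system (\ref{main for}). Here I would argue as follows. Since each isotypic component has the form $X\otimes M$ with $X$ an irreducible $\mathfrak{g}$-module and $M$ a multiplicity space, any $\mathfrak{g}$-equivariant bilinear map from a pair of such components into a third is a sum of tensors $\varphi\otimes\mu$ with $\varphi\in Hom_{\mathfrak{g}}(X\otimes Y,Z)$ and $\mu$ a bilinear map of the multiplicity spaces. All the relevant Hom-spaces are at most two-dimensional: they are listed in (\ref{t2}), computed from Lemma~\ref{Le-dimen} together with the $\pi$-truncated tensor product tables, and it is precisely here that the hypotheses (\ref{eq:MainAssumptions}) enter, to discard the anomalous summands of Remark~\ref{rem:TensorProductSmall_n} when $n=4,5$. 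Fixing the explicit basis vectors of (\ref{t2}) and absorbing the remaining scalar freedom into the definitions of the bilinear products on $A,B,B',C,C',E,E',D$ (Table~\ref{t3}), together with the normalization $\mathfrak{g}=\mathfrak{g}\otimes 1$ which forces (\ref{eq:iden 1}), produces (\ref{main for}); the empty cells of Table~\ref{t3} are then filled by anticommutativity of the bracket. The verification that these choices are mutually consistent---i.e.\ that evaluating the Jacobi identity on suitable root vectors $E_{ij}$ never yields contradictory scalars---is recorded in Lemma~\ref{total} and Paragraphs~(F1)--(F4), and this is the only place where real computation, as opposed to bookkeeping, is involved.

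With (\ref{main for}) available, items (i)--(v) follow by quoting the corresponding results. For (i): associativity of $\mathfrak{a}$ under $\alpha_{1}\alpha_{2}=\frac12[\alpha_{1},\alpha_{2}]+\frac12\alpha_{1}\circ\alpha_{2}$ is Theorem~\ref{associati p}; unitality with identity $1^{+}$ follows from Proposition~\ref{iden} applied to the larger algebra $\mathfrak{b}\supseteq\mathfrak{a}$; and the order-two antiautomorphism $\gamma$ with $+1$-eigenspace $A^{+}\oplus E\oplus E'$ and $-1$-eigenspace $A^{-}\oplus C\oplus C'$ is Theorem~\ref{a invol}. For (ii): the product on $\mathfrak{b}$ extending that on $\mathfrak{a}$ is the one of Table~\ref{t5}, $\eta$ is the antiautomorphism of Theorem~\ref{b involution}, unitality is again Proposition~\ref{iden}, and the symmetric and skew-symmetric parts are read off from the definitions of $\eta$ and $\gamma$. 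Item (iii) combines Proposition~\ref{b is A module} (the $\mathfrak{a}$-bimodule structure on $B\oplus B'$) with Proposition~\ref{mod B} (the hermitian form $\chi$ and its three identities). Item (iv) is the conjunction of Corollaries~\ref{cor A is sub}, \ref{c,e are} and \ref{cor B is A (sub) }. Item (v) is Propositions~\ref{derivation rule} and~\ref{derivation 2}. Assembling all of these gives the theorem.
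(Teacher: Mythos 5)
Your proposal is correct and matches the paper's treatment exactly: the paper offers no separate argument for Theorem~\ref{structure}, introducing it only with ``The above results can be summarized as follows,'' and your assembly of Proposition~\ref{Th-stru of gamma deco}, the decomposition (\ref{AA}), the normalization of the bracket to (\ref{main for}) via the Hom-space computations (\ref{t2}), and the citations for items (i)--(v) reproduces precisely the chain of results the paper relies on. Your added remark that the real content lies in justifying (\ref{main for}) --- absorbing scalar freedom into the multiplicity-space products because the relevant Hom-spaces are at most two-dimensional --- is a fair gloss on the paper's appeal to the methods of Seligman, Berman--Moody, Benkart--Zelmanov and Allison--Benkart--Gao.
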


\subsection{Matrix realization of the algebra $\mathfrak{a}$}

Recall that $\mathfrak{g}\otimes A=\mathfrak{g^{+}}\otimes A^{-}\oplus\mathfrak{g}^{-}\otimes A^{+}$
where $\mathfrak{g}^{\pm}=\{x\in sl_{n}\mid x^{t}=\pm x\}$ and $A^{\pm}$
is a copy of the vector space $A$. We identify $\mathfrak{g}$ with
$\mathfrak{g}\otimes1$ where $1$ is a distinguished element of $A$.
We denote by $a^{\pm}$ the image of $a\in A$ in the space $A^{\pm}$.
Recall that $\mathcal{A}=A^{+}\oplus A^{-}$ is an associative algebra
with identity element $1^{+}$. Consider the subspaces $A_{1}=span\{a^{+}+a^{-}\mid a\in A\}$
and $A_{2}=span\{a^{+}-a^{-}\mid a\in A\}$. Then $\mathcal{A}=A_{1}\oplus A_{2}$
as a vector space. In this subsection we show that $A_{1}$ and $A_{2}$
are $2$-sided ideals of the algebra $\mathcal{A}$ and that the associative
algebra $\mathfrak{a}$ has a realization by $2\times2$ matrices
with entries in the components of $\mathfrak{a}$. We start with the
following observation.
\begin{lem}
\label{element and -} For all $a^{\pm}\in A^{\pm}$, $c\in C$, $c'\in C'$,
$e\in E$, $e'\in E'$, $b\in B$, $b'\in B'$ we have

$(1)\;a^{+}=1^{-}.a^{-}=a^{-}.1^{-}$ and $a^{-}=1^{-}.a^{+}=a^{+}.1^{-}$;

$(2)\;c=1^{-}.c=-c.1^{-}$ and $e=1^{-}.e=-e.1^{-}$;

$(3)\;c'=c'.1^{-}=-1^{-}.c'$ and $e'=e'.1^{-}=-1^{-}.e'$;

$(4)\;b=1^{-}b$ and $b'=b'.1^{-}$.
\end{lem}

\begin{proof}
We will only prove (1), the other statements being similar. Let $x^{+}\in\mathfrak{g}^{+}$
and $y^{\pm}\in\mathfrak{g}^{\pm}$. Using (\ref{eq:iden 1}) we get
\[
[x^{+}\otimes1^{-},y^{+}\otimes a^{-}]=[x^{+},y^{+}]\otimes a^{+}\text{ and }[x^{+}\otimes1^{-},y^{-}\otimes a^{+}]=[x^{+},y^{-}]\otimes a^{-}.
\]
Using these relations and the formulas in Remark \ref{FF}, we get
\begin{align*}
[x^{+},y^{+}]\otimes a^{+} & =x^{+}\circ y^{+}\otimes\frac{[1^{-},a^{-}]_{A^{-}}}{2}+[x^{+},y^{+}]\otimes\frac{(1^{-}\circ a^{-})_{A^{+}}}{2}+(x^{+}\mid y^{+})\langle1^{-},a^{-}\rangle\\{}
[x^{+},y^{-}]\otimes a^{-} & =.x^{+}\diamond y^{-}\otimes\frac{[1^{-},a^{+}]_{A^{+}}}{2}+[x^{+},y^{-}]\otimes\frac{(1^{-}\circ a^{+})_{A^{-}}}{2}
\end{align*}
so $a^{+}=\frac{(1^{-}\circ a^{-})_{A^{+}}}{2}$, $\frac{[1^{-},a^{-}]_{A^{-}}}{2}=0$,
$a^{-}=\frac{(1^{-}\circ a^{+})_{A^{-}}}{2}$ and $\frac{[1^{-},a^{+}]_{A^{+}}}{2}=0$.
This implies (1), as required.
\end{proof}
\begin{prop}
\label{pdec} Let $e_{1}=\frac{1^{+}+1^{-}}{2}$ and $e_{2}=\frac{1^{+}-1^{-}}{2}$.
Then the following hold.

(1) $e_{1}$ and $e_{2}$ are orthogonal idempotents with $e_{1}+e_{2}=1^{+}$
and $\eta(e_{1})=e_{2}$.

(2) Let $\mathfrak{a}=e_{1}\mathfrak{a}e_{1}\oplus e_{1}\mathfrak{a}e_{2}\oplus e_{2}\mathfrak{a}e_{1}\oplus e_{2}\mathfrak{a}e_{2}$
be the Peirce decomposition of $\mathfrak{a}$. Then $e_{1}\mathfrak{a}e_{1}=A_{1}$,
$e_{1}\mathfrak{a}e_{2}=C\oplus E$, $e_{2}\mathfrak{a}e_{1}=C'\oplus E'$,
and $e_{2}\mathfrak{a}e_{2}=A_{2}$.

(3) $A_{1}$ and $A_{2}$ are $2$-sided ideals of\textup{ $\mathcal{A}=A_{1}\oplus A_{2}$.}

(4) $e_{i}$ is the identity of $A_{i}$.

(5) $\eta(A_{1})=A_{2}$.

(6) $B=\mathcal{B}e_{2}$ and $B'=\mathcal{B}e_{1}$.

(7) $A_{1}\cong A$ and $A_{2}\cong A^{op}$ (the opposite algebra
of $A$) as algebras.
\end{prop}

\begin{proof}
(1)-(6) This is easy to check using Lemma \ref{element and -} and
properties of the Peirce decomposition.

(7) Define the map $\mathfrak{\varphi:}A\rightarrow A_{1}$ by $\varphi(a)=\frac{a^{+}+a^{-}}{2}$
where $a\in A$. Note that this map is well defined and bijective.
It remains only to check that $\varphi$ is an algebra homomorphism.
Let $a,b\in A$. Then 
\begin{align*}
\varphi(ab) & =\varphi(\frac{a\circ b}{2}+\frac{[a,b]}{2})=\varphi(\frac{a\circ b}{2})+\varphi(\frac{[a,b]}{2})=(\frac{a\circ b}{4})^{+}+(\frac{a\circ b}{4})^{-}+(\frac{[a,b]}{4})^{+}+(\frac{[a,b]}{4})^{-}\\
 & =\frac{1}{4}(a^{+}a^{+}+a^{+}a^{-}+a^{-}a^{+}+a^{-}a^{-})=(\frac{a^{+}+a^{-}}{2})(\frac{a^{+}+a^{-}}{2})=\varphi(a)\varphi(b),
\end{align*}
so $\varphi$ is a homomorphism. Thus, $A_{1}\cong A$ and $A_{2}=\eta(A_{1})\cong A^{op}$,
as required.
\end{proof}
Using Peirce decomposition of $\mathfrak{a}$ as in Proposition \ref{pdec}
we immediately get the following.
\begin{prop}
The associative algebra $\mathfrak{a}$ has the following realization
by $2\times2$ matrices with entries in the components of $\mathfrak{a}$:
$\mathfrak{a}\cong\left[\begin{array}{cc}
A_{1} & C\oplus E\\
C'\oplus E' & A_{2}
\end{array}\right].$ In particular,
\begin{align*}
A^{+} & \cong\left\{ \left[\begin{array}{cc}
a_{1} & 0\\
0 & \eta(a_{1})
\end{array}\right]\mid a_{1}\in A_{1}\right\} \quad(a^{+}\mapsto\frac{1}{2}\left[\begin{array}{cc}
a^{+}+a^{-} & 0\\
0 & a^{+}-a^{-}
\end{array}\right]),\\
A^{-} & \cong\left\{ \left[\begin{array}{cc}
a_{1} & 0\\
0 & -\eta(a_{1})
\end{array}\right]\mid a_{1}\in A_{1}\right\} \quad(a^{-}\mapsto\frac{1}{2}\left[\begin{array}{cc}
a^{+}+a^{-} & 0\\
0 & -a^{+}+a^{-}
\end{array}\right]).
\end{align*}
\end{prop}

Let $A$ be an associative algebra with involution $\sigma$ (of the
first kind) over $F$. Recall that $A$ becomes a Lie algebra $A^{(-)}$
under the Lie bracket $[x,y]=xy-yx$. Let $sym(A)$ (resp. $skew(A)$)
denotes the set of symmetric elements (resp. skew-symmetric elements)
of $A$ with respect to $\sigma$. Then, $skew(A)$ is a Lie subalgebra
of $A^{(-)}$. The following is well known.
\begin{lem}
\label{involution} Let $A_{1}$ and $A_{2}$ be two associative algebras
with involutions $\sigma_{1}$ and $\sigma_{2}$, respectively. Then
$A=A_{1}\otimes A_{2}$ is an associative algebra with involution
$\sigma=\sigma_{1}\otimes\sigma_{2}$. Moreover, we have

$(1)\,sym(A)=sym(A_{1})\otimes sym(A_{2})\oplus skew(A_{1})\otimes skew(A_{2}).$

$(2)\,skew(A)=skew(A_{1})\otimes sym(A_{2})\oplus sym(A_{1})\otimes skew(A_{2}).$
\end{lem}

\begin{proof}
It is easy to see that the right-hand side of the equation (1) (resp.
(2)) is a subspace of $sym(A)$ (resp. $skew(A)$). It remains to
note that
\begin{align*}
A_{1}\otimes A_{2} & =(sym(A_{1})\oplus skew(A_{1}))\otimes(sym(A_{2})\oplus skew(A_{2}))=sym(A_{1})\otimes sym(A_{2})\\
 & \oplus skew(A_{1})\otimes skew(A_{2})\oplus skew(A_{1})\otimes sym(A_{2})\oplus sym(A_{1})\otimes skew(A_{2}).
\end{align*}
\end{proof}

\section{\label{chap:5} Central extensions and classification of $\Theta_{n}$-graded
Lie algebras}

Recall that a \emph{central extension }of a Lie algebra $L$ is a
pair $(\tilde{L},\pi)$ consisting of a Lie algebra $\tilde{L}$ and
a surjective Lie algebra homomorphism $\pi:\tilde{L}\rightarrow L$
whose kernel lies in the center of $\tilde{L}$. A \emph{cover} or
\emph{covering} of $L$ is a central extension $(\tilde{L},\pi)$
of $L$ with $\tilde{L}$ perfect, i.e., $\tilde{L}=[\tilde{L},\tilde{L}]$.
A homomorphism of central extensions from the central extension $f:K\rightarrow L$
to the central extension $f':K'\rightarrow L$ is a Lie algebra homomorphism
$g:K\rightarrow K'$ satisfying $f=f'\circ g$. A central extension
$U:K\rightarrow L$ is \emph{universal}, if there exists a unique
homomorphism from $K$ to any other central extension $\tilde{K}$
of $L$. Any perfect Lie algebra $L$ has a unique universal central
extension, which is also perfect, called a \emph{universal covering
algebra} of $L$. Two perfect Lie algebras $L_{1}$ and $L_{2}$ are
said to be \emph{centrally isogenous} if they have the same universal
covering algebra (up to isomorphism). The aim of this section is to
classify $\Theta_{n}$-graded Lie algebras up to isomorphism and describe
their central extensions. 

This section is organized as follows. First we study basic properties
of central extensions of $(\Gamma,\mathfrak{g})$-graded Lie algebras.
Then we focus our attention to $(\Theta_{n},sl_{n})$-graded Lie algebras.
We define a centerless algebra $\mathcal{L}(\mathfrak{b})$ and show
that it is $\Theta_{n}$-graded with coordinate algebra $\mathfrak{b}$.
We also show that any $\Theta_{n}$-graded Lie algebra $L$ with coordinate
algebra $\mathfrak{b}$ is a cover of the centerless Lie algebra $\mathcal{L}(\mathfrak{b})$
and $L$ is uniquely determined (up to central isogeny) by its ``coordinate''
algebra $\mathfrak{b}$. Moreover, $L$ is centrally isogenous to
the explicitly constructed $\Theta_{n}$-graded unitary Lie algebra
$\mathfrak{u}$ of the hermitian form $\xi=w\bot-\chi$ on the $\mathcal{\mathfrak{a}}$-module
$\mathfrak{a}^{n}\oplus\mathcal{B}$. This completes the classification
of $\Theta_{n}$-graded Lie algebras up to central extensions.  At
the end we classify the $\Theta_{n}$-graded Lie algebras up to isomorphism. 

\subsection{Central extensions of $(\Gamma,\mathfrak{g})$-graded Lie algebras}

Central extensions of root graded and $BC_{r}$-graded Lie algebras
in terms of the homology of its coordinate algebra were determined
and described up to isomorphism by Allison, Benkart and Y. Gao in
\cite{allison2000central} and \cite{allison2002lie}. We mostly adopt
their approach here and follow their notations whenever possible.
Theorems \ref{universal covering is graded} and \ref{bilinea Gama}
and Lemma \ref{lifting 1 } below are natural generalizations of \cite[Proposition 1.24]{berman1992lie}
and \cite[3.1-3.4, 3.7  ]{allison2000central}, respectively, and
are proved exactly in the same way (see also \cite[5.1.2, 5.1.4, 5.1.5]{yaseen2018generalized}). 

First we note that every $\Gamma$-graded Lie algebra is perfect,
so it has a unique universal covering algebra \cite[7.9.2]{weibel1995introduction}.
\begin{thm}
\label{gama perfect} Let $L$ be a $(\Gamma,\mathfrak{g})$-graded
Lie algebra. Then $L$ is perfect. 
\end{thm}

\begin{proof}
We need to show $L\subseteq[L,L]$, i.e. $L_{\alpha}\subseteq\left[L,L\right]$
for all $\alpha\in\Gamma$. By condition $(\Gamma3)$ in Definition
\ref{def of gamma}, $L_{0}\subseteq\left[L,L\right]$. Suppose now
that $\alpha\in\Gamma\setminus\{0\}$. Then there exists $h\in H$
such that $\alpha(h)\neq0$ so for all $x\in L_{\alpha}$, $[h,x]=\alpha(h)x$
and $x=[\alpha(h)^{-1}h,x]\in\left[L_{0},L_{\alpha}\right]$. Thus,
$L_{\alpha}\subseteq\left[L_{0},L_{\alpha}\right]$, as required. 
\end{proof}
\begin{thm}
\label{universal covering is graded} Let $L$ be a $(\Gamma,\mathfrak{g})$-graded
Lie algebra and let $(U,\psi)$ be the universal covering algebra
of $L$. Then $U$ is graded by $\Gamma$ and $\psi\mid_{U_{\alpha}}U_{\alpha}\rightarrow L_{\alpha}$
is an isomorphism for all $\alpha\in\Gamma\setminus\{0\}$. In particular
$\Ker\psi\subset U_{0}$.
\end{thm}

\begin{cor}
\label{cor:iclass}(1) Let $(U,\psi)$ be the universal covering algebra
of $L$. Then $U$ is $(\Gamma,\mathfrak{g})$-graded if and only
if $L$ is $(\Gamma,\mathfrak{g})$-graded.

(2) All Lie algebras in a given isogeny class are $\Gamma$-graded
if one of them is, and all have isomorphic weight spaces for non-zero
weights.
\end{cor}

Let $\pi:$ $\tilde{L}\rightarrow L$ be a central extension with
kernel $\mathbb{E}$. Then we can lift $L$ to a subspace of $\tilde{L}$
which is mapped isomorphically to $L$ by $\pi$. We identify this
subspace with $L$. Then $\tilde{L}=L\oplus\mathbb{E}$ and the multiplication
on $\tilde{L}$ is given by $[f,\mathfrak{g}\tilde{]}=[f,\mathfrak{g}]+\zeta(f,\mathfrak{g}),f,\mathfrak{g}\in L,$
where $[f,g]$ denotes the product in $L$ and $\zeta$$:L\times L\rightarrow\mathbb{E}$
is a \emph{2-cocycle} on $L$, i.e. a bilinear map satisfying, for
all $f,g,h\in L,$
\begin{equation}
(i)\;\zeta(f,g)=-\zeta(\mathfrak{g},f),\quad(ii)\ \zeta([f,g],h)+\zeta([g,h],f)+\zeta([h,f],g)=0.\label{eq:cocyc}
\end{equation}

\begin{lem}
\label{lifting 1 } Suppose that $\pi:$ $\tilde{L}\rightarrow L$
is a central extension of a $(\Gamma,\mathfrak{g})$-graded Lie algebra
$L$ with kernel $\mathbb{E}$. Then there is lifting of the grading
subalgebra $\mathfrak{g}$ of $L$ to a subalgebra of $\tilde{L}$.
Moreover, $L$ can be lifted to a subspace $L$ of $\tilde{L}$ which
contains the given $\mathfrak{g}$ so that the corresponding $2$-cocycle
satisfies $\zeta(\mathfrak{g},L)=0$.
\end{lem}

Let $M$ be an irreducible $\ffg$-module and let $M'$ be its dual.
Let $\pi:M\times M'\to\bbF$ be any non-degenerate $\ffg$-invariant
bilinear form. Note that $\pi$ is unique up to a scalar multiple
as $\hom(M\otimes M',\bbF)\cong\hom(M,M)\cong\bbF$. Set $\pi(M,N)=0$
if $M$ and $N$ are irreducible and $N\not\cong M'$. 
\begin{thm}
\label{bilinea Gama} Let $L$ be a $(\Gamma,\mathfrak{g})$-graded
Lie algebra and let the $\mathfrak{g}$-module $L=\bigoplus_{\mu\in Q}V(\mu)\otimes W_{\mu}$
for some vector spaces $W_{\mu}$. Let $\tilde{L}=L\oplus\mathbb{E}$
be a central extension of $L$ determined by the $2$-cocycle $\zeta(\,,\,)$$:L\times L\rightarrow\mathbb{E}$
with $\zeta(\mathfrak{g},L)=0$. Then,

(1) $V(\mu)$ and $V(\nu)$ ($\mu,\nu\in Q$) are orthogonal relative
to $\zeta(\,,\,)$ whenever $V(\mu)\ncong V(\nu)'$ as $\mathfrak{g}$-modules;

(2) there exists an $\bbF$-bilinear map $\epsilon:W\times W\rightarrow\mathbb{\mathbb{E}}$
on the space $W:=\bigoplus_{\mu\in Q\setminus\{0\}}W_{\mu}$ with
$\epsilon(W_{\mu},W_{\nu})=0$ whenever $V(\mu)\ncong V(\nu)'$, such
that $\zeta(u_{\mu}\otimes w_{\mu},v_{\nu}\otimes w_{\nu})=\pi(u_{\mu},u_{\nu})\epsilon(w_{\mu},w_{\nu})$
for all $u_{\mu}\otimes w_{\mu}\in V(\mu)\otimes W_{\mu}$ and $u_{\nu}\otimes w_{\nu}\in V(\nu)\otimes W_{\nu}$.
\end{thm}

By a $2$-\emph{cocycle} on the algebra $\ffb$ we mean an $\bbF$-bilinear
map $\epsilon:\ffb\times\ffb\rightarrow\mathbb{E}$ into the $\bbF$-vector
space $\mathbb{E}$ satisfying for all $\beta_{1},\beta_{2},\beta_{3}\in\ffb,$
\begin{equation}
(i)\;\epsilon(\beta_{1},\beta_{2})=-\epsilon(\beta_{2},\beta_{1}),\quad(ii)\ \epsilon(\beta_{1}\beta_{2},\beta_{3})+\epsilon(\beta_{2}\beta_{3},\beta_{1})+\epsilon(\beta_{3}\beta_{1},\beta_{2})=0.\label{eq:cocycb}
\end{equation}

\begin{thm}
\label{lift 2} Assume that $\tilde{L}=L\oplus\mathbb{E}$ is a central
extension of the $\Theta_{n}$-graded Lie algebra $L=(\mathfrak{g}\otimes A)\oplus(V\otimes B)\oplus\dots\oplus(\Lambda'\otimes E')\oplus D$
determined by the $2$-cocycle $\zeta(\,,\,)$$:L\times L\rightarrow\mathbb{E}$
with $\zeta(\mathfrak{g},L)=0$. Then,

(1) $V(\mu)$ and $V(\nu)$ ($\mu,\nu\in\Theta_{n}^{+}$) are orthogonal
relative to $\zeta(\,,\,)$ whenever $V(\mu)\ncong V(\nu)'$ as $\mathfrak{g}$-modules;

(2) there exists a $2$-cocycle $\epsilon:\ffb\times\ffb\rightarrow\mathbb{E}$
on the algebra $\ffb$ with $\epsilon(W_{\mu},W_{\nu})=0$ whenever
$V(\mu)\ncong V(\nu)'$, such that 
\begin{eqnarray}
(a)\  & \zeta(x^{\pm}\otimes a_{1}^{\mp},y^{\pm}\otimes a_{2}^{\mp}) & =(x^{\pm}\mid y^{\pm})\epsilon(a_{1}^{\mp},a_{2}^{\mp}),\label{eq:lift2 equation}\\
(b)\  & \zeta(s\otimes c,s'\otimes c') & =(s\mid s')\epsilon(c,c'),\nonumber \\
(c)\  & \zeta(\lambda\otimes e,\lambda'\otimes e') & =(\lambda\mid\lambda')\epsilon(e,e'),\nonumber \\
(d)\  & \zeta(v\otimes b,v'\otimes b') & =\frac{2}{n}\tra(uv'^{t})\epsilon(b,b'),\nonumber \\
(e)\  & \zeta(d,\langle\beta,\beta'\rangle) & =\epsilon(d\beta,\beta')+\epsilon(\beta,d\beta')=-\zeta(\langle\beta,\beta'\rangle,d),\nonumber 
\end{eqnarray}
for all $x,y\in\ffg$, $v\in V$, $v'\in V'$, $s\in S$, $\lambda\in\Lambda$,
$s'\in S'$, $\lambda'\in\Lambda'$ and for all $a_{1}^{\mp},a_{2}^{\mp}\in A^{\mp}$,
$b\in B$, $b'\in B'$, $c\in C$, $c'\in C'$, $e\in E$, $e'\in E'$,
$\beta,\beta'\in\mathfrak{b}$ and $d\in D$.
\end{thm}

\begin{proof}
Let $W:=A\oplus C\oplus E\oplus C'\oplus E'\oplus B\oplus B'$. By
Theorem \ref{bilinea Gama}, (1) holds and  there exists an $\bbF$-bilinear
map $\epsilon:W\times W\rightarrow\mathbb{\mathbb{E}}$ such that
\begin{eqnarray*}
(a)\  & \zeta(x\otimes a_{1},y\otimes a_{2}) & =(x\mid y)\epsilon(a_{1},a_{2}),\quad(b)\ \zeta(s\otimes c,s'\otimes c')=(s\mid s')\epsilon(c,c'),\\
(c)\  & \zeta(\lambda\otimes e,\lambda'\otimes e') & =(\lambda\mid\lambda')\epsilon(e,e'),\quad(d)\ \zeta(v\otimes b,v'\otimes b')=\frac{2}{n}\tra(uv'^{t})\epsilon(b,b'),
\end{eqnarray*}
for all $x,y\in\mathfrak{g}$, $v\in V$, $v'\in V'$, $s\in S$,
$\lambda\in\Lambda$, $s'\in S'$, $\lambda'\in\Lambda'$ and for
all $a_{1},a_{2}\in A$, $b\in B$, $b'\in B'$, $c\in C$, $c'\in C'$,
$e\in E$, $e'\in E'$, $\beta,\beta'\in\mathfrak{b}$ and $d\in D$.
Since $(x^{+}\mid x^{-})=0$ for all $x^{\pm}\in\mathfrak{g}^{\pm}$,
we can extend the mapping $\epsilon$ to the algebra $\ffb=A^{+}\oplus A^{-}\oplus C\oplus E\oplus C'\oplus E'\oplus B\oplus B'$
by defining $\epsilon(a_{1}^{+},a_{2}^{-})=\epsilon(a_{1}^{-},a_{2}^{+})=0$
, $\epsilon(a_{1}^{\pm},a_{2}^{\pm})=\epsilon(a_{1},a_{2})$ and $\epsilon(a^{\pm},\alpha)=\epsilon(\alpha,a^{\pm})=0$
for all $a_{1},a_{2}\in A$, $a^{\pm}\in A^{\pm}$ and $\alpha\in C\oplus E\oplus C'\oplus E'\oplus B\oplus B'$.
Thus, we obtain an $\bbF$-bilinear map taking $\mathfrak{b}\times\mathfrak{b}$
to $\mathbb{E}$. Applying the 2-cocycle relation $\zeta([f,g],h)+\zeta([g,h],f)+\zeta([h,f],\mathfrak{g})=0$
and using the orthogonality of some of the components, we determine
that $\epsilon(\,,\,)$ is a 2- cocycle of $\mathfrak{b}$. We illustrate
these calculations by considering homogeneous elements $\alpha_{1},$$\alpha_{2}$
and $\alpha_{3}$ in $\mathfrak{a}$. Set 
\[
z_{1}=E_{1,2}+\varepsilon_{1}E_{2,1},\ z_{2}=E_{2,3}+\varepsilon_{2}E_{3,2}\text{ and }z_{3}=E_{3,1}+\varepsilon_{3}E_{1,3}\text{ where }\varepsilon_{i}=\pm1.
\]
The sign of each $\varepsilon_{i}$ can be chosen in such a way that
$z_{i}\otimes\alpha_{i}$ belongs to the corresponding homogeneous
component of $L$. Note that $\tra(z_{i}z_{j})=0$ for all $i\neq j$.
Hence by Lemma \ref{total}, we have $[z_{i}\otimes\alpha_{i},z_{j}\otimes\alpha_{j}]=z_{i}\diamond z_{j}\otimes\frac{[\alpha_{i},\alpha_{j}]}{2}+[z_{i},z_{j}]\otimes\frac{\alpha_{i}\circ\alpha_{j}}{2}.$
Then from (\ref{eq:cocyc}) with $z_{1}\otimes\alpha_{1}$, $z_{2}\otimes\alpha_{2}$,
$z_{1}\otimes\alpha_{3}$, we obtain
\begin{multline*}
([z_{1},z_{2}]\mid z_{3})\epsilon(\alpha_{1}\circ\alpha_{2},\alpha_{3})+((z_{1}\diamond z_{2})\mid z_{3})\epsilon([\alpha_{1},\alpha_{2}],\alpha_{3})+([z_{2},z_{3}]\mid z_{2})\epsilon([\alpha_{2},\alpha_{3}],\alpha_{1})\\
+(z_{2}\diamond z_{3}\mid z_{2})\epsilon([\alpha_{2},\alpha_{3}],\alpha_{1})+([z_{3},z_{1}]\mid z_{2})\epsilon(\alpha_{3}\circ\alpha_{1},\alpha_{2})+(z_{3}\diamond z_{1}\mid z_{2})\epsilon([\alpha_{3},\alpha_{1}],\alpha_{2})=0.
\end{multline*}
Using the fact that $(z\mid y)=\frac{1}{n}\tra(zy)$, it is easy to
verify that the form is associative relative to the \textquotedblleft $\diamond$\textquotedblright{}
product, (i.e. $(z\diamond y\mid z)=(z\mid y\diamond z)$ holds for
all $x,y,z\in\mathfrak{g}\cup S\cup S'\cup\Lambda\cup\Lambda'$),
and also relative to the commutator product. Thus,
\begin{multline*}
([z_{1},z_{2}]\mid z_{3})(\epsilon(\alpha_{1}\circ\alpha_{2},\alpha_{3})+\epsilon(\alpha_{2}\circ\alpha_{3},\alpha_{1})+\epsilon(\alpha_{3}\circ\alpha_{1},\alpha_{2}))\\
+(z_{1}\diamond z_{2}\mid z_{3})(\epsilon([\alpha_{1},\alpha_{2}],\alpha_{3})+\epsilon([\alpha_{2},\alpha_{3}],\alpha_{1})+\epsilon([\alpha_{3},\alpha_{1}],\alpha_{2}))=0.
\end{multline*}
Note that $\varepsilon_{1}\varepsilon_{2}\varepsilon_{3}=\pm1$, $[z_{1},z_{2}]z_{3}=E_{11}-\varepsilon_{1}\varepsilon_{2}\varepsilon_{3}E_{33}$
and $(z_{1}\diamond z_{2})z_{3}=E_{11}+\varepsilon_{1}\varepsilon_{2}\varepsilon_{3}E_{33}$.
If $\varepsilon_{1}\varepsilon_{2}\varepsilon_{3}=1$, then
\begin{equation}
\epsilon([\alpha_{1},\alpha_{2}],\alpha_{3})+\epsilon([\alpha_{2},\alpha_{3}],\alpha_{1})+\epsilon([\alpha_{3},\alpha_{1}],\alpha_{2})=0\label{eq:co1}
\end{equation}
 and we have four cases: $\varepsilon_{1}=\varepsilon_{2}=\varepsilon_{3}=1$;
$\varepsilon_{1}=1$ and $\varepsilon_{2}=\varepsilon_{3}=-1$; $\varepsilon_{1}=\varepsilon_{2}=-1$
and $\varepsilon_{3}=1$; $\varepsilon_{1}=\varepsilon_{3}=-1$ and
$\varepsilon_{2}=1$. In each of these cases $\epsilon(\alpha_{1}\circ\alpha_{2},\alpha_{3})=\epsilon(\alpha_{2}\circ\alpha_{3},\alpha_{1})=\epsilon(\alpha_{3}\circ\alpha_{1},\alpha_{2})=0$
(see Table \ref{t4}), so 
\begin{equation}
\epsilon(\alpha_{1}\circ\alpha_{2},\alpha_{3})+\epsilon(\alpha_{2}\circ\alpha_{3},\alpha_{1})+\epsilon(\alpha_{3}\circ\alpha_{1},\alpha_{2})=0\label{eq:co 2}
\end{equation}
as well. Adding equations (\ref{eq:co1}) and (\ref{eq:co 2}) gives
the desired 2-cocycle condition.

If $\varepsilon_{1}\varepsilon_{2}\varepsilon_{3}=-1$ , then
\begin{equation}
\epsilon(\alpha_{1}\circ\alpha_{2},\alpha_{3})+\epsilon(\alpha_{2}\circ\alpha_{3},\alpha_{1})+\epsilon(\alpha_{3}\circ\alpha_{1},\alpha_{2})=0\label{eq:co1-1}
\end{equation}
 and we have four cases: $\varepsilon_{1}=\varepsilon_{2}=\varepsilon_{3}=-1$;
$\varepsilon_{1}=-1$ and $\varepsilon_{2}=\varepsilon_{3}=1$; $\varepsilon_{1}=\varepsilon_{2}=1$
and $\varepsilon_{3}=-1$; $\varepsilon_{1}=\varepsilon_{3}=1$ and
$\varepsilon_{2}=-1$. In each of these cases $\epsilon([\alpha_{1},\alpha_{2}],\alpha_{3})=\epsilon([\alpha_{2},\alpha_{3}],\alpha_{1})=\epsilon([\alpha_{3},\alpha_{1}],\alpha_{2})=0$
(see Table \ref{t4}), so 
\begin{equation}
\epsilon([\alpha_{1},\alpha_{2}],\alpha_{3})+\epsilon([\alpha_{2},\alpha_{3}],\alpha_{1})+\epsilon([\alpha_{3},\alpha_{1}],\alpha_{2})=0\label{eq:co 2-1}
\end{equation}
 as well. Adding equations (\ref{eq:co1-1}) and (\ref{eq:co 2-1})
gives the desired 2-cocycle condition.

To prove (e), consider the 2-cocycle relation (\ref{eq:cocyc}) for
the elements $E_{1,2}+\varepsilon E_{2,1}\otimes\alpha_{1}$, $E_{1,2}+\varepsilon E_{2,1}\otimes\alpha_{2}$,
$d$ where $\varepsilon=\pm1$ and use Lemma \ref{total} (resp.$e_{1}\otimes b$,
$e_{1}\otimes b'\otimes\alpha_{2}$, $d$ and use (\ref{eq:formulla for natural elements})).
\end{proof}

\subsection{\label{sec:Classification-of--graded}Classification of $\Theta_{n}$-graded
Lie algebras, $n\ge5$}

We construct a centerless algebra $\mathcal{L}(\mathfrak{b})$ and
show that it is a $\Theta_{n}$-graded Lie algebra with coordinate
algebra $\mathfrak{b}$. Instead of proving directly that $\mathcal{L}(\mathfrak{b})$
satisfies the Jacoby identity (which is quite tedious and lengthy),
we construct an explicit example of a $\Theta_{n}$-graded Lie algebra
$\mathfrak{u}$ such that $\mathfrak{u}$ modulo its center is isomorphic
to $\mathcal{L}(\mathfrak{b})$, see Example \ref{exa:Model x}. We
prove that any $\Theta_{n}$-graded Lie algebra $L$ with coordinate
algebra $\mathfrak{b}$ is a cover of the centerless Lie algebra $\mathcal{L}(\mathfrak{b})$.
We show that every $\Theta_{n}$-graded Lie algebra $L$ is uniquely
determined (up to central isogeny) by its coordinate algebra $\mathfrak{b}$
and $L$ is centrally isogenous to the $\Theta_{n}$-graded unitary
Lie algebra $\mathfrak{u}$ of the hermitian form $\xi=w\bot-\chi$
on the $\mathcal{\mathfrak{a}}$-module $\mathfrak{a}^{n}\oplus\mathcal{B}$
(Proposition \ref{model ex} and Theorem \ref{model theorem}).
\begin{defn}
\cite[2.2]{allison2008unitary} Let $A$ be an associative algebra
with involution $\eta$. A map $\xi:X\times X\rightarrow A$ is called
a \emph{hermitian form} over $A$ if $X$ is a right $A$-module and
$\xi:X\times X\rightarrow A$ is a bi-additive map such that $\xi(xa,y)=\eta(a)\xi(x,y)$,
$\xi(x,ya)=\xi(x,y)a$ and $\xi(y,x)=\eta(\xi(x,y)),$ for $a\in A$
and $x,y\in X$. If $Y$ is an $A$-submodule of $X$, then $Y^{\bot}:=\left\{ x\text{\ensuremath{\in}}X\mid\xi(x,y)=0\text{ for all }y\text{\ensuremath{\in}}Y\right\} $
is also an $A$-submodule of $X.$ The form $\xi$ is said to be \emph{nondegenerate}
if $X^{\bot}=0$.
\end{defn}

\begin{defn}
\cite[4.1.1]{allison2008unitary} Let $A$ be an associative algebra
with involution. Suppose that $\xi:X\times X\rightarrow A$ is a hermitian
form over $A$. Let $\mathfrak{U}(X,\xi)=\left\{ T\in\End_{A}(X)\mid\xi(T(u),v)+\xi(u,T(v))=0,\ \forall\,u,v\in X\right\} $.
Then $\mathfrak{U}(X,\xi)$ is a Lie subalgebra of $\End_{A}(X)$,
and we say that $\mathfrak{U}(X,\xi)$ is the \emph{unitary }Lie algebra
of $\xi$.
\end{defn}

\begin{example}[Models of $\Theta_{n}$-graded Lie algebras, $n\ge5$]
\label{exa:Model x}  Let $\mathfrak{a}$ be any associative algebra
with involution $\eta$, identity element $1^{+}$ and two orthogonal
idempotents $e_{1}$ and $e_{2}$ such that $1^{+}=e_{1}+e_{2}$ and
$e_{2}=\eta(e_{1})$. Let $\mathcal{B}$ be any unital associative
right $\mathfrak{a}$-module with a hermitian form $\chi$ with values
in $\mathfrak{a}$. Put $\eta_{\mathcal{B}}=I$. Define $\beta_{1}\cdot\beta_{2}=\chi(\beta_{1},\beta_{2})$
for all $\beta_{1},\beta_{2}\in\mathcal{B}$. Then $\mathfrak{b}=\mathfrak{a}\oplus\mathcal{B}$
is a (non-associative) algebra with multiplication extending that
on $\mathfrak{a.}$ . For each $n\ge5$, we are going to explicitly
construct a $\Theta_{n}$-graded Lie algebra with coordinate algebra
$\ffb=\mathfrak{a}\oplus\mathcal{B}$. We start with the Peirce decomposition
$\mathfrak{a}=e_{1}\mathfrak{a}e_{1}\oplus e_{1}\mathfrak{a}e_{2}\oplus e_{2}\mathfrak{a}e_{1}\oplus e_{2}\mathfrak{a}e_{2}$.
Note that $\eta(e_{1}\mathfrak{a}e_{1})=e_{2}\mathfrak{a}e_{2}$ and
both $e_{1}\mathfrak{a}e_{2}$ and $e_{2}\mathfrak{a}e_{1}$ are $\eta$-invariant.
Define
\begin{align}
A^{+} & =sym(e_{1}\mathfrak{a}e_{1}\oplus e_{2}\mathfrak{a}e_{2}),\quad A^{-}=skew(e_{1}\mathfrak{a}e_{1}\oplus e_{2}\mathfrak{a}e_{2}),\quad B=\mathcal{B}e_{2},\quad B'=\mathcal{B}e_{1},\label{eq:pd}\\
E & =sym(e_{1}\mathfrak{a}e_{2}),\quad C=skew(e_{1}\mathfrak{a}e_{2}),\quad E'=sym(e_{2}\mathfrak{a}e_{1}),\quad C'=skew(e_{2}\mathfrak{a}e_{1}),\nonumber 
\end{align}
Thus, we have $\mathfrak{a}=A^{+}\oplus A^{-}\oplus C\oplus E\oplus C'\oplus E'$
and $\mathcal{B}=B\oplus B'$. The right $\mathfrak{a}$-module $\mathcal{B}$
can be regarded as a left $\mathfrak{a}$-module by means of the action
$\alpha\cdot\beta=\beta\eta(\alpha)$ for $\alpha\in\mathfrak{a}$
and $\beta\in\mathcal{B}$. Since $\mathfrak{a}$ is a right $\mathfrak{a}$-module
under right multiplication, $\mathfrak{a}^{n}$ ($n\times1$ column
vectors with entries in $\mathfrak{a}$) is also a right $\mathcal{\mathfrak{a}}$-module.
Let $w:\mathfrak{a}^{n}\times\mathfrak{a}^{n}\rightarrow\mathcal{\mathfrak{a}}$
be a non-degenerate bilinear form on $\mathfrak{a}^{n}$ defined by
$w(\alpha_{1},\alpha_{2})=\eta(\alpha_{1})^{t}\alpha_{2}$where $\alpha_{1},\alpha_{2}\in\mathfrak{a}^{n}$.
Let $\xi:(\mathfrak{a}^{n}\oplus\mathcal{B})\times(\mathfrak{a}^{n}\oplus\mathcal{B})\rightarrow\mathfrak{a}^{n}\oplus\mathcal{B}$
be a bilinear form on $\mathfrak{a}^{n}\oplus\mathcal{B}$ defined
by $\xi(\alpha_{1}\oplus\beta_{1},\alpha_{2}\oplus\beta_{2})=w(\alpha_{1},\alpha_{2})-\chi(\beta_{1},\beta_{2})$
where $\beta_{1},\beta_{2}\in\mathcal{B}$ and $\alpha_{1},\alpha_{2}\in\mathfrak{a}^{n}$.
Then 
\[
\mathfrak{U}=\mathfrak{U}(X,\xi)=\left\{ T\in\End_{\mathfrak{a}}(\mathfrak{a}^{n}\oplus\mathcal{B})\mid\xi(T(u),v)+\xi(u,T(v))=0,\ \forall u,v\in\mathfrak{a}^{n}\oplus\mathcal{B}\right\} 
\]
 is a Lie subalgebra of $\End_{\mathfrak{a}}(\mathfrak{a}^{n}\oplus\mathcal{B})$
under the commutator $[T,T']=TT'-T'T$, called the\emph{ unitary Lie
algebra of the hermitian form $\xi=w\bot-\chi$. }We can identify
$\End_{\mathfrak{a}}(\mathfrak{a}^{n}\oplus\mathcal{B})$ in a natural
way with the algebra of $2\times2$ matrices: $\left[\begin{array}{cc}
\End_{\mathfrak{a}}(\mathfrak{a}^{n}) & \Hom_{\mathfrak{a}}(\mathcal{B},\mathfrak{a}^{n})\\
\Hom_{\mathfrak{a}}(\mathfrak{a}^{n},\mathcal{B}) & \End_{\mathfrak{a}}(\mathcal{B})
\end{array}\right]$ whose components have the following realizations: 

$M_{n}(\mathfrak{a})\cong\End_{\mathfrak{a}}(\mathfrak{a}^{n})$ via
map $M\mapsto([\alpha_{1},\dots,\alpha_{n}]^{t}\mapsto M[\alpha_{1},\dots,\alpha_{n}]^{t})$; 

$(\mathcal{B^{\ast}})^{n}\cong\Hom_{\mathfrak{a}}(\mathcal{B},\mathfrak{a}^{n})$
where $\mathcal{B^{\ast}}:=\End_{\mathfrak{a}}(\mathcal{B},\mathfrak{a})$
via map $[\lambda_{1},\dots,\lambda_{n}]^{t}\mapsto(\beta\mapsto[\lambda_{1}\beta,\dots,\lambda_{n}\beta]^{t})$; 

$(\mathcal{B}^{n})^{t}\cong\Hom_{\mathfrak{a}}(\mathfrak{a}^{n},\mathcal{B})$
via map $[\beta_{1},\dots,\beta_{n}]\mapsto([\alpha_{1},\dots,\alpha_{n}]^{t}\mapsto[\beta_{1},\dots,\beta_{n}][\alpha_{1},\dots,\alpha_{n}]^{t})$. 

\noindent Elements of $\mathfrak{a}^{n}\oplus\mathcal{B}$ can be
viewed as column vectors $[\alpha_{1},\dots,\alpha_{n},\beta]^{t}$,
and elements of $\End_{\mathfrak{a}}(\mathfrak{a}^{n}\oplus\mathcal{B})$
can be regarded as matrices $\left[\begin{array}{cc}
M & Y\\
X & N
\end{array}\right]$ where $M\in M_{n}(\mathfrak{a})$, $X=[\beta_{1},\cdots,\beta_{n}],\ (\beta_{i}\in\mathcal{B})$,
$Y=[\lambda_{1},\cdots,\lambda_{n}]^{t},\ (\lambda_{i}\in\mathcal{B^{\ast}})$
and $N\in\End_{\mathfrak{a}}(\mathcal{B})$. The action of $\End_{\mathfrak{a}}(\mathfrak{a}^{n}+\mathcal{B})$
on $\mathfrak{a}^{n}\oplus\mathcal{B}$ is by left multiplication,
and composition in $\End_{\mathfrak{a}}(\mathfrak{a}^{n}\oplus\mathcal{B})$
is matrix multiplication. For $c\in\mathcal{B}$, we define $\chi_{c}:\mathcal{B}\rightarrow\mathfrak{a}$
by $\chi_{c}(c')=\chi(c,c')$. For $\lambda=[\lambda_{1},\dots,\lambda_{n}]^{t}\in(\mathcal{B^{\ast}})^{n},\text{ set }\chi_{\underline{\lambda}}=[\chi_{\lambda_{1}},\dots,\chi_{\lambda_{n}}]^{t}.$
Let $\left[\begin{array}{cc}
M & Y\\
X & N
\end{array}\right]\in\mathfrak{U}$ and $\left[\begin{array}{c}
\alpha_{1}\\
\beta_{1}
\end{array}\right]$ $,\left[\begin{array}{c}
\alpha_{2}\\
\beta_{2}
\end{array}\right]\in\mathfrak{a}^{n}\oplus\mathcal{B}$. Then
\begin{align*}
0 & =\xi(\left[\begin{array}{cc}
M & Y\\
X & N
\end{array}\right]\left[\begin{array}{c}
\alpha_{1}\\
\beta_{1}
\end{array}\right],\left[\begin{array}{c}
\alpha_{2}\\
\beta_{2}
\end{array}\right])+\xi(\left[\begin{array}{c}
\alpha_{1}\\
\beta_{1}
\end{array}\right],\left[\begin{array}{cc}
M & Y\\
X & N
\end{array}\right]\left[\begin{array}{c}
\alpha_{2}\\
\beta_{2}
\end{array}\right])\\
 & =\xi(\left[\begin{array}{c}
M\alpha_{1}+Y\beta_{1}\\
X\alpha_{1}+N\beta_{1}
\end{array}\right],\left[\begin{array}{c}
\alpha_{2}\\
\beta_{2}
\end{array}\right])+\xi(\left[\begin{array}{c}
\alpha_{1}\\
\beta_{1}
\end{array}\right],\left[\begin{array}{c}
M\alpha_{2}+Y\beta_{2}\\
X\alpha_{2}+N\beta_{2}
\end{array}\right])\\
 & =w(M\alpha_{1}+Y\beta_{1},\alpha_{2})-\chi(X\alpha_{1}+N\beta_{1},\beta_{2})+w(\alpha_{1},M\alpha_{2}+Y\beta_{2})-\chi(\beta_{1},X\alpha_{2}+N\beta_{2})\\
 & =\eta(M\alpha_{1}+Y\beta_{1})^{t}\alpha_{2}+\eta(\alpha_{1})^{t}(M\alpha_{2}+Y\beta_{2})-\chi(X\alpha_{1}+N\beta_{1},\beta_{2})-\chi(\beta_{1},X\alpha_{2}+N\beta_{2})\\
 & =\eta(M\alpha_{1})^{t}\alpha_{2}+\eta(Y\beta_{1})^{t}\alpha_{2}+\eta(\alpha_{1})^{t}(M\alpha_{2})+\eta(\alpha_{1})^{t}(Y\beta_{2})\\
 & -\chi(X\alpha_{1},\beta_{2})-\chi(N\beta_{1},\beta_{2})-\chi(\beta_{1},X\alpha_{2})-\chi(\beta_{1},N\beta_{2}).
\end{align*}
We deduce that

(a) $\eta(M\alpha_{1})^{t}\alpha_{2}+\eta(\alpha_{1})^{t}(M\alpha_{2})=0$,
so $\eta(M)^{t}+M=0$; $\quad$(b) $\chi(N\beta_{1},\beta_{2})+\chi(\beta_{1},N\beta_{2})=0$;

(c) $\eta(Y\beta_{1})^{t}\alpha_{2}=\chi(\beta_{1},X\alpha_{2})$;
$\quad$(d) $\eta(\alpha_{1})^{t}(Y\beta_{2})-\chi(X\alpha_{1},\beta_{2})=w(\alpha_{1},Y\beta_{2})-\chi(X\alpha_{1},\beta_{2})=0$.

Fix $X=[\gamma_{1},\cdots,\gamma_{n}]$ and $Y=[\lambda_{1},\dots,\lambda_{n}]^{t}$.
By (c), we have $\eta(Y\beta_{1})^{t}\alpha_{2}=\beta_{1}(X\alpha_{2})$
where $\alpha_{2}\in\mathfrak{a}^{n}$ and $\beta_{1}\in\mathcal{B}$.
Hence $\eta([\lambda_{1}\beta_{1},\dots,\lambda_{n}\beta_{1}])\alpha_{2}=\beta_{1}([\gamma_{1},\cdots,\gamma_{n}]\alpha_{2})$,
so
\[
[\lambda_{1}\beta_{1},\cdots,\lambda_{n}\beta_{1}]=[\eta(\beta_{1}\gamma_{1}),\cdots,\eta(\beta_{1}\gamma_{n})]=[\gamma_{1}\beta_{1},\cdots,\gamma_{n}\beta_{1}].
\]
Therefore $\lambda_{i}\beta_{1}=\gamma_{i}\beta_{1}=\chi(\gamma_{i},\beta_{1})$.
It follows from the nondegeneracy of $w$ that for any $X\in(\mathcal{B}^{n})^{t}\cong\Hom_{\mathfrak{a}}(\mathfrak{a}^{n},\mathcal{B})$,
there is a unique $Y\in(\mathcal{B^{\ast}})^{n}\cong\Hom_{\mathfrak{a}}(\mathcal{B},\mathfrak{a}^{n})$
satisfying (c). Moreover, when $X=(\underline{\beta})^{t}$ in (c),
then $Y=\chi_{\underline{\beta}}$. With these convention, we have
\[
\mathfrak{U}=\left\{ \left[\begin{array}{cc}
M & \chi_{\underline{\beta}}\\
\underline{\beta}^{t} & N
\end{array}\right]\mid M\in M_{n}(\mathfrak{a}),\ (\eta M)^{t}+M=0,\ \underline{\beta}\in\mathcal{B}^{n},\ N\in\mathfrak{U}(\chi)\right\} ,
\]
where $\mathfrak{U}(\chi)=\{N\in\End_{\mathfrak{a}}(\mathcal{B})\mid\chi(N\beta,\beta')+\chi(\beta,N\beta')=0\ \forall\beta,\beta'\in\mathcal{B}\}$
is the unitary Lie algebra of $\chi$. Recall that $1^{+}=e_{1}+e_{2}$.
Put $1^{-}=e_{1}-e_{2}$. Let 
\begin{align*}
\overline{\mathfrak{g}} & =\left\{ \left[\begin{array}{cc}
M & 0\\
0 & 0
\end{array}\right]\mid M\in M_{n}(\bbF)\otimes span\{1^{+},1^{-}\}\text{ and }(\eta M)^{t}+M=0\right\} \\
 & =\left\{ \left[\begin{array}{cc}
M & 0\\
0 & 0
\end{array}\right]\mid M\in sym(M_{n}(\bbF))\otimes1^{-}\oplus skew(M_{n}(\bbF))\otimes1^{+}\right\} .
\end{align*}
 By Lemma \ref{involution}, the map $\eta:M_{n}(\bbF)\otimes\mathfrak{a}\rightarrow M_{n}(\bbF)\otimes\mathfrak{a}$,
given by $\sigma(x\otimes\alpha)=x^{t}\otimes\eta(\alpha)$, is an
involution of the algebra $M_{n}(\bbF)\otimes\mathfrak{a}\cong M_{n}(\mathfrak{a})$.
We have 
\[
skew(M_{n}(\bbF)\otimes\mathfrak{a})=sym(M_{n}(\bbF))\otimes skew(\mathfrak{a})\oplus skew(M_{n}(\bbF))\otimes sym(\mathfrak{a})
\]
where $skew(\mathfrak{a})=A^{-}\oplus C\oplus C'$ and $sym(\mathfrak{a})=A^{+}\oplus E\oplus E'$
with respect to $\eta$. Note that $sym(M_{n}(\bbF))\otimes1^{-}\oplus skew(M_{n}(\bbF))\otimes1^{+}$
is a Lie subalgebra of $skew(M_{n}(\bbF)\otimes\mathfrak{a})$ and
it is isomorphic to $gl_{n}$. (The corresponding isomorphism $\varphi:gl_{n}\rightarrow\overline{\mathfrak{g}}$
is given by $\varphi(x)=\left[\begin{array}{cc}
(x+x^{t})\otimes\frac{(e_{1}-e_{2})}{2}\oplus(x-x^{t})\otimes\frac{(e_{1}+e_{2})}{2} & 0\\
0 & 0
\end{array}\right]).$ Put $\mathfrak{g}=[\overline{\mathfrak{g}},\overline{\mathfrak{g}}]\cong sl_{n}.$
Let $\mathfrak{h}=\left[\begin{array}{cc}
H\otimes1^{-} & 0\\
0 & 0
\end{array}\right]$ where $H$ is the set of diagonal matrices of $sl_{n}$. Then $\mathfrak{h}$
is a Cartan subalgebra of $\mathfrak{g}$ and $\mathfrak{U}$ has
the following weight spaces with respect to the adjoint action of
$\mathfrak{h}$:
\begin{align*}
\mathfrak{U}_{\varepsilon_{i}-\varepsilon_{j}} & =\left\{ \left[\begin{array}{cc}
E_{i,j}\otimes e_{1}\alpha e_{1}+E_{j,i}\otimes e_{2}\alpha e_{2} & 0\\
0 & 0
\end{array}\right]\mid\alpha\in\mathfrak{a}\right\} ,\ 1\leq i\neq j\leq n;\\
\mathfrak{U}_{\varepsilon_{i}+\varepsilon_{j}} & =\left\{ \left[\begin{array}{cc}
E_{i,j}\otimes(c+e)-E_{j,i}\otimes\eta(c+e) & 0\\
0 & 0
\end{array}\right]\mid(c+e)\in C+E\right\} ,\ 1\leq i,j\leq n;\\
\mathfrak{U}_{-\varepsilon_{i}-\varepsilon_{j}} & =\left\{ \left[\begin{array}{cc}
E_{i,j}\otimes(c'+e')-E_{j,i}\otimes\eta(c'+e') & 0\\
0 & 0
\end{array}\right]\mid(c'+e')\in C'+E'\right\} ,\ 1\leq i,j\leq n;\\
\mathfrak{U}_{\varepsilon_{i}} & =\left\{ \left[\begin{array}{cc}
0 & v_{i}\otimes b\\
(v_{i})^{t}\otimes b & 0
\end{array}\right]\mid v\in V,\ b\in B\right\} ,\ 1\leq i\leq n;\\
\mathfrak{U}_{-\varepsilon_{i}} & =\left\{ \left[\begin{array}{cc}
0 & v_{i}'\otimes b'\\
(v_{i}')^{t}\otimes b' & 0
\end{array}\right]\mid v'\in V',\ b'\in B'\right\} ,1\leq i\leq n;\\
\mathfrak{U}_{0} & =\left\{ \left[\begin{array}{cc}
(E_{i,i}-E_{i+1,i+1})\otimes a^{-} & 0\\
0 & 0
\end{array}\right]\mid a^{-}\in A^{-},\ i=1,2,\cdots,n-1\right\} \cup\left\{ \left[\begin{array}{cc}
0 & 0\\
0 & N
\end{array}\right]\mid N\in\mathfrak{U}(\chi)\right\} .
\end{align*}

Note that $\mathfrak{U}$ is $\Theta_{n}$-pregraded but not necessarily
$\Theta_{n}$-graded. Let $\mathfrak{u}$ be the ideal of $\mathfrak{U}$
generated by $\mathfrak{g}$. Then by Proposition \ref{pre-4 equivalent},
$\mathfrak{u}=\bigoplus_{\alpha\in\Theta_{n}\setminus\{0\}}\mathfrak{U}_{\alpha}\bigoplus\sum_{\alpha,-\alpha\in\Theta_{n}\setminus\{0\}}\left[\mathfrak{U}_{\alpha},\mathfrak{U}_{-\alpha}\right]$
and $\mathfrak{u}$ is $\Theta_{n}$-graded.  We call $\mathfrak{u}$
the\emph{ $\Theta_{n}$-graded unitary Lie algebra of} $\xi=w\bot-\chi$. 

Identify $M\otimes\alpha\in M_{n}\otimes\mathfrak{a}$ with $\left[\begin{array}{cc}
M\otimes\alpha & 0\\
0 & 0
\end{array}\right]$, $P\in\End_{\mathfrak{a}}(\mathcal{B})$ with $\left[\begin{array}{cc}
0 & 0\\
0 & P
\end{array}\right]$ and $v\otimes\beta$ with $\left[\begin{array}{cc}
0 & v\otimes\beta\\
v^{t}\otimes\beta & 0
\end{array}\right]$ where $v\in V$ and $\beta\in\mathcal{B}$. As $\mathfrak{g}$-modules,
$\mathfrak{g}\otimes A$, $V\otimes B$, $V'\otimes B'$, $S\otimes C$,
$S'\otimes C'$, $\Lambda\otimes E$ and $\Lambda'\otimes E'$ are
generated by highest weight vectors corresponding to non-zero weights.
Hence, these modules are contained in $\mathfrak{u}$. Then, with
the above identifications, we have 
\[
\mathfrak{u}=(\mathfrak{g}\otimes A)\oplus(V\otimes B)\oplus(V'\otimes B')\oplus\ldots\oplus(\Lambda'\otimes E')\oplus D
\]
 where $D=\left[\begin{array}{cc}
I\otimes A^{-} & 0\\
0 & U(\chi)
\end{array}\right]\cap\mathfrak{u}$ is the centralizer of $\mathfrak{g}$ in $\mathfrak{u}$. We have
a standard Lie bracket on $\mathfrak{u}$: 
\[
[x\otimes\alpha,y\otimes\beta]=(x\otimes\alpha)(y\otimes\beta)-(y\otimes\beta)(x\otimes\alpha)=xy\otimes\alpha\beta-yx\otimes\beta\alpha.
\]
Define $[\alpha_{1},\alpha_{2}]=\alpha_{1}\alpha_{2}-\alpha_{2}\alpha_{1}$
and $\alpha_{1}\circ\alpha_{2}=\alpha_{1}\alpha_{2}+\alpha_{2}\alpha_{1}$
for $\alpha_{1},\alpha_{2}\in\mathfrak{b}$. We claim that the coordinate
algebra of $\mathfrak{u}$ is exactly $\mathfrak{b}$. Note that it
coincides with $\mathfrak{b}$ as a vector space. It remains to check
that the product on $\mathfrak{b}$ induced by the Lie structure of
$\mathfrak{u}$ (see (\ref{pro on a}) and (\ref{pro on b})), which
is denoted by ``$\cdot$'' below, coincides with the original product.
This can be done by multiplying various components of $\mathfrak{u}$.
We illustrate these calculations by checking the following three products:
$a_{1}^{-}\cdot a_{2}^{-}$, $b\cdot b'$ and $a^{-}\cdot b$. Let
$x^{+},x_{1}^{+},x_{2}^{+}\in sl_{n}$, $v\in V$, $v'\in V'$, $a^{-},a_{1}^{-},a_{2}^{-}\in A^{\pm}$,
$b\in B$, $b'\in B'$. We have 
\[
[x_{1}^{+}\otimes a_{1}^{-},x_{2}^{+}\otimes a_{2}^{-}]=[\left[\begin{array}{cc}
x_{1}^{+}\otimes a_{1}^{-} & 0\\
0 & 0
\end{array}\right],\left[\begin{array}{cc}
x_{2}^{+}\otimes a_{2}^{-} & 0\\
0 & 0
\end{array}\right]]=\left[\begin{array}{cc}
x_{1}^{+}x_{2}^{+}\otimes a_{1}^{-}a_{2}^{-}-x_{2}^{+}x_{1}^{+}\otimes a_{2}^{-}a_{1}^{-} & 0\\
0 & 0
\end{array}\right]
\]
Since $a_{1}^{-},a_{2}^{-}\in e_{1}\mathfrak{a}e_{1}\oplus e_{2}\mathfrak{a}e_{2}$,
we have $[a_{1}^{-},a_{2}^{-}],a_{1}^{-}\circ a_{2}^{-}\in e_{1}\mathfrak{a}e_{1}\oplus e_{2}\mathfrak{a}e_{2}$.
As $\eta([a_{1}^{-},a_{2}^{-}])=-[a_{1}^{-},a_{2}^{-}]$ and $\eta(a_{1}^{-}\circ a_{2}^{-})=a_{1}^{-}\circ a_{2}^{-}$,
we have $[a_{1}^{-},a_{2}^{-}]\in A^{-}$ and $a_{1}^{-}\circ a_{2}^{-}\in A^{+}$.
Then $x_{1}^{+}x_{2}^{+}\otimes a_{1}^{-}a_{2}^{-}-x_{2}^{+}x_{1}^{+}\otimes a_{2}^{-}a_{1}^{-}=(x_{1}^{+}x_{2}^{+}-x_{2}^{+}x_{1}^{+})\otimes\frac{[a_{1}^{-},a_{2}^{-}]_{A^{-}}}{2}+(x_{1}^{+}x_{2}^{+}+x_{2}^{+}x_{1}^{+}-\frac{2}{n}\tra(x_{1}^{+}x_{2}^{+})I)\otimes\frac{(a_{1}^{-}\circ a_{2}^{-})_{A^{+}}}{2}+(x_{1}^{+}\mid x_{2}^{+})I\otimes[a_{1}^{-},a_{2}^{-}]_{A^{-}}.$
Therefore, 
\[
[x_{1}^{+}\otimes a_{1}^{-},x_{2}^{+}\otimes a_{2}^{-}]=x_{1}^{+}\circ x_{2}^{+}\otimes\frac{[a_{1}^{-},a_{2}^{-}]_{A^{-}}}{2}+[x_{1}^{+},x_{2}^{+}]\otimes\frac{(a_{1}^{-}\circ a_{2}^{-})_{A^{+}}}{2}+(x_{1}^{+}\mid x_{2}^{+})I\otimes[a_{1}^{-},a_{2}^{-}]_{A^{-}}.
\]
where $(x_{1}^{+}\mid x_{2}^{+})I\otimes[a_{1}^{-},a_{2}^{-}]_{A^{-}}\in D$.
Thus, $a_{1}^{-}\cdot a_{2}^{-}=\frac{[a_{1}^{-},a_{2}^{-}]_{A^{-}}}{2}+\frac{(a_{1}^{-}\circ a_{2}^{-})_{A^{+}}}{2}=\frac{1}{2}((a_{1}^{-}a_{2}^{-}-a_{2}^{-}a_{1}^{-})+(a_{1}^{-}a_{2}^{-}+a_{2}^{-}a_{1}^{-}))=a_{1}^{-}a_{2}^{-}$,
as required.  Similarly, we check $b\cdot b'$:

\begin{gather*}
[v\otimes b,v'\otimes b']=[\left[\begin{array}{cc}
0 & v\otimes b\\
v^{t}\otimes b & 0
\end{array}\right],\left[\begin{array}{cc}
0 & v'\otimes b'\\
v'^{t}\otimes b' & 0
\end{array}\right]]=\left[\begin{array}{cc}
v(v')^{t}\otimes bb'-v'v^{t}\otimes b'b & 0\\
0 & (v)^{t}v'\otimes[b,b']_{A^{-}}
\end{array}\right].
\end{gather*}
Indeed, $b\in\mathcal{B}e_{2}$ and $b'\in\mathcal{B}e_{1}$, so $[b,b'],b\circ b'\in e_{1}\mathfrak{a}e_{1}\oplus e_{2}\mathfrak{a}e_{2}$.
Since $\eta([b,b'])=-[b,b']$ and $\eta(b\circ b')=b\circ b'$, we
have $[b,b']\in A^{-}$ and $b\circ b'\in A^{+}$. Then $v(v')^{t}\otimes bb'-v'v^{t}\otimes b'b=(v(v')^{t}-v'v^{t})\otimes\frac{[b,b']_{A^{-}}}{2}+(v(v')^{t}+v'v^{t}-\frac{2}{n}\tra(v'v^{t})I)\otimes\frac{(b\circ b')_{A^{+}}}{2}+\frac{1}{n}\tra(v'v^{t})\frac{[b,b']_{A^{-}}}{2}.$
Therefore 
\[
[v\otimes b,v'\otimes b']=v'\circ v\otimes\frac{[b,b']_{A^{-}}}{2}+[v',v]\otimes\frac{(b\circ b')_{A^{+}}}{2}+\tra(v(v')^{t})\left[\begin{array}{cc}
\frac{1}{n}I\otimes[b,b']_{A^{-}} & 0\\
0 & 1\otimes[b,b']_{A^{-}}
\end{array}\right].
\]
where $\tra(v(v')^{t})\left[\begin{array}{cc}
\frac{1}{n}I\otimes[b,b']_{A^{-}} & 0\\
0 & 1\otimes[b,b']_{A^{-}}
\end{array}\right]\in D$. Thus, $b\cdot b'=\frac{[b,b']_{A^{-}}}{2}+\frac{(b\circ b')_{A^{+}}}{2}=bb'$,
as required. Since $(x^{+})^{t}=x^{+}$, $\eta(a^{-})=-a^{-}$ and
$(v\otimes b)^{t}(x^{+}\otimes a^{-})^{t}=v^{t}(x^{+})^{t}\otimes ba^{-}=-(x^{+}v\otimes a^{-}b)^{t}$,
we get $[x^{+}\otimes a^{-},v\otimes b]=\left[\begin{array}{cc}
0 & x^{+}v\otimes a^{-}b\\
(x^{+}v)^{t}\otimes a^{-}b & 0
\end{array}\right]=x^{+}v\otimes a^{-}b.$ Thus, $a^{-}\cdot b=a^{-}b$. So the product on $\mathfrak{b}$ determined
by ($\ref{main for}$), (\ref{pro on a}) and (\ref{pro on b}) coincides
with the original product on $\mathfrak{b}$, as required. We also
note that Pierce decomposition (\ref{eq:pd}) for $\mathfrak{b}$
implies that $\mathfrak{u}$ satisfies the conditions (\ref{eq:MainAssumptions}).
We summarize these facts in the following proposition.
\end{example}

\begin{prop}
\label{model ex} Let $n\ge5$ and let $\mathfrak{a}$ and $\mathcal{B}$
be as in Example \ref{exa:Model x}. Let $\mathfrak{u}$ be the $\Theta_{n}$-graded
unitary Lie algebra of the hermitian form $\xi=w\bot-\chi$ on the
$\mathcal{\mathfrak{a}}$-module $\mathfrak{a}^{n}\oplus\mathcal{B}$.
Then $\mathfrak{u}$ \textup{is }$\Theta_{n}$-graded with coordinate
algebra $\mathfrak{b}$. Moreover, $\mathfrak{u}$ satisfies the conditions
(\ref{eq:MainAssumptions}) in the case $n=5,6$.
\end{prop}

Let $L$ be as in Theorem \ref{structure} . By Proposition \ref{derivation 2}
and (\ref{eq:kk}) $\langle A^{+},A^{+}\rangle$, $\langle A^{-},A^{-}\rangle$,
$\langle B,B'\rangle$, $\langle C,C'\rangle$ and $\langle E,E'\rangle$
are ideals of the Lie algebra $D$, $D$ acts by derivations on $\mathfrak{b}$
and leaves all subspaces $A^{+},A^{-},B,B',\dots,E,E'$ invariant
and 
\[
D=\langle\mathfrak{b},\mathfrak{b}\rangle=\langle A^{+},A^{+}\rangle+\langle A^{-},A^{-}\rangle+\langle B,B'\rangle+\langle C,C'\rangle+\langle E,E'\rangle.
\]

For $\alpha,\beta\in\mathfrak{b}$, denote by $D_{\alpha,\beta}$
as follows: if $\alpha\in X$ and $\beta\notin X'$ with $X=B,C,E$
or $\alpha\in A^{+}$ and $\beta\in A^{-}$ then $D_{\alpha,\beta}=D_{\beta,\alpha}=0$;
otherwise, $D_{\alpha,\beta}$ is the $\bbF$-linear map $\gamma\mapsto\langle\alpha,\beta\rangle\gamma$
on $\mathfrak{b}$ as defined in (\ref{eq:<a,a>a}) (e.g. $D_{\alpha,\beta}(\gamma)=[[\alpha,\beta]_{A^{-}},\gamma]$
if $\alpha,\beta,\gamma\in\mathfrak{a}$). Note that the map $D_{\alpha,\beta}$
depends only on the algebra $\mathfrak{b}$ and doesn't depend on
the choice of the specific $\Theta_{n}$-graded Lie algebra $L$ with
coordinate algebra $\mathfrak{b}$, so by Proposition \ref{derivation 2},
$D_{\alpha,\beta}$ is a derivation of $\mathfrak{b}$. More exactly,
$D_{\alpha,\beta}\in\Der_{*}(\mathfrak{b}):=\left\{ d\in\Der_{\bbF}(\mathfrak{b})\mid dX\subseteq X\text{ for }X=A^{+},A^{-},B,\cdots,E'\right\} $,
which is a Lie subalgebra of $\Der_{\bbF}(\mathfrak{b})$. This can
also be checked by straightforward calculations. Set $D_{\mathfrak{b},\mathfrak{b}}=span\{D_{\alpha,\beta}\mid\alpha,\beta\in\mathfrak{b}\}\subseteq\Der_{*}(\mathfrak{b})$.
Let $\varphi:D\rightarrow\Der_{*}(\mathfrak{b})$, $\varphi(d)\beta=d\beta$.
Then $\varphi(D)=D_{\mathfrak{b},\mathfrak{b}}$ and the center $Z(L)$
of $L$ is equal to the kernel of $\varphi$. 
\begin{lem}
\textup{\label{ideal B,b}} $[\psi,D_{\alpha_{1},\alpha_{2}}]=D_{\psi\alpha_{1},\alpha_{2}}+D_{\alpha_{1},\psi\alpha_{2}}$,
for all $\alpha_{1},\alpha_{2}\in\mathfrak{b}$ and $\psi\in\Der_{*}(\mathfrak{b})$.\textup{
In particular, $D_{\mathfrak{b},\mathfrak{b}}$ is an ideal in $\Der_{*}(\mathfrak{b})$.}
\end{lem}

\begin{proof}
 This is checked by straightforward calculations using Proposition
\ref{derivation rule}.  To illustrate this, suppose $\alpha_{1},\alpha_{2}\in\mathfrak{a}$
and $\delta\in\mathfrak{b}$. If $\delta\in\mathfrak{a}$, then, as
required, 
\begin{align*}
[\psi,D_{\alpha_{1},\alpha_{2}}](\delta) & =\psi D_{\alpha_{1},\alpha_{2}}(\delta)-D_{\alpha_{1},\alpha_{2}}\psi(\delta)=\psi([[\alpha_{1},\alpha_{2}]_{A^{-}},\delta])-[[\alpha_{1},\alpha_{2}]_{A^{-}},\psi(\delta)]\\
 & =\psi([\alpha_{1},\alpha_{2}]_{A^{-}}\delta)-\psi(\delta[\alpha_{1},\alpha_{2}]_{A^{-}})-[\alpha_{1},\alpha_{2}]_{A^{-}}.\psi(\delta)+\psi(\delta)[\alpha_{1},\alpha_{2}]_{A^{-}}\\
 & =\psi([\alpha_{1},\alpha_{2}]_{A^{-}})\delta+[\alpha_{1},\alpha_{2}]_{A^{-}}\psi(\delta)-\psi(\delta)[\alpha_{1},\alpha_{2}]_{A^{-}}\\
 & -\delta\psi([\alpha_{1},\alpha_{2}]_{A^{-}})-[\alpha_{1},\alpha_{2}]_{A^{-}}.\psi(\delta)+\psi(\delta)[\alpha_{1},\alpha_{2}]_{A^{-}}\\
 & =\psi([\alpha_{1},\alpha_{2}]_{A^{-}})\delta-\delta\psi([\alpha_{1},\alpha_{2}]_{A^{-}})=[\psi([\alpha_{1},\alpha_{2}]_{A^{-}}),\delta]\\
 & =[(\psi\alpha_{1})\alpha_{2}+\alpha_{1}(\psi\alpha_{2})-(\psi\alpha_{2})\alpha_{1}-\alpha_{2}(\psi\alpha_{1}),\delta]\\
 & =[(\psi\alpha_{1})\alpha_{2}-\alpha_{2}(\psi\alpha),\delta]+[\alpha_{1}(\psi\alpha_{2})-(\psi\alpha_{2})\alpha_{1},\delta]\\
 & =[[\psi\alpha_{1},\alpha_{2}]_{A^{-}},\delta]+[[\alpha_{1},\psi\alpha_{2}]_{A^{-}},\delta]=D_{\psi\alpha_{1},\alpha_{2}}+D_{\alpha_{1},\psi\alpha_{2}}(\delta).
\end{align*}
If $\delta\in B\oplus B'$, then, as required, 
\begin{align*}
[\psi,D_{\alpha_{1},\alpha_{2}}](\delta) & =\psi D_{\alpha_{1},\alpha_{2}}(\delta)-D_{\alpha_{1},\alpha_{2}}\psi(\delta)=\psi([\alpha_{1},\alpha_{2}]_{A^{-}}\delta)-[\alpha_{1},\alpha_{2}]_{A^{-}}\psi(\delta)\\
 & =\psi([\alpha_{1},\alpha_{2}]_{A^{-}})\delta+[\alpha_{1},\alpha_{2}]_{A^{-}}\psi(\delta)-[\alpha_{1},\alpha_{2}]_{A^{-}}.\psi(\delta)=\psi([\alpha_{1},\alpha_{2}]_{A^{-}})\delta\\
 & =\psi(\alpha_{1}\alpha_{2})\delta-\psi(\alpha_{2}\alpha_{1})\delta=((\psi\alpha_{1})\alpha_{2}-\alpha_{2}(\psi\alpha_{1})+\alpha_{1}(\psi\alpha_{2})-(\psi\alpha_{2})\alpha_{1}))\delta\\
 & =[\psi\alpha_{1},\alpha_{2}]_{A^{-}}\delta+[\alpha_{1},\psi\alpha_{2}]_{A^{-}}\delta=D_{\psi\alpha_{1},\alpha_{2}}+D_{\alpha_{1},\psi\alpha_{2}}(\delta).
\end{align*}
\end{proof}
\begin{thm}
\label{L(b) center-1} Let $n\ge5$ and let $\mathfrak{b}$ and $\mathfrak{u}$
be as in Example \ref{exa:Model x}. Define the algebra 
\[
\mathcal{L}(\mathfrak{b}):=(\mathfrak{g^{+}}\otimes A^{-})\oplus(\mathfrak{g}^{-}\otimes A^{+})\oplus(V\otimes B)\oplus\cdots\oplus(\Lambda'\otimes E')\oplus D_{\mathfrak{b},\mathfrak{b}}
\]
 with multiplication as in (\ref{main for}) with $D$ replaced by
$D_{\mathfrak{b},\mathfrak{b}}$ and $\langle\alpha,\beta\rangle$
replaced by $D_{\alpha,\beta}$. Then the following hold.

$(1)$ $\mathcal{L}(\mathfrak{b})$ is a Lie algebra isomorphic to
$\mathfrak{u}/Z(\mathfrak{u})$ where $Z(\mathfrak{u})$ is the center
of $\mathfrak{u}$. 

$(2)$ $\mathcal{L}(\mathfrak{b})$ is $\Theta_{n}$-graded with coordinate
algebra $\mathfrak{b}$.

$(3)$ Every $\Theta_{n}$-graded Lie algebra with coordinate algebra
$\mathfrak{b}$ is a cover of $\mathcal{L}(\mathfrak{b})$.
\end{thm}

\begin{proof}
(1) Define a linear map $f:\mathfrak{u}\rightarrow\mathcal{L}(\mathfrak{b})$
by $f(x)=x$, for all $x\in(\mathfrak{g^{+}}\otimes A^{-})\oplus(\mathfrak{g}^{-}\otimes A^{+})\oplus\cdots\oplus(\Lambda'\otimes E')$
and $f(\langle\alpha,\beta\rangle)=D_{\alpha,\beta}$, for all homogeneous
$\alpha,\beta\in\mathfrak{b}$. It is clear that $f$ is a surjective
map. We claim that $f$ is a Lie algebra homomorphism, i.e. $f([x,y])=[f(x),f(y)]$
for all homogeneous $x,y\in\mathfrak{u}$. This is clear if $x\not\in D$
or $y\not\in D$. If both $x,y\in D$, by using Lemma \ref{ideal B,b},
we get 
\begin{align*}
f([\langle\alpha_{1},\alpha_{2}\rangle,\langle\beta_{1},\beta_{2}\rangle]) & =f(\langle D_{\alpha_{1},\alpha_{2}}\beta_{1},\beta_{2}\rangle+\langle\beta_{1},D_{\alpha_{1},\alpha_{2}}\beta_{2}\rangle)=D_{D_{\alpha_{1},\alpha_{2}}\beta_{1},\beta_{2}}+D_{\beta_{1},D_{\alpha_{1},\alpha_{2}}\beta_{2}}\\
 & =[D_{\alpha_{1},\alpha_{2}},D_{\beta_{1},\beta_{2}}]=[f(\langle\alpha_{1},\alpha_{2}\rangle),f(\langle\beta_{1},\beta_{2}\rangle)],
\end{align*}
as required. It follows from (\ref{main for}) that $\ker(f)=Z(\mathfrak{u})$,
so $\mathcal{L}(\mathfrak{b})$ is a Lie algebra isomorphic to $\mathfrak{u}/Z(\mathfrak{u})$. 

(2) By construction, it is clear that $\mathcal{L}(\mathfrak{b})$
is $\Theta_{n}$-graded with coordinate algebra $\mathfrak{b}$.

(3) Let $L$ be a $\Theta_{n}$-graded Lie algebra with coordinate
algebra $\mathfrak{b}$. By replacing $\mathfrak{u}$ by $L$ in (1),
we get $\mathcal{L}(\mathfrak{b})\cong L/Z(L)$. 
\end{proof}
Next theorem completes the classification of $\Theta_{n}$-graded
Lie algebras up to central extensions.
\begin{thm}[Classification of $\Theta_{n}$-graded Lie algebras]
\label{model theorem}  Let $n\ge5$ and let $L$ be a perfect Lie
algebra. Then $L$ is $(\Theta_{n},\mathfrak{g})$-graded (and satisfies
the conditions (\ref{eq:MainAssumptions}) if $n=5,6$) if and only
if there exist an associative algebra $\mathfrak{a}$ with involution
$\eta$, identity element $1^{+}$ and two orthogonal idempotents
$e_{1}$ and $e_{2}$ such that $1^{+}=e_{1}+e_{2}$ and $e_{2}=\eta(e_{1})$,
a unital associative right $\mathfrak{a}$-module $\mathcal{B}$ with
a hermitian form $\chi$ with values in $\mathfrak{a}$ such that
$L$ is centrally isogenous to the $(\Theta_{n},\mathfrak{g})$-graded
unitary Lie algebra $\mathfrak{u}$ of the hermitian form $\xi=w\bot-\chi$
on the right $\mathcal{\mathfrak{a}}$-module $\mathfrak{a}^{n}\oplus\mathcal{B}$
(see Example \ref{exa:Model x}).
\end{thm}

\begin{proof}
The ``if'' part follows from Proposition \ref{model ex} and Corollary
\ref{cor:iclass}. To prove the ``only if'', suppose that $L$ is
as in the theorem. By Theorem \ref{structure} and Proposition \ref{pdec},
$L$ has coordinate algebra $\mathfrak{b=a}+\mathcal{B}$ with $\mathfrak{a}$
being associative containing two orthogonal idempotents $e_{1}$ and
$e_{2}$ with the above properties. By Proposition \ref{model ex},
the $(\Theta_{n},\mathfrak{g})$-graded unitary Lie algebra $\mathfrak{u}$
has the same coordinate algebra. By Theorem \ref{L(b) center-1},
$L/Z(L')\cong\mathcal{L}(\mathfrak{b})\cong\mathfrak{u}/Z(\mathfrak{u})$,
so $L$ and $\mathfrak{u}$ are centrally isogenous.
\end{proof}
\begin{rem}
There is another approach to classification of weight-graded Lie algebras
by using so-called structurable algebras (non-associative unital algebras
with involution satisfying certain identities), see for example \cite[Appendix]{allison2002lie}
and \cite{allison1978class,allison2003structurable}. Any structurable
algebra $A$ gives rise to a Lie algebra $K(A)$ via the so-called
Tits-Kantor-Koecher construction. By imposing some extra conditions
on $A$, one can make the Lie algebra $K(A)$ weight-graded and then
describe its coordinate algebra $\mathfrak{b}$ in terms of the structurable
algebra $A$, see for example \cite[Example 6.36]{allison2002lie}.
This approach is more technical but probably unavoidable in the case
of the grading subalgebras of small rank as the coordinate algebras
$\mathfrak{b}$ become much more difficult to characterize, see for
example \cite[Ch. 6]{allison2002lie}.
\end{rem}

\subsection{\label{universal section} Universal central extensions}

In Theorem \ref{L(b) center-1} we defined the centerless $(\Theta_{n},\mathfrak{g})$-graded
Lie algebra $\mathcal{L}(\mathfrak{b}):=(\mathfrak{g^{+}}\otimes A^{-})\oplus(\mathfrak{g}^{-}\otimes A^{+})\oplus(V\otimes B)\oplus\cdots\oplus(\Lambda'\otimes E')\oplus D_{\mathfrak{b},\mathfrak{b}}$
with coordinate algebra $\mathfrak{b}$ and multiplication as in (\ref{main for})
with $D$ replaced by $D_{\mathfrak{b},\mathfrak{b}}$ and $\langle\alpha,\beta\rangle$
replaced by $D_{\alpha,\beta}$. In this subsection we compute the
universal central extension $\widehat{\mathcal{L}(\mathfrak{b})}$
of $\mathcal{L}(\mathfrak{b})$ and we show that for every $\Theta_{n}$-graded
Lie algebra $L$ there is a subspace $X$ of the center of $\widehat{\mathcal{L}(\mathfrak{b})}$
such that $L$ is isomorphic to $\mathcal{L}(\mathfrak{b},X):=\widehat{\mathcal{L}(\mathfrak{b})}/X$.
We prove that the center of $\widehat{\mathcal{L}(\mathfrak{b})}$
is $\hfb(\mathfrak{b})$ (the full skew-dihedral homology group of
$\mathfrak{b}$). This finishes the classification of $\Theta_{n}$-graded
Lie algebras up to isomorphism. 

Recall that $\Der_{*}(\mathfrak{b}):=\left\{ d\in\Der(\mathfrak{b})\mid dX\subseteq X\text{ for }X=A^{+},A^{-},B,\cdots,E'\right\} $
and $D_{\mathfrak{b},\mathfrak{b}}=span\{D_{\alpha,\beta}\mid\alpha,\beta\in\mathfrak{b}\}\subseteq\Der_{*}(\mathfrak{b})$.
The subspace $D_{\mathfrak{b},\mathfrak{b}}$ is a Lie subalgebra
(and ideal) of $\Der_{*}(\mathfrak{b})$ and $D_{\mathfrak{b},\mathfrak{b}}(X)\subseteq X\text{ }$
for $X=A^{+}$, $A^{-}$, $B$, $\cdots$, $E'$. By definition, $D_{x,y}=0$
if $x\in X$ and $y\notin X'$ with $X=B,C,E$ or $x\in A^{+}$ and
$y\in A^{-}$. From condition $(\Gamma3)$ in Definition \ref{def of gamma}
get $D_{\mathfrak{b},\mathfrak{b}}=D_{A^{+},A^{+}}+D_{A^{-},A^{-}}+D_{B,B'}+D_{C,C'}+D_{E,E'}$. 
\begin{prop}
$D_{\alpha,\beta}+D_{\beta,\alpha}=0$ and $D_{\alpha\beta,\gamma}+D_{\beta\gamma,\alpha}+D_{\gamma\alpha,\beta}=0$
for all $\alpha,\beta,\gamma\in\mathfrak{b}$.
\end{prop}

\begin{proof}
From anti-commutativity of the bracket of of $\mathcal{L}(\mathfrak{b})$
and the fact that $\tra(xy)=\tra(yx)$, $\tra(uv'^{t})=\tra(v'u^{t})$,
for all $n\times n$ matrices $x$ and $y$ and $v\in V$ and $v'\in V'$,
we deduce that $D_{\alpha,\beta}=-D_{\beta,\alpha}$ for all $\alpha,\beta\in\mathfrak{b}$.
It remains to show that $D_{\alpha\beta,\gamma}+D_{\beta\gamma,\alpha}+D_{\gamma\alpha,\beta}=0$.
This can be proved by making various choices of $z_{1}\otimes\alpha$,
$z_{2}\otimes\beta$, $z_{3}\otimes\gamma\in(\mathfrak{g^{+}}\otimes A^{-})\cup(\mathfrak{g}^{-}\otimes A^{+})\cup(V\otimes B)\cup\dots\cup(\Lambda'\otimes E')$
and calculating the corresponding Jacoby identity. As illustration,
consider $\alpha=a^{-}\in A^{-}$, $\beta=b'\in B'$, $\gamma=b\in B$.
Write the Jacoby identity for $(E_{1,2}+E_{2,1})\otimes a^{-}$, $e_{2}\otimes b'$
and $e_{1}\otimes b$, then use Lemma \ref{bb'} and evaluate the
$D_{\mathfrak{b},\mathfrak{b}}$-component to get  $D_{\delta,a^{-}}+D_{b'a^{-},b}+D_{a^{-}b,b'}=0$
where $\delta=\frac{1}{2}[b,b']_{A^{-}}$. Since $\delta=bb'-\frac{1}{2}(b\circ b')_{A^{+}}$
and $D_{\frac{1}{2}(b\circ b')_{A^{+}},a^{-}}=0$, we get,  $D_{bb',a^{-}}+D_{b'a^{-},b}+D_{a^{-}b,b'}=0$,
as required.
\end{proof}
Let $I$ be the subspace of $\mathfrak{b}\otimes\mathfrak{b}$ spanned
by the elements 
\begin{alignat}{1}
 & \alpha\otimes\beta+\beta\otimes\alpha,\quad\gamma\alpha\otimes\beta+\beta\gamma\otimes\alpha+\alpha\beta\otimes\gamma,\quad x\otimes y\label{I skew}
\end{alignat}
where $\alpha,\beta,\gamma\in\mathfrak{b}$, $x\in X$ and $y\notin X'$
with $X=B,C,E$ or $x\in A^{+}$ and $y\in A^{-}$. Recall that $D_{\mathfrak{b},\mathfrak{b}}$
is a Lie subalgebra of $\Der_{*}(\mathfrak{b})$, so $\mathfrak{b}$
and $\mathfrak{b}\otimes\mathfrak{b}$ are $D_{\mathfrak{b},\mathfrak{b}}$-modules.
It is easy to see that the space $I$ is invariant under $D_{\mathfrak{b},\mathfrak{b}}$,
and so the quotient space $\left\{ \mathfrak{b},\mathfrak{b}\right\} :=\mathfrak{b}\otimes\mathfrak{b}/I$
is a $D_{\mathfrak{b},\mathfrak{b}}$-module under the induced action:
\[
D_{\alpha_{1},\alpha_{2}}.\left\{ \beta_{1},\beta_{2}\right\} :=\left\{ D_{\alpha_{1},\alpha_{2}}\beta_{1},\beta_{2}\right\} +\left\{ \beta_{1},D_{\alpha_{1},\alpha_{2}}\beta_{2}\right\} 
\]
where $\left\{ \alpha,\beta\right\} :=\alpha\otimes\beta+I$ in $\left\{ \mathfrak{b},\mathfrak{b}\right\} $.
Then the relations in (\ref{I skew}) translate to say $\{\alpha,\beta\}=-\{\beta,\alpha\}$,
$\{\gamma\alpha,\beta\}+\{\beta\gamma,\alpha\}+\{\alpha\beta,\gamma\}=0$
and $\{x,y\}=0$. The mapping $\mathfrak{b}\otimes\mathfrak{b}\rightarrow D_{\mathfrak{b},\mathfrak{b}}$,
$\alpha\otimes\beta\mapsto D_{\alpha,\beta}$ contains $I$ in the
kernel. We define the induced mapping $\rho:\left\{ \mathfrak{b},\mathfrak{b}\right\} \rightarrow D_{\mathfrak{b},\mathfrak{b}}$
by $\rho(\left\{ \alpha,\beta\right\} )=D_{\alpha,\beta}$. 
\begin{prop}
\label{braket and surjective} (1) The space $\left\{ \mathfrak{b},\mathfrak{b}\right\} $
is a Lie algebra with the multiplication $[\left\{ \alpha_{1},\alpha_{2}\right\} ,\left\{ \beta_{1},\beta_{2}\right\} ]=\left\{ D_{\alpha_{1},\alpha_{2}}\beta_{1},\beta_{2}\right\} +\left\{ \beta_{1},D_{\alpha_{1},\alpha_{2}}\beta_{2}\right\} ,$
for all $\alpha_{1},\alpha_{2},\beta_{1},\beta_{2}\in\mathfrak{b}$.

(2) The mapping $\rho:\left\{ \mathfrak{b},\mathfrak{b}\right\} \rightarrow D_{\mathfrak{b},\mathfrak{b}}$
given by $\rho(\left\{ \alpha,\beta\right\} )=D_{\alpha,\beta}$ is
a surjective Lie algebra homomorphism.
\end{prop}

\begin{proof}
This can be checked by making various choices of elements in $\mathfrak{b}$
and calculating the corresponding derivations by using Proposition
\ref{derivation rule}, see \cite[4.8-4.10]{allison2000central},
\cite[5.24]{allison2002lie} and \cite[Proposition 5.3.4]{yaseen2018generalized}. 
\end{proof}
Propositions \ref{derivation 2} and \ref{braket and surjective}
imply the following.
\begin{prop}
$\mathfrak{b}$ is a module for the Lie algebra $\left\{ \mathfrak{b},\mathfrak{b}\right\} $
with action defined by $\{\alpha,\beta\}.\gamma=\rho(\{\alpha,\beta\})\gamma=D_{\alpha,\beta}\gamma$
for $\{\alpha,\beta\}\in\left\{ \mathfrak{b},\mathfrak{b}\right\} $,
$\gamma\in\mathfrak{b}.$ This action stabilizes the subspaces $A^{+},A^{-},B,\cdots,E'$.
\end{prop}

\begin{defn}
\cite[5.26]{allison2002lie} The \emph{full skew-dihedral homology
group} of $\mathfrak{b}$ is
\[
\hfb(\mathfrak{b}):=\ker\rho=\left\{ \sum_{i}\left\{ \alpha_{i},\beta_{i}\right\} \in\left\{ \mathfrak{b},\mathfrak{b}\right\} \mid\sum_{i}D_{\alpha_{i},\beta_{i}}=0\right\} .
\]
\end{defn}

\begin{thm}
\label{universal covering} Let $n\ge5$ and let $\mathfrak{a}$ and
$\mathcal{B}$ be as in Example \ref{exa:Model x}. Let 
\[
\widehat{\mathcal{L}(\mathfrak{b})}:=(\mathfrak{g^{+}}\otimes A^{-})\oplus(\mathfrak{g}^{-}\otimes A^{+})\oplus\cdots\oplus(\Lambda'\otimes E')\oplus\left\{ \mathfrak{b},\mathfrak{b}\right\} 
\]
 be the algebra with multiplication defined by (\ref{main for}) with
$D$ replaced by $\left\{ \mathfrak{b},\mathfrak{b}\right\} $ and
$\langle\alpha,\beta\rangle$ replaced by $\left\{ \alpha,\beta\right\} $.\textup{
}Consider the map $f:\widehat{\mathcal{L}(\mathfrak{b})}\rightarrow\mathcal{L}(\mathfrak{b})$
given by $f(x)=x$ for all $x\in(\mathfrak{g}\otimes A)\oplus\cdots\oplus(\Lambda'\otimes E')$
and $f(\left\{ \alpha,\beta\right\} )=D_{\alpha,\beta}$ for all $\left\{ \alpha,\beta\right\} \in\left\{ \mathfrak{b},\mathfrak{b}\right\} $.
Then $(\widehat{\mathcal{L}(\mathfrak{b})},f)$ is the universal covering
algebra of $\mathcal{L}(\mathfrak{b})$ and the center of \textup{$\widehat{\mathcal{L}(\mathfrak{b})}$}
is $\hfb(\mathfrak{b})$.
\end{thm}

\begin{proof}
 This is similar to \cite[Theorem 4.13]{allison2000central} and
\cite[Theorem 5.34]{allison2002lie}. First, we need to check that
$\widehat{\mathcal{L}(\mathfrak{b})}$ is a Lie algebra under the
above multiplication. Note that the products in (\ref{main for})
are bilinear and antisymmetric. It remains to check \foreignlanguage{english}{$\widehat{\mathcal{L}(\mathfrak{b})}$}
satisfies the Jacobi identity. Observe that if at least $2$ of the
$3$ factors are from $(\mathfrak{g}\otimes A)\oplus\cdots\oplus(\Lambda'\otimes E')$,
then the products behave as in $\mathcal{L}(\mathfrak{b})$. The only
difference is that the $\left\{ \mathfrak{b},\mathfrak{b}\right\} $-component
of the products involves expressions such as $\{\alpha_{1},\alpha_{2}\}$
rather than $D_{\alpha_{1},\alpha_{2}}$. But when such a term acts
on $\mathfrak{b}$, the action of the two is the same. When all of
them belong to $\left\{ \mathfrak{b},\mathfrak{b}\right\} $, by Proposition
\ref{braket and surjective}, the Jacobi identity holds. When exactly
2 of the 3 factors belongs to $\left\{ \mathfrak{b},\mathfrak{b}\right\} $
then we can use the fact that the products of the form $[\{\alpha_{1},\alpha_{2}\},\{\beta_{1},\beta_{2}\}]$
are represented as $[D_{\alpha_{1},\alpha_{2}},D_{\beta_{1},\beta_{2}}]$,
see Proposition \ref{braket and surjective}. As illustration, we
consider $\{\alpha_{1},\alpha_{2}\},\{\beta_{1},\beta_{2}\}\in\left\{ \mathfrak{a},\mathfrak{a}\right\} $
and $x\otimes\alpha\in(\mathfrak{g^{+}}\otimes A^{-})\oplus(\mathfrak{g}^{-}\otimes A^{+})\oplus\cdots\oplus(\Lambda\otimes E)\oplus(\Lambda'\otimes E')$.
Using Proposition \ref{derivation rule} and the associativity of
$\mathcal{\mathfrak{a}}$ we get 
\begin{align*}
 & [[\left\{ \alpha_{1},\alpha_{2}\right\} ,\left\{ \beta_{1},\beta_{2}\right\} ],x\otimes\alpha]=[\left\{ D_{\alpha_{1},\alpha_{2}}\beta_{1},\beta_{2}\right\} +\left\{ \beta_{1},D_{\alpha,\alpha_{2}}\beta_{2}\right\} ,x\otimes\alpha]\\
 & \quad\quad=[\left\{ [[\alpha_{1},\alpha_{2}],\beta_{1}],\beta_{2}\right\} ,x\otimes\alpha]+[\left\{ \beta_{1},[[\alpha_{1},\alpha_{2}],\beta_{2}]\right\} ,x\otimes\alpha]\\
 & \quad\quad=x\otimes([[[[\alpha_{1},\alpha_{2}],\beta_{1}],\beta_{2}],\alpha]+[[\beta_{1},[[\alpha_{1},\alpha_{2}],\beta_{2}]],\alpha])=x\otimes[[[\alpha_{1},\alpha_{2}],[\beta_{1}\beta_{2}]],\alpha]\\
 & \quad\quad=x\otimes([[\alpha_{1},\alpha_{2}],[[\beta_{1},\beta_{2}],\alpha]]+[[[\alpha_{1},\alpha_{2}],\alpha],[\beta_{1},\beta_{2}]])\\
 & \quad\quad=[\left\{ \alpha_{1},\alpha_{2}\right\} ,x\otimes[[\beta_{1},\beta_{2}],\alpha]]+[x\otimes[[\alpha_{1},\alpha_{2}],\alpha],\left\{ \beta_{1},\beta_{2}\right\} ]\\
 & \quad\quad=[\left\{ \alpha_{1},\alpha_{2}\right\} ,[\left\{ \beta_{1},\beta_{2}\right\} ,x\otimes\alpha]+[[\left\{ \alpha_{1},\alpha_{2}\right\} ,x\otimes\alpha],\left\{ \beta_{1},\beta_{2}\right\} ]
\end{align*}
Therefore $\widehat{\mathcal{L}(\mathfrak{b})}$ with the above multiplication
is a Lie algebra. By its construction, $\widehat{\mathcal{L}(\mathfrak{b})}$
is graded by the same root system as $\mathcal{L}(\mathfrak{b})$
and it is perfect. By Proposition \ref{braket and surjective}, $f$
is a surjective Lie algebra homomorphism and $\ker f=\left\{ \sum_{i}\left\{ \alpha_{i},\beta_{i}\right\} \in\left\{ \mathfrak{b},\mathfrak{b}\right\} \mid\sum_{i}D_{\alpha_{i},\beta_{i}}=0\right\} .$
Thus, $(\widehat{\mathcal{L}(\mathfrak{b})},f)$ is a central extension
of $L$. We have $\ker f\subseteq Z(\widehat{\mathcal{L}(\mathfrak{b})})$
and it easy to check that $Z(\widehat{\mathcal{L}(\mathfrak{b})})\subseteq\ker f$,
so $Z(\widehat{\mathcal{L}(\mathfrak{b})})=\ker f=\hfb(\mathfrak{b}),$
as required.

To see that $f:\widehat{\mathcal{L}(\mathfrak{b})}\rightarrow\mathcal{L}(\mathfrak{b})$
is universal, suppose that $f:\widetilde{\mathcal{L}(\mathfrak{b})}\rightarrow\mathcal{L}(\mathfrak{b})$
is a central extension of $L$. By Lemma \ref{lifting 1 }, we can
lift $\mathcal{L}(\mathfrak{b})$ to a subspace of $\widetilde{\mathcal{L}(\mathfrak{b})}$,
which we identify with $\mathcal{L}(\mathfrak{b})$, so that the corresponding
2-cocycle satisfies $\zeta(\mathfrak{g},\mathcal{L}(\mathfrak{b}))=0$.
Then, by Theorem \ref{lift 2}, we may assume that the corresponding
2-cocycle is obtained from a 2-cocycle $\epsilon$ of $\mathfrak{b}$
as in (\ref{eq:lift2 equation}). The 2-cocycle $\epsilon$ induces
a mapping $\tilde{\epsilon}:\left\{ \mathfrak{b},\mathfrak{b}\right\} \rightarrow\mathbb{E}$
with $\left\{ \alpha,\beta\right\} \mapsto\epsilon(\alpha,\beta)\in\mathbb{E}$.
Thus, there is a homomorphism $\varphi:\widehat{\mathcal{L}(\mathfrak{b})}\rightarrow\widetilde{\mathcal{L}(\mathfrak{b})}$
with $\varphi(x)=x$ for all $x\in(\mathfrak{g}\otimes A)\oplus\cdots\oplus(\Lambda'\otimes E')$
and $\varphi(\left\{ \alpha,\beta\right\} )=D_{\alpha,\beta}+\tilde{\epsilon}(\alpha,\beta)$
for all $\left\{ \alpha,\beta\right\} \in\left\{ \mathfrak{b},\mathfrak{b}\right\} $.
Hence $\widehat{\mathcal{L}(\mathfrak{b})}$ is the universal covering
algebra of $\mathcal{L}(\mathfrak{b})$, as required.
\end{proof}
Let $X$ be a subspace of $\hfb(\mathfrak{b})=Z(\widehat{\mathcal{L}(\mathfrak{b})})$.
Consider the quotient space $\prec\mathfrak{b},\mathfrak{b}\succ=\left\{ \mathfrak{b},\mathfrak{b}\right\} /X$
and set $\prec\alpha,\beta\succ=\left\{ \alpha,\beta\right\} +X$
in $\left\{ \mathfrak{b},\mathfrak{b}\right\} /X$. Let 
\begin{equation}
\mathcal{L}(\mathfrak{b},X):=(\mathfrak{g}\otimes A)\oplus\cdots\oplus(\Lambda'\otimes E')\oplus\prec\mathfrak{b},\mathfrak{b}\succ\label{eq:X2}
\end{equation}
 be the algebra with multiplication same as $\mathcal{L}(\mathfrak{b})$
with $D_{\alpha,\beta}$ replaced by $\prec\alpha,\beta\succ$. Then
we have the following. 
\begin{thm}
\label{main universal}(1) $\mathcal{L}(\mathfrak{b},X)$ is a $(\Theta_{n},\mathfrak{g})$-graded
Lie algebra with coordinate algebra $\mathfrak{b}$.

(2) Every $\Theta_{n}$-graded Lie algebra with coordinate algebra
$\mathfrak{b}$ is isomorphic to $\mathcal{L}(\mathfrak{b},X)$ for
some subspace $X$ of $\hfb(\mathfrak{b})$.
\end{thm}

\begin{proof}
This is proved by using the same arguments as in \cite[Theorem 4.20]{allison2000central}
and \cite[Theorem 5.35]{allison2002lie}. 
\end{proof}
\global\long\def\bibname{References}%

\bibliographystyle{abbrv}
\bibliography{references}

\begin{thebibliography}{10}

\bibitem{allison1978class}
B.~Allison.
\newblock A class of nonassociative algebras with involution containing the
  class of jordan algebras.
\newblock {\em Mathematische Annalen}, 237(2):133--156, 1978.

\bibitem{allison2008unitary}
B.~Allison and G.~Benkart.
\newblock Unitary {L}ie algebras and {L}ie tori of type {BC}$_r$, $r\geq 3$.
\newblock In {\em Contemp. Math}, volume 506, pages 1--47, 2008.

\bibitem{allison2000central}
B.~Allison, G.~Benkart, and Y.~Gao.
\newblock Central extensions of {L}ie algebras graded by finite root systems.
\newblock {\em Mathematische Annalen}, 316(3):499--527, 2000.

\bibitem{allison2002lie}
B.~Allison, G.~Benkart, and Y.~Gao.
\newblock {\em {L}ie Algebras graded by the root systems {BC}$_r$, $r\geq 2$}.
\newblock Number 751. American Mathematical Soc., 2002.

\bibitem{allison2003structurable}
B.~Allison and Y.~Yoshii.
\newblock Structurable tori and extended affine lie algebras of type {BC}$_1$.
\newblock {\em Journal of Pure and Applied Algebra}, 184(2-3):105--138, 2003.

\bibitem{bahturin2004simple}
Y.~Bahturin, A.~Baranov, and A.~Zalesski.
\newblock Simple {L}ie subalgebras of locally finite associative algebras.
\newblock {\em Journal of Algebra}, 281(1):225--246, 2004.

\bibitem{bahturin2004some}
Y.~Bahturin and G.~Benkart.
\newblock Some constructions in the theory of locally finite simple {L}ie
  algebras.
\newblock {\em J. {L}ie Theory}, 14(1):243--270, 2004.

\bibitem{baranov2001plain}
A.~Baranov and A.~Zalesskii.
\newblock Plain representations of {L}ie algebras.
\newblock {\em Journal of the London Mathematical Society}, 63(3):571--591,
  2001.

\bibitem{baranov2001quasiclassical}
A.~Baranov and A.~Zalesskii.
\newblock Quasiclassical {L}ie algebras.
\newblock {\em Journal of Algebra}, 243(1):264--293, 2001.

\bibitem{benkart1990stability}
G.~Benkart, D.~Britten, and F.~Lemire.
\newblock {\em Stability in modules for classical {L}ie algebras: a
  constructive approach}, volume 430.
\newblock American Mathematical Soc., 1990.

\bibitem{benkart2002lie}
G.~Benkart and A.~Elduque.
\newblock {L}ie superalgebras graded by the root systems {C}(n), {D}(m,n),
  {D}(2,1,$\alpha$), {F}(4), {G}(3).
\newblock {\em Canad. Math. Bull}, 45(4):509--524, 2002.

\bibitem{benkart2003lieA}
G.~Benkart and A.~Elduque.
\newblock {L}ie superalgebras graded by the root system {A}(m,n).
\newblock {\em J. {L}ie Theory}, 13(2):387--400, 2003.

\bibitem{benkart2003lieB}
G.~Benkart and A.~Elduque.
\newblock {L}ie superalgebras graded by the root system {B}(m,n).
\newblock {\em Selecta Mathematica, New Series}, 9(3):313--360, 2003.

\bibitem{benkart2012lie}
G.~Benkart and A.~Elduque.
\newblock {L}ie algebras with prescribed $sl_3$ decomposition.
\newblock {\em Proceedings of the American Mathematical Society},
  140(8):2627--2638, 2012.

\bibitem{benkart2004n}
G.~Benkart, A.~Elduque, and C.~Mart{\'\i}nez.
\newblock A(n,n)-graded {L}ie superalgebras.
\newblock {\em Journal f{\"u}r die reine und angewandte Mathematik},
  2004(573):139--156, 2004.

\bibitem{benkart2003lie}
G.~Benkart and O.~Smirnov.
\newblock {L}ie algebras graded by the root system {BC}$_1$.
\newblock {\em J. {L}ie theory}, 13(1):91--132, 2003.

\bibitem{benkart1996lie}
G.~Benkart and E.~Zelmanov.
\newblock {L}ie algebras graded by finite root systems and intersection matrix
  algebras.
\newblock {\em Inventiones mathematicae}, 126(1):1--45, 1996.

\bibitem{berman1992lie}
S.~Berman and R.~Moody.
\newblock {L}ie algebras graded by finite root systems and the intersection
  matrix algebras of {S}lodowy.
\newblock {\em Inventiones mathematicae}, 108(1):323--347, 1992.

\bibitem{draper2016overview}
C.~Draper and A.~Elduque.
\newblock An overview of fine gradings on simple {L}ie algebras.
\newblock {\em Note Mat}, 36(1):15--34, 2016.

\bibitem{elduque2013fine}
A.~Elduque.
\newblock Fine gradings and gradings by root systems on simple {L}ie algebras.
\newblock {\em Revista Matem{\'a}tica Iberoamericana}, 31(1):245--266, 2015.

\bibitem{kumar2010tensor}
S.~Kumar.
\newblock Tensor product decomposition.
\newblock In {\em Proceedings of the International Congress of Mathematicians
  2010 (ICM 2010) Vol. I}, pages 1226--1261. World Scientific, 2010.

\bibitem{martinez2003lie}
C.~Mart{\'\i}nez and E.~Zelmanov.
\newblock {L}ie superalgebras graded by {P}(n) and {Q}(n).
\newblock {\em Proceedings of the National Academy of Sciences of the United
  States of America}, 100(14):8130, 2003.

\bibitem{neeb2002locally}
K.-H. Neeb.
\newblock {\em Locally convex root graded {L}ie algebras}.
\newblock Fachbereich Mathematik, Technische Universit{\"a}t Darmstadt, 2002.

\bibitem{neher1996lie}
E.~Neher.
\newblock {L}ie algebras graded by 3-graded root systems and jordan pairs
  covered by grids.
\newblock {\em American Journal of Mathematics}, pages 439--491, 1996.

\bibitem{seligman1976rational}
G.~Seligman.
\newblock {\em Rational methods in {L}ie algebras}, volume~17.
\newblock Marcel Dekker Inc, 1976.

\bibitem{weibel1995introduction}
C.~Weibel.
\newblock {\em An introduction to homological algebra}.
\newblock Number~38. Cambridge university press, 1995.

\bibitem{yaseen2018generalized}
H.~M. Yaseen.
\newblock {\em Generalized root graded Lie algebras}.
\newblock PhD thesis, Department of Mathematics; University of Leicester, 2018.

\bibitem{yousofzadeh2015root}
M.~Yousofzadeh.
\newblock Root graded {L}ie superalgebras.
\newblock {\em Journal of Lie Theory}, 26(3):731--765, 2016.

\end{thebibliography}

\appendix
\printindex{}

\end{document}